\newcommand{\mlabel}[2]{\def\@currentlabel{#2}\label{#1}} % usa-se \mlabel{key}{X}. Entao \ref{key} vai mostrar X.
\newtheorem{theorem}{Theorem}[section]
\newtheorem*{theorem*}{Theorem}
\newtheorem{lemma}[theorem]{Lemma}
\newtheorem{corollary}[theorem]{Corollary}
\newtheorem{proposition}[theorem]{Proposition}
\newtheorem{definition}[theorem]{Definition}
\newtheorem{example}[theorem]{Example}
\newtheorem{remark}[theorem]{Remark}
\newtheorem{claim}{Claim}[theorem]
\newtheorem{exercise}{Exercise}[section]
\newcommand{\C}{\mathbb C}
\newcommand{\R}{\mathbb R}
\newcommand{\Z}{\mathbb Z}
\newcommand{\N}{\mathbb N}
\begin{document}
\title{Bi-Lipschitz invariance of the multiplicity}

\author[A. Fernandes]{Alexandre Fernandes}
\author[J. E. Sampaio]{Jos\'e Edson Sampaio}

\address{Alexandre Fernandes: Departamento de Matem\'atica, Universidade Federal do Cear\'a, Av. Humberto Monte, s/n Campus do Pici - Bloco 914, 60455-760, Fortaleza-CE, Brazil. E-mail: {\tt alex@mat.ufc.br}}
\address{J. Edson Sampaio: Departamento de Matem\'atica, Universidade Federal do Cear\'a, Rua Campus do Pici, s/n, Bloco 914, Pici, 60440-900, Fortaleza-CE, Brazil. E-mail: {\tt edsonsampaio@mat.ufc.br}}

\thanks{The first named author was partially supported by CNPq-Brazil grant 304700/2021-5. The second named author was supported by the Serrapilheira Institute (grant number Serra -- R-2110-39576) and was partially supported by CNPq-Brazil grant 310438/2021-7.}

% \pagestyle{fancyplain}
% \fancyhf{}
% 
% \cfoot{}
% \cfoot{\thepage}

%% \tableofcontents %% Just for papers exceeding 50 pages.
% \maketitle
% \thispagestyle{empty}
% \newpage\thispagestyle{empty}\ \newpage
% \pagenumbering{roman}
% % \frontmatter
% 
% \setcounter{page}{1}
% %\input{preface}
% 
% \tableofcontents
% % \thispagestyle{empty}
% % \newpage\thispagestyle{empty}\ 
% \newpage
% 
% 
% \pagestyle{fancy}                       % Sets fancy header and footer
% 
% \chapter{Bi-Lipschitz invariance of the multiplicity}\label{chapter:Multiplicity}
% 
% \chapterauthor{Alexandre Fernandes\footnote{Partially supported by CNPq-Brazil grant 304700/2021-5.} and Jos\'e Edson Sampaio\footnote{Partially supported by CNPq-Brazil grant 310438/2021-7.}}

\maketitle

\begin{abstract}
The multiplicity of an algebraic curve $C$ in the complex plane at a point $p$ on that curve is defined as the number of points that occur at the intersection of $C$ with a general complex line that passes close to the point $p$. It is shown that $p$ is a singular point of the curve $C$ if and only if this multiplicity is greater than or equal to 2, in this sense, such an integer number can be considered as a measure of how singular can be a point of the curve $C$. In these notes, we address the classical concept of multiplicity of singular points of complex algebraic sets (not necessarily complex curves) and we approach the nature of the multiplicity of singular points as a geometric invariant from the perspective of the Multiplicity Conjecture (Zariski 1971). More precisely, we bring a discussion on the recent results obtained jointly with Lev Birbrair, Javier Fern\'andez de Bobadilla, L\^e Dung Trang and Mikhail Verbitsky on the bi-Lipschitz invariance of the multiplicity.
\end{abstract}

\tableofcontents
\newpage

\section{Introduction}

Unless explicitly mentioned to the contrary, all the analytic subsets of $\C^n$ considered here are closed subsets of $\C^n$.

\subsection{Local Analytic Structure} Let $p\in X\subset\C^n$ and $q\in Y\subset\C^m$ be analytic subsets. We say that the pair $(X,p)$ is {\bf analytic equivalent}\index{equivalent!analytic} to $(Y,q)$ if there exist neighbourhoods $U\subset\C^n$ of $p$ and $V\subset\C^m$ of $q$ and an analytic mapping $F\colon U\cap X\rightarrow V\cap Y$; $F(p)=q$ with inverse map $G\colon V\cap Y\rightarrow U\cap X$ also analytic. This definition gives us an equivalence relation; each equivalence class is what we call a {\bf local analytic structure}\index{structure!local analytic}. We establish that the equivalence class of $(\C^n,0)$ is the {\bf regular local analytic structure}\index{structure!regular local analytic} in dimension $n$. Local Analytic Geometry is the research field in Mathematics in charge of describing all local analytic structures.

Given $p\in X\subset\C^n$ an analytic subset, we denote by $\mathcal{O}_{X,p}$  the set of analytic functions defined in some neighbourhood of $p$ in $X$ equipped with natural binary operations of addition and multiplication. Defined in that way, $\mathcal{O}_{X,p}$ is a local ring with maximal ideal given by
$$\mathcal{M}_{X,p}=\{f\in \mathcal{O}_{X,p} \ : \ f(p)=0\}.$$

Next result makes a bridge connecting Local Analytic Geometry with Commutative Algebra (see \cite{Chirka:1989}).

\begin{theorem}
Let $p\in X\subset\C^n$ and $q\in Y\subset\C^m$ be analytic subsets. The pair $(X,p)$ defines the same local analytic structure\index{structure!local analytic} as $(Y,q)$ if, and only if, $\mathcal{O}_{X,p}$ is isomorphic to $\mathcal{O}_{Y,q}$ as local $\C$-algebras.
\end{theorem}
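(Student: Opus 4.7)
The $(\Rightarrow)$ direction is formal: given an analytic equivalence $F\colon(X,p)\to(Y,q)$ with inverse $G$, the pullback $F^{*}\colon\mathcal{O}_{Y,q}\to\mathcal{O}_{X,p}$, $g\mapsto g\circ F$, is a $\C$-algebra homomorphism, local because $F(p)=q$, and $G^{*}$ is a two-sided inverse.

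For $(\Leftarrow)$, my plan is to reconstruct an analytic map germ from a local $\C$-algebra isomorphism $\varphi\colon\mathcal{O}_{Y,q}\to\mathcal{O}_{X,p}$. After translating I may assume $p=0\in\C^{n}$ and $q=0\in\C^{m}$. Let $\bar y_1,\dots,\bar y_m$ denote the restrictions to $Y$ of the coordinate functions of $\C^{m}$; these lie in $\mathcal{M}_{Y,q}$, so by locality $f_i:=\varphi(\bar y_i)\in\mathcal{M}_{X,p}$. I will pick analytic representatives of the $f_i$ on a common neighborhood $U$ of $p$ and set $F=(f_1,\dots,f_m)\colon U\cap X\to\C^{m}$; the same recipe applied to $\varphi^{-1}$ will produce a candidate inverse $G\colon V\cap Y\to\C^{n}$.

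The central step is to verify $F(U\cap X)\subset Y$ near $q$. If $Y$ is cut out near $q$ by an ideal $I\subset\C\{y_1,\dots,y_m\}$, I need $g\circ F=g(f_1,\dots,f_m)=0$ as a germ at $p$ for every $g\in I$. Since each $f_i\in\mathcal{M}_{X,p}$, substitution into convergent power series is legal and yields a $\C$-algebra homomorphism $\mathrm{sub}\colon\C\{y\}\to\mathcal{O}_{X,p}$ with $\mathrm{sub}(y_i)=f_i$. I then identify $\mathrm{sub}$ with $\varphi\circ\pi$, where $\pi\colon\C\{y\}\to\mathcal{O}_{Y,q}$ is the quotient. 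Both are $\C$-algebra maps that agree on $y_1,\dots,y_m$, hence on $\C[y_1,\dots,y_m]$; both are continuous in the $\mathcal{M}$-adic topology ($\mathrm{sub}$ by the analytic substitution theorem, $\varphi\circ\pi$ because locality forces $\varphi(\mathcal{M}_{Y,q}^{k})\subset\mathcal{M}_{X,p}^{k}$); polynomials are $\mathcal{M}$-adically dense in $\C\{y\}$, and $\mathcal{O}_{X,p}$ is Hausdorff by Krull's intersection theorem. Together these give $\mathrm{sub}=\varphi\circ\pi$, so for $g\in I$ one obtains $g\circ F=\varphi(\pi(g))=0$, and $F$ descends to a germ $(X,p)\to(Y,q)$ whose pullback on local rings is $\varphi$. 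Running the same argument for $G$, and using that a map germ into affine space is determined by the pullbacks of the ambient coordinate functions, the identities $(G\circ F)^{*}=\varphi\circ\varphi^{-1}=\mathrm{id}$ and $(F\circ G)^{*}=\mathrm{id}$ force $F$ and $G$ to be mutually inverse analytic equivalences.

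The hard part is exactly this continuity/density argument. \emph{A priori} $\varphi$ is only required to respect ring operations, so it only sees polynomial relations among the $\bar y_i$; the real work is to upgrade this to respect of all convergent relations, thereby promoting an abstract algebraic isomorphism to a genuine analytic equivalence of germs. Everything else is packaging.
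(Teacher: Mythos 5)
Your argument is correct, and it is essentially the standard proof of this classical fact; note that the paper itself offers no proof here, stating the theorem with a reference to Chirka, so there is nothing internal to compare against. The forward direction is indeed formal, and for the converse your key step --- identifying the substitution homomorphism $\mathrm{sub}\colon\C\{y\}\to\mathcal{O}_{X,p}$ with $\varphi\circ\pi$ via $\mathcal{M}$-adic continuity of both maps, density of polynomials, and Hausdorffness of $\mathcal{O}_{X,p}$ from Krull's intersection theorem --- is exactly the right way to upgrade the purely algebraic datum $\varphi$ to respect all convergent relations, after which $g\circ F=\varphi(\pi(g))=0$ for $g\in\mathcal{I}(Y)$ forces $F$ to land in $Y$ (using that finitely many generators of $\mathcal{I}(Y)$ cut out $Y$ on a small enough neighbourhood of $q$), and $(G\circ F)^{*}=\mathrm{id}$ forces $G\circ F=\mathrm{id}$ since a germ of a map into affine space is determined by the pullbacks of the coordinate functions.
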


As already mentioned here, we observe that this theorem above provides a way to study the classification of local analytic structures from the algebraic point of view. Next, from the algebraic point of view, we present one of the more important invariant of local analytic structures.

\begin{proposition}
Let $X\subset\C^n$ be an analytic subset such that in a neighbourhood of $p\in X$ it has pure dimension $d$. There exists a polynomial $P(t) \in \C [t]$ of degree $d$ such that $P(k)$ is  equal to $\dim \mathcal{O}_{X,p}/\mathcal{M}^k_{X,p}$ as a $\C$ vector space, for $k$ large enough. Moreover, the leading coefficient of $P(t)$ times $d!$ is equal to some positive integer $e$.
\end{proposition}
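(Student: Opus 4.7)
The plan is to identify $P(t)$ as the Hilbert--Samuel polynomial of the local ring $\mathcal{O}_{X,p}$ with respect to its maximal ideal $\mathfrak{m}:=\mathcal{M}_{X,p}$, and then to read off the degree and leading coefficient from the structure of the associated graded algebra. First I would recall that $\mathcal{O}_{X,p}$ is a Noetherian local $\mathbb{C}$-algebra (Rückert basis theorem), so each quotient $\mathcal{O}_{X,p}/\mathfrak{m}^{k}$ is a finite dimensional $\mathbb{C}$-vector space. Writing $H(k):=\dim_{\mathbb{C}} \mathfrak{m}^{k}/\mathfrak{m}^{k+1}$, the telescoping identity
$$\dim_{\mathbb{C}}\mathcal{O}_{X,p}/\mathfrak{m}^{k+1}=\sum_{j=0}^{k}H(j)$$
reduces the problem to showing that $H(k)$ agrees, for $k\gg 0$, with a polynomial of degree $d-1$ whose leading coefficient is positive and rational with controlled denominator.

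To handle $H(k)$, I would pass to the associated graded ring $\mathrm{gr}_{\mathfrak{m}}\mathcal{O}_{X,p}=\bigoplus_{k\geq 0}\mathfrak{m}^{k}/\mathfrak{m}^{k+1}$. Choosing a minimal system of generators $\bar{x}_{1},\dots,\bar{x}_{r}$ of $\mathfrak{m}$ (where $r$ is the embedding dimension of $(X,p)$) exhibits $\mathrm{gr}_{\mathfrak{m}}\mathcal{O}_{X,p}$ as a finitely generated graded quotient of the polynomial algebra $\mathbb{C}[y_{1},\dots,y_{r}]$, with $H(k)$ equal to its Hilbert function in degree $k$. The classical theorem of Hilbert on graded modules over a polynomial ring then gives, for $k\gg 0$, a polynomial $h(t)\in\mathbb{Q}[t]$ with $H(k)=h(k)$; summing, the function $k\mapsto \dim_{\mathbb{C}}\mathcal{O}_{X,p}/\mathfrak{m}^{k}$ coincides for $k\gg 0$ with a polynomial $P(t)\in\mathbb{Q}[t]$ of degree one more than $\deg h$.

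It remains to pin down $\deg P = d$ and the integrality claim. For the degree, I would invoke the standard commutative-algebra fact that the degree of the Hilbert--Samuel polynomial equals the Krull dimension of the local ring, and then use the analytic Nullstellensatz together with the Noether normalization for $\mathcal{O}_{X,p}$ to identify the Krull dimension with the pure analytic dimension $d$ of the germ $(X,p)$. The leading coefficient issue is easy: since $P$ takes integer values on all sufficiently large integers, a finite-difference argument forces $d!$ times its leading coefficient to be an integer, and positivity is automatic because the values $\dim_{\mathbb{C}}\mathcal{O}_{X,p}/\mathfrak{m}^{k}$ are strictly increasing (as $\mathfrak{m}^{k}\supsetneq\mathfrak{m}^{k+1}$ by Krull's intersection theorem applied in $\mathcal{O}_{X,p}$). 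The main technical hurdle in this plan is the degree equals dimension statement: it requires either developing the dimension theory of Noetherian local rings (systems of parameters, the equality $\dim R=\deg P$) or importing it as a black box, and then matching Krull dimension with the analytic-geometric dimension $d$ via a Noether-normalization style finite projection of the germ $(X,p)$ onto $(\mathbb{C}^{d},0)$.
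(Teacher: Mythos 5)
Your proposal is correct, and it is essentially the canonical argument. Note that the paper itself offers no proof of this proposition; it is stated as classical background (the existence of the Hilbert--Samuel polynomial, as in Chirka or de Jong--Pfister), so there is no in-paper argument to compare against. Your route --- Noetherianity of $\mathcal{O}_{X,p}$, the telescoping reduction to the Hilbert function of $\mathrm{gr}_{\mathcal{M}_{X,p}}\mathcal{O}_{X,p}$ as a graded quotient of a polynomial ring, Hilbert's theorem, the dimension theorem $\deg P=\dim \mathcal{O}_{X,p}$, the identification of Krull dimension with the analytic dimension $d$ via a Noether-normalization finite projection, and the finite-difference argument for integrality and positivity of $d!\cdot(\text{leading coefficient})$ --- is exactly how this is proved in the standard references. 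The two black boxes you flag (the dimension theorem for Noetherian local rings and the equality of Krull and analytic dimension) are genuinely where the substance lies, but importing them is entirely appropriate for a statement of this kind; your handling of the remaining points (integrality via integer values at large integers, positivity via the strict inclusions $\mathcal{M}_{X,p}^{k}\supsetneq\mathcal{M}_{X,p}^{k+1}$ coming from Krull intersection plus Nakayama) is sound, including the degenerate case $d=0$ where $P$ is the positive constant $\dim_{\C}\mathcal{O}_{X,p}$.
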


\begin{definition}
The polynomial $P(t)$ is called {\bf the Hilbert-Samuel Polynomial}\index{polynomial!Hilbert-Samuel} of the pair $(X,p)$. The integer number $e$ provided in the above proposition is called {\bf multiplicity of $X$ at $p$}\index{multiplicity} and denoted by $m(X,p)$.
\end{definition}

\begin{example}
Let us show that $m(\C^n,0)=1$. Actually, we know that $\mathcal{O}_{\C^n,0}$ is isomorphic to $\C\{z_1,\dots ,z_n\}$, so, it has pure dimension $n$. Hence, $\mathcal{M}^k_{\C^n,0}$ is the ideal of $\C\{z_1,\dots ,z_n\}$ generated by $z_1^{a_1}\cdots z_n^{a_n}$ where $a_1,\dots ,a_n$ are non-negative integer numbers such that $a_1+\cdots +a_n =k$ and, therefore,
\begin{eqnarray*}
	\dim \mathcal{O}_{\C^n,0}/\mathcal{M}^k_{\C^n,0} &=& \frac{1}{n!}(n+k-1)(n+k-2)\cdots (k+1)(k)
\end{eqnarray*}
$\therefore$ $m(\C^n,0)=1$.
\end{example}

\begin{example}
Let $X\subset\C^2$ be the cusp defined by
$$X=\{(z_1,z_2)\in\C^2 \ : \ z_1^3=z_2^2\}.$$ In this case, we see that $m(X,0)=2$. Indeed, $\mathcal{O}_{X,0}$ is isomorphic to $\C\{z_1,z_2\}/I$ where $I$ is the ideal of $\C\{z_1,z_2\}$ generated by $z_1,z_2$ with the following relation $z_1^3=z_2^2$. In other words, we have that $\mathcal{O}_{X,0}$ is isomorphic to
$$f(z_1)+g(z_1)z_2 \ : \ f(z_1),g(z_1)\in\C\{z_1\}.$$ Hence, $$\dim \mathcal{O}_{X,0}/\mathcal{M}_{X,0}^k = 2k-1,$$ and as $(X, 0)$ has pure dimension
1, we get $m(X,0)=2$.
\end{example}

Next result says that multiplicity of points is an invariant of the local analytic structure\index{structure!local analytic}.

\begin{theorem}
Let $p\in X\subset\C^n$ and $q\in Y\subset\C^m$ be analytic subsets. If $(X,p)$ defines the same local analytic structure as $(Y,q)$, then $m(X,p)=m(Y,q)$.
\end{theorem}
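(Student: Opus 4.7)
The plan is to reduce the statement to a purely algebraic fact about the Hilbert--Samuel polynomial and then invoke the bridge theorem stated earlier in the excerpt. Since the previous theorem tells us that $(X,p)$ and $(Y,q)$ define the same local analytic structure precisely when $\mathcal{O}_{X,p}\cong\mathcal{O}_{Y,q}$ as local $\mathbb{C}$-algebras, the hypothesis furnishes an isomorphism $\varphi\colon\mathcal{O}_{X,p}\to\mathcal{O}_{Y,q}$ of local $\mathbb{C}$-algebras, and everything I need should follow from the formal properties of $\varphi$.

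First I would observe that any isomorphism of local rings carries the (unique) maximal ideal to the maximal ideal, so $\varphi(\mathcal{M}_{X,p})=\mathcal{M}_{Y,q}$. Since $\varphi$ is a ring homomorphism it then respects powers, giving $\varphi(\mathcal{M}_{X,p}^k)=\mathcal{M}_{Y,q}^k$ for every positive integer $k$. Being $\mathbb{C}$-linear, $\varphi$ therefore descends to a $\mathbb{C}$-vector space isomorphism
\[
\mathcal{O}_{X,p}/\mathcal{M}_{X,p}^k \;\cong\; \mathcal{O}_{Y,q}/\mathcal{M}_{Y,q}^k
\]
for each $k$. In particular, the two sides have the same $\mathbb{C}$-dimension for all $k$.

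Next I would invoke the proposition on the Hilbert--Samuel polynomial: the function $k\mapsto \dim_{\mathbb{C}}\mathcal{O}_{X,p}/\mathcal{M}_{X,p}^k$ agrees with a polynomial $P_X(t)$ for large $k$, and similarly for $Y$ with $P_Y(t)$. The displayed equality of dimensions forces $P_X(k)=P_Y(k)$ for all sufficiently large $k$, hence $P_X=P_Y$ as polynomials. To conclude that both have the same degree $d$, one notes that the pure dimension near $p$ (resp. near $q$) equals the degree of the Hilbert--Samuel polynomial, so it is automatically preserved by the isomorphism and does not need to be assumed separately. Comparing leading coefficients and multiplying by $d!$ yields $m(X,p)=m(Y,q)$, as required.

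The only subtle point I anticipate is checking that the dimensions of $X$ at $p$ and of $Y$ at $q$ really must coincide in order to compare the two multiplicities on the same footing; this is precisely why it is convenient to read off the dimension from the degree of $P_X=P_Y$ rather than attempting to invoke it as independent input. Everything else is a direct consequence of the fact that a local $\mathbb{C}$-algebra isomorphism preserves the maximal ideal and its powers, so the proof reduces essentially to the remark that the Hilbert--Samuel polynomial is an invariant of the isomorphism class of the local ring.
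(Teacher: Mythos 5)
Your proof is correct and follows essentially the same route as the paper: pass via the bridge theorem to a local $\C$-algebra isomorphism, deduce equality of the dimensions $\dim\mathcal{O}_{X,p}/\mathcal{M}_{X,p}^k$ for all $k$, and conclude that the Hilbert--Samuel polynomials coincide, hence so do the multiplicities. Your additional remarks (that the isomorphism preserves powers of the maximal ideal, and that the common dimension can be read off from the degree of the shared polynomial) are just careful elaborations of steps the paper leaves implicit.
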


\begin{proof}
Let us assume that $(X,p)$ defines the same local analytic structure\index{structure!local analytical} as $(Y,q)$, hence $\mathcal{O}_{X,p}$ is isomorphic to $\mathcal{O}_{Y,q}$ as local $\C$-algebra. Then, $$\dim \mathcal{O}_{X,p}/\mathcal{M}^k_{X,p}=\dim \mathcal{O}_{Y,q}/\mathcal{M}^k_{Y,q} \ \forall k,$$ and $(X,p)$ and $(Y,q)$ have the same Hilbert-Samuel Polynomial. In particular, $m(X,p)=m(Y,q)$.
\end{proof}

Next we see a more geometric way to get the multiplicity of points (see \cite{JongP:2000} and \cite{Chirka:1989}).

\begin{proposition}
Let $X\subset\C^n$ be an analytic subset such that in a neighbourhood of $p\in X$ it has dimension $d$. If $L\colon\C^n\rightarrow\C^d$ is a generic linear projection,  then its restriction to $X\cap U$  defines a finite mapping of topological degree $m$ for each small enough neighbourhood $U\subset\C^n$ of the point $p$. Moreover, if $X\subset\C^n$ is an analytic subset such that in a neighbourhood of $p\in X$ it has pure dimension $d$ then the integer number $m$ is equal to the multiplicity $m(X,p)$. 
\end{proposition}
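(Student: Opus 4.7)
The plan is to combine an analytic local parameterization of $X$ near $p$ with the associativity formula for Hilbert--Samuel multiplicities. First, I would invoke the analytic Noether normalization theorem: for a generic linear projection $L\colon\C^n\to\C^d$, the kernel $\ker L$ meets the tangent cone $C_pX$ only at the origin, and this transversality guarantees that, for suitably chosen polydiscs $U\subset\C^n$ around $p$ and $V\subset\C^d$ around $L(p)$, the restriction $\pi:=L|_{X\cap U}\colon X\cap U\to V$ is proper with finite fibers. Therefore $\pi$ is an analytic branched covering of $V$, and its generic fiber has a well-defined cardinality $m$, the topological degree of $\pi$. This yields the first assertion of the proposition.

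Next I would translate the geometry into algebra. Finiteness of $\pi$ means that $\mathcal{O}_{X,p}$ is a finite module over $A:=\mathcal{O}_{\C^d,0}$; since $A$ is a regular (hence integral) local ring, the generic rank of $\mathcal{O}_{X,p}$ over $A$ coincides with the generic fiber cardinality $m$. Assume now that $(X,p)$ has pure dimension $d$ and let $\mathfrak{n}\subset A$ be the maximal ideal. The fact that $\pi^{-1}(0)=\{p\}$ set-theoretically near $p$ forces the ideal $\mathfrak{q}:=\mathfrak{n}\,\mathcal{O}_{X,p}$ to be $\mathcal{M}_{X,p}$-primary. Applying the associativity formula for multiplicities to the finite $A$-module $\mathcal{O}_{X,p}$, and using that $e(\mathfrak{n},A)=1$ because $A$ is regular of dimension $d$, I obtain
\[
e(\mathfrak{q},\mathcal{O}_{X,p}) \;=\; \mathrm{rank}_A\,\mathcal{O}_{X,p} \;=\; m.
\]

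The remaining, and most delicate, step is to show that for generic $L$ the ideal $\mathfrak{q}$ is in fact a reduction of $\mathcal{M}_{X,p}$, so that $e(\mathfrak{q},\mathcal{O}_{X,p})=e(\mathcal{M}_{X,p},\mathcal{O}_{X,p})=m(X,p)$. I would argue this after passing to the associated graded ring $\mathrm{gr}_{\mathcal{M}_{X,p}}\mathcal{O}_{X,p}$, which is the homogeneous coordinate ring of the tangent cone $C_pX$: the transversality condition $\ker L\cap C_pX=\{0\}$ says exactly that the initial forms of the $d$ linear coordinates cutting out $L$ constitute a homogeneous system of parameters on $C_pX$, and by the Northcott--Rees criterion this is equivalent to $\mathfrak{q}$ being a minimal reduction of $\mathcal{M}_{X,p}$. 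The pure-dimensionality hypothesis enters here to guarantee that the analytic spread of $\mathcal{M}_{X,p}$ equals $d$, so that a minimal reduction can indeed be generated by $d$ elements. Once this reduction property is established, combining the preceding displayed equality with the invariance of the Hilbert--Samuel multiplicity under passage to a reduction concludes the proof.
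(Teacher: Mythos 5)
The paper itself offers no proof of this proposition --- it is quoted from the literature with pointers to \cite{JongP:2000} and \cite{Chirka:1989} --- so there is no internal argument to measure yours against; judged on its own terms, your proof is correct and follows the standard commutative-algebra route (essentially that of de Jong--Pfister), whereas Chirka's treatment of the same fact is more geometric, via volumes and the degree of the branched cover. Your chain of reductions is sound: genericity in the paper's sense means precisely $\ker L\cap C(X,p)=\{0\}$, which yields properness and finiteness of $L|_{X\cap U}$ and hence finiteness of $\mathcal{O}_{X,p}$ over $A=\mathcal{O}_{\C^d,0}$; the associativity (projection) formula over the regular domain $A$, with trivial residue field extensions, gives $e(\mathfrak{n}\mathcal{O}_{X,p},\mathcal{O}_{X,p})={\rm rank}_A\,\mathcal{O}_{X,p}=m$; and the same transversality condition, read in ${\rm gr}_{\mathcal{M}_{X,p}}\mathcal{O}_{X,p}$, says the linear forms defining $L$ form a homogeneous system of parameters, so $\mathfrak{n}\mathcal{O}_{X,p}$ is a reduction of $\mathcal{M}_{X,p}$ by Northcott--Rees and the two Hilbert--Samuel multiplicities coincide. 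Two small corrections of emphasis: the analytic spread of the maximal ideal of any Noetherian local ring equals its Krull dimension, and the existence of a $d$-generated minimal reduction only requires the residue field $\C$ to be infinite, so pure dimensionality plays no role at that point; where it is genuinely needed is (i) in identifying ${\rm rank}_A\,\mathcal{O}_{X,p}$ with the number of sheets of the branched cover (together with reducedness of $\mathcal{O}_{X,p}$), and (ii) in the associativity formula, to ensure that every minimal prime of $\mathcal{O}_{X,p}$ has coheight $d$ over $A$, so that $e(\mathfrak{n}\mathcal{O}_{X,p},\mathcal{O}_{X,p})$ is computed entirely by the generic rank.
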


In the above proposition, let us make clear that a linear projection $L\colon\C^n\rightarrow\C^d$ is generic when the intersection of $Ker(L)$ with the tangent cone $C(X,p)$ (see Section \ref{sec:tg_cones}) is only the null vector.

\begin{example}
Let $X\subset\C^n$ be a hypersurface defined as the zero set of an analytic function $f\colon U\subset\C^n\rightarrow\C$ in a neighbourhood $U$ of the origin $0\in\C^n$; $f(0)=0$. Let us write $$f(z)=f_m(z)+f_{m+1}(z)+\cdots+f_{k}(z)+\cdots $$ where each $f_k(z)$ is a homogeneous polynomial of degree $k$ and $f_m\not\equiv 0$. In this case, $m(X,0)=m$.
\end{example}

\begin{definition}
 We say that $p\in X \subset \C^n$ is a {\bf regular}\index{regular} point\index{point!regular} (of $X$) if $\mathcal{O}_{X,p}\cong \mathcal{O}_{\C^k,0}$. A point $p\in X$ is {\bf singular}\index{singular}\index{point!singular} if it is not regular.
\end{definition}

\begin{corollary}\label{cor:regularity_vs_mult}
	Let $X\subset\C^n$ be an analytic subset such that in a neighbourhood of $p\in X$ it has pure dimension $d$. Then, $p$ is a regular point of $X$ if and only if $m(X,p)=1$.
\end{corollary}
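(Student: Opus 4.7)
The plan is to prove the two implications separately, relying on the invariance of multiplicity under analytic equivalence (already established) and on the geometric characterization of multiplicity as the topological degree of a generic linear projection.

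For the forward direction, assume $p$ is regular, so that $\mathcal{O}_{X,p}\cong\mathcal{O}_{\C^k,0}$ for some $k$. Since the Hilbert--Samuel polynomial has degree equal to the Krull dimension of the local ring, and $X$ has pure dimension $d$ at $p$, both local rings must have Krull dimension $d$; hence $k=d$. The theorem on invariance of multiplicity then gives $m(X,p)=m(\C^d,0)$, and the earlier example (computing $m(\C^n,0)=1$ for every $n$) closes this direction.

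For the converse, assume $m(X,p)=1$ and invoke the proposition on generic projections to choose a linear $L\colon\C^n\to\C^d$ whose restriction $\pi:=L|_{X\cap U}\colon X\cap U\to V$ is a finite analytic map of topological degree $1$ for suitable neighbourhoods of $p$ and $L(p)$. I would first show that $X$ is irreducible at $p$: writing the germ of $X$ at $p$ as a union of irreducible components $X_1,\dots,X_r$ (each of pure dimension $d$), each restriction $\pi|_{X_i}$ is a finite surjection onto $V$ and hence has positive topological degree. Since for a generic $v\in V$ the preimages in distinct components are disjoint (the intersection of two components has dimension less than $d$, so projects to a proper analytic subset of $V$), these degrees sum to the total degree $1$, forcing $r=1$. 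Consequently $\mathcal{O}_{X,p}$ is an integral domain, and $\pi^{\ast}\colon\mathcal{O}_{\C^d,0}\hookrightarrow\mathcal{O}_{X,p}$ is a finite injective extension of local domains inducing a degree-one, and hence trivial, extension of fraction fields. Therefore $\mathcal{O}_{X,p}$ sits inside $\mathrm{Frac}(\mathcal{O}_{\C^d,0})$ and is integral over $\mathcal{O}_{\C^d,0}$; integral closedness (normality) of the regular local ring $\mathcal{O}_{\C^d,0}$ forces $\mathcal{O}_{X,p}=\mathcal{O}_{\C^d,0}$, so $p$ is regular.

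The substantive part is the converse. The two delicate steps are deducing local irreducibility of $X$ at $p$ from the topological degree being $1$ and converting that degree into the triviality of the fraction field extension; together these set up the normality argument, after which the equality of local rings is automatic. A more geometric route through the tangent cone would require an extra step to pass from a linear $d$-dimensional tangent cone of multiplicity one back to smoothness of $X$ itself, which seems cleaner to avoid by going directly through the algebraic route via integral closure.
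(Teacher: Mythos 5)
Your proof is correct: the forward direction via the invariance of multiplicity plus $m(\C^d,0)=1$, and the converse via a generic degree-one projection, local irreducibility, and the normality (integral closedness) of $\mathcal{O}_{\C^d,0}$, is exactly the standard argument that the paper implicitly relies on (it states the corollary without proof and leaves the key implication as Exercise \ref{mult:exer:mult_one_smooth}). The only points worth making explicit are that $\mathcal{O}_{X,p}$, being a ring of actual functions on $X$, is reduced --- so local irreducibility really does make it a domain --- and that the identification of the topological degree of the branched cover with the degree of the fraction-field extension is the local parametrization theorem.
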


Although multiplicity is enough to identify the regular local analytic structures, the following example shows us that discrete invariants are not enough to describe all local analytic structures.

\begin{example}
	(Whitney 1965) Let us consider the following family of four lines through the origin $$X_t \colon \  x\cdot y\cdot (y-x)\cdot (y-tx)=0.$$ For generic $s\neq t$, H. Whitney (see \cite{Whitney:1965b}) noted that the local analytic structures $(X_s,0)$ and $(X_t,0)$ are not analytic equivalent.
\end{example}

\begin{center}
\begin{tikzpicture}
\draw (-2,0) -- (2,0);
\draw (-1.5,-1.5) -- (1.5,1.5);
\draw (0,-2) -- (0,2);
\draw[purple] (-2,-1) -- (2,1);
\coordinate [label= 0] (0) at (0.1,-0.5);
\coordinate[label= $X_t$] (1) at (2.5,1.5);
\end{tikzpicture}
\end{center}

%%%%%%%%%%%%%%%%%%%%%%%%%%%%%%%%%%%%%%%%%%%%%%%%%%%%%%%%%%%%%%%%%%%%%%%%%%%%%%%%%%%%%%%%%%%%%%%%%%%%%%%%%%%%%%%%%%%%%%%%%%%%%%%%%%%%%%%%%%%%%%%%%%%%%%%%%%%%%%%%%%%%%%%%%%

\subsection{Local Topological Structure}
Let $p\in X\subset\C^n$ and $q\in Y\subset\C^n$ be analytic subsets. We say that the pair $(X,p)$ is {\bf topological equivalent}\index{equivalent!topological} to $(Y,q)$ if there exist neighbourhoods $U\subset\C^n$ of $p$ and $V\subset\C^m$ of $q$ and a homeomorphism $F\colon U\cap X\rightarrow V\cap Y$; $F(U\cap X)=V\cap Y$ and $F(p)=q$. This definition give us an equivalence relation; each equivalence class is what we call a {\bf local topological structure}\index{structure!local topological}. We establish that the equivalence class of $(\C^n,0)$ is the {\bf regular local topological structure}\index{structure!regular local topological} in dimension $n$.

In the 60's, results that pointed to the possibility of describing the local topology of analytical sets only with discrete invariants greatly boosted research on the subject and attracted eminent mathematicians to questions that are still open today. Actually, it was proved that there are only countably many infinite local topological structures and,  in the direction of looking for nice discrete invariants of the local topological structures, in 1971 O. Zariski (see \cite{Zariski:1971}) placed the following question which is still unanswered 

\begin{center}[O. Zariski 1971]
	{\it Let $X,Y\subset\C^n$ be analytic subsets of codimension 1 such that $(X,p)$ and $(Y,q)$ are topological equivalent, is it true that $m(X,p)$ must be equal to $m(Y,q)$?}
\end{center}

%%%%%%%%%%%%%%%%%%%%%%%%%%%%%%%%%%%%%%%%%%%%%%%%%%%%%%%%%%%%%%%%%%%%%%%%%%%%%%%%%%%%%%%%%%%%%%%%%%%%%%%%%%%%%%%%%%%%%%%%%%%%%%%%%%%%%%%%%%%%%%%%%%%%%%%%%%%%%%%%%%%%%%%%%%

\subsection{Local Lipschitz Structure}
Let $p\in X\subset\C^n$ and $q\in Y\subset\C^n$ be analytic subsets. We say that the pair $(X,p)$ is {\bf bi-Lipschitz equivalent}\index{equivalent!bi-Lipschitz} to $(Y,q)$ if there exist neighbourhoods $U\subset\C^n$ of $p$ and $V\subset\C^m$ of $q$ and a Lipschitz map $F\colon U\cap X\rightarrow V\cap Y$; $F(p)=q$ with inverse map $G\colon V\cap Y\rightarrow U\cap X$ also Lipschitz, i.e. there exists a positive constant $\lambda\geq 1$ such that the bi-univocal correspondence $F$ satisfies:

$$ \frac{1}{\lambda} |x_1-x_2|\leq |F(x_1)-F(x_2)|\leq \lambda |x_1-x_2| \ \forall x_1,x_2\in X\cap U.$$

This definition give us an equivalence relation; each equivalence class is what we call a {\bf local Lipschitz structure}\index{structure!local Lipschitz}. We establish that the equivalence class of $(\C^n,0)$ is the {\bf regular local Lipschitz structure}\index{structure!regular local Lipschitz} in dimension $n$. Lipschitz Geometry of Singularities is the research field in Mathematics in charge of describing all local Lipschitz structures.

In the mid 80's (see \cite{Mo}), Tadeusz Mostowski proved that  there are only countably many infinite local Lipschitz structures (Parusinski proved the real version of such result in \cite{Parusinski:1994}); this result greatly stimulated research on the Lipschitz geometry of singularities, mainly in looking for discrete invariants for the respective classification problem. In this direction, the main objective of these notes is to bring a survey of results, some of them published with other collaborators, on the following issues:

%%%%%%%%%%%%%%%%%%%%%%%%%%%%%%%%%%%%%%%%%%%%%%%%%%%%%%%%%%%%%%%%%%%%%%%%%%%%%%%%%%%%%%%%%%%%%%%%%%%%%%%%%%%%%%%%%%%%%%%%%%%%%%%%%%%%%%%%%%%%%%%%%%%%%%%%%%%%%%%%%%%%%%%%%%

\begin{enumerate}[leftmargin=0pt]
	\item[]{\bf Question AL($d$):} Let $X\subset \C^n$ and $Y\subset \C^m$ be two complex analytic sets with $\dim X=\dim Y=d$, $0\in X$ and $0\in Y$. 
	If there exists a bi-Lipschitz 
	homeomorphism $\varphi\colon (X,0)\to (Y,0)$, then is $m(X,0)=m(Y,0)$?
	\item[]{\bf Question  AL($n,d$):} Let $X,Y\subset \C^n$ be two complex analytic sets with $\dim X=\dim Y=d$ and $0\in X\cap Y$. 
	If there exists a bi-Lipschitz homeomorphism $\varphi\colon (\C^n,X,0)\to (\C^n,Y,0)$ (i.e., $\varphi\colon (\C^n,0)\to (\C^n,0)$ is a bi-Lipschitz homeomorphism such that $\varphi(X)=Y$), then is $m(X,0)=m(Y,0)$?
\end{enumerate}

%%%%%%%%%%%%%%%%%%%%%%%%%%%%%%%%%%%%%%%%%%%%%%%%%%%%%%%%%%%%%%%%%%%%%%%%%%%%%%%%%%%%%%%%%%%%%%%%%%%%%%%%%%%%%%%%%%%%%%%%%%%%%%%%%%%%%%%%%%%%%%%%%%%%%%%%%%%%%%%%%%%%%%%%%%

We call each one of the above questions of {\bf bi-Lipschitz invariance of the multiplicity problem}\index{multiplicity!bi-Lipschitz invariance}. We emphasize the importance of citing some of the main references of pioneering results in the proposed investigation of multiplicity as an invariant of local structures that are less rigid than analytic (like Lipschitz structures), namely: in the paper \cite{Gau-Lipman:1983}, Gau and Lipman proved the invariance of multiplicity for differentiable local structures (Ephraim, in \cite{Ephraim:1976a}, addressed the case of continuously differentiable local structures, see also the result proved by Trotman in \cite{Trotman:1977}) and, in the paper \cite{Comte:1998}, assuming severe restrictions on the Lipschitz constants that conjugate two pairs $(X,p)$ and $(Y,q)$, Comte showed that $m(X, p)=m(Y,q)$.

%%%%%%%%%%%%%%%%%%%%%%%%%%%%%%%%%%%%%%%%%%%%%%%%%%%%%%%%%%%%%%%%%%%%%%%%%%%%%%%%%%%%

In recent works written in collaboration with Birbrair (UFC), Fern\'andez de Bobadilla (BCAM), L\^e (Aix-Marseille) and Verbitsky (IMPA), we proved the following results.

\begin{description}
	\item[1] Regular local Lipschitz structure is equivalent to regular local analytic structure  (see \cite{BirbrairFLS:2016} and \cite{Sampaio:2016});
	\item[2] In dimension 2, the multiplicity is an invariant of the local Lipschitz structure (see \cite{BobadillaFS});
	\item[3] In dimension greater than 2, the multiplicity is not an invariant of the local Lipschitz structure (see \cite{BirbrairFSV:2020}).
\end{description}

% In the late 60's, Pham and Teissier demonstrated that multiplicity is an invariant of the local Lipschitz structure in dimension 1 (see also \cite{N-P}, \cite{F} and \cite{FSS}).
Pham and Teissier in \cite{P-T}, with contributions
of Fernandes in \cite{F} and Neumann and Pichon in \cite{N-P}, proved that multiplicity is an invariant of the local Lipschitz structure in dimension 1 (see also \cite{FSS}). 
Therefore, Results 2 and 3 above, together with the result of Pham and Teissier, can be summarized as follows: {the multiplicity is invariant of the local Lipschitz structure only in dimensions 1 and 2}.

%%%%%%%%%%%%%%%%%%%%%%%%%%%%%%%%%%%%%%%%%%%%%%%%%%%%%%%%%%%%%%%%%%%%%%%%%%%%%%%%%%%%%

Let us describe how these notes are organized. Section 2 contains preliminary results where we introduce the concept of Lipschitz normally embedded sets and we present the Pancake Decomposition Theorem. Section 3 is devoted to explore the notion of tangent vectors, more precisely, we introduce the concept of tangent cone to singular points and, among other results, we present a proof of the bi-Lipschitz invariance of the tangent cone of subanalytic singularities. In Section 4, we recover the notion of regular local Lipschitz structure and we introduce other notions of local Lipschitz regularities; we also present a proof that Lipschitz regularity of  a  local analytic structure implies that such structure must be analytic regular. In Section 5, we introduced the so-called relative multiplicities, we present a proof that such multiplicities are bi-Lipschitz invariant and we show that Question AL($d$) (resp. AL($n,d$)) has a positive answer if, and only if, it has a positive answer for irreducible homogeneous algebraic singularities. Finally, in Section 6 we address the problem of the bi-Lipschitz invariance of the multiplicity.

\section{Lipschitz normally embedded sets}
In this section we define Lipschitz normally embedded sets and we present some important properties of this notion.

Let $Z\subset\R^n$ be a path connected subset. Given two points $q,\tilde{q}\in Z$, we define the {\bf inner distance}\index{inner distance} on $Z$ between $q$ and $\tilde{q}$ by the number:
$$d_Z(q,\tilde{q}):=\inf\{ \mbox{Length}(\gamma) \ | \ \gamma \ \mbox{is an arc on} \ Z \ \mbox{connecting} \ q \ \mbox{to} \ \tilde{q}\}.$$
\begin{definition}
We say that $Z$ is {\bf Lipschitz normally embedded (LNE)}\index{LNE}\index{Lipschitz normally embedded}, if there is a constant $C\geq 1$ such that $d_Z(q,\tilde{q})\leq C\|q-\tilde{q}\|$, for all $q,\tilde{q}\in Z$. We say that $Z$ is {\bf Lipschitz normally embedded at $p$} (shortly LNE at $p$), if there is a neighbourhood $U$ such that $p\in U$ and $Z\cap U$ is an LNE set or, equivalently, that the germ $(Z,p)$ is LNE. In this case, we also say that $Z$ is $C$-LNE (resp. $C$-LNE at $p$). 
\end{definition}

\begin{proposition}\label{lne:exer:lne_lip_inv}
Let $X\subset \R^n$ and $Y\subset \R^m$ be non-empty subsets. Assume that there exists a bi-Lipschitz homeomorphism $\psi\colon X\to Y$. Then, $X$ is LNE\index{LNE} at $x_0\in X$ if and only if $Y$ is LNE at $\psi(x_0)$.
\end{proposition}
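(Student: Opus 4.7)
The plan is to exploit the symmetry of the bi-Lipschitz condition and reduce the statement to showing that a bi-Lipschitz image of an LNE neighborhood is again LNE. Concretely, I would prove only one implication; the other follows by applying the same argument to $\psi^{-1}$, which is Lipschitz with the same constant $\lambda$. So assume $X$ is $C$-LNE at $x_0$, witnessed by an open $U\subset\R^n$ such that $d_{X\cap U}(p,q)\le C\|p-q\|$ for all $p,q\in X\cap U$, and set $y_0:=\psi(x_0)$.

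The first step is to pick the right neighborhood of $y_0$. Since $\psi:X\to Y$ is a homeomorphism, $\psi(X\cap U)$ is open in $Y$, so there is an open $V\subset\R^m$ with $y_0\in V$ and $Y\cap V=\psi(X\cap U)$. Given $y_1,y_2\in Y\cap V$, let $x_i:=\psi^{-1}(y_i)\in X\cap U$. The goal reduces to bounding $d_{Y\cap V}(y_1,y_2)$ by a multiple of $\|y_1-y_2\|$.

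The second step is the transport-of-length observation: if $\gamma$ is a rectifiable arc in $X\cap U$ joining $x_1$ to $x_2$, then $\psi\circ\gamma$ is an arc in $Y\cap V$ joining $y_1$ to $y_2$, and using the Lipschitz inequality on every partition of the parameter interval and taking the supremum yields
$$\mathrm{Length}(\psi\circ\gamma)\ \le\ \lambda\,\mathrm{Length}(\gamma).$$
Taking the infimum over such $\gamma$ gives $d_{Y\cap V}(y_1,y_2)\le\lambda\,d_{X\cap U}(x_1,x_2)$. Now chain the inequalities: by $C$-LNE of $X\cap U$ and the Lipschitz estimate on $\psi^{-1}$,
$$d_{Y\cap V}(y_1,y_2)\ \le\ \lambda\,C\,\|x_1-x_2\|\ \le\ \lambda^2 C\,\|y_1-y_2\|,$$
so $Y\cap V$ is $\lambda^2 C$-LNE and hence $Y$ is LNE at $y_0$.

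There is no real obstacle; the only point requiring a little care is the transport-of-length step, and even that is routine once one recalls that arc length is defined as a supremum of lengths of inscribed polygonal paths, so a $\lambda$-Lipschitz map multiplies each such length by at most $\lambda$. The choice of $V$ via the homeomorphism property is what guarantees that the image arc $\psi\circ\gamma$ stays inside the chosen neighborhood, which is the one subtlety worth spelling out explicitly.
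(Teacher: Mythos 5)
Your proof is correct; the paper leaves this proposition as an exercise, and your argument (transporting arcs through $\psi$, bounding their lengths via inscribed polygonal partitions, and chaining the Lipschitz estimates to get the constant $\lambda^{2}C$ on the neighbourhood $V$ with $Y\cap V=\psi(X\cap U)$) is exactly the intended one. The symmetry reduction via $\psi^{-1}$ disposes of the converse, so nothing is missing.
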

\begin{proof}
The proof is left as an exercise for the reader.
\end{proof}

Let us recall the following ``Pancake decomposition''\index{Pancake decomposition} result:
\begin{lemma}[Proposition 3 in \cite{Kurdyka:1997}]\label{lne:lem:lne_decomp}
Let $X\subset\R^m$ be a subanalytic set and $\varepsilon>0$. Then for each $p$ in the closure of $X$, denoted by $\overline{X}$, there exist $\delta>0$ and a finite decomposition 
$X\cap B_{\delta}(p) = \bigcup_{j =1}^k \Gamma_j$  such that:
\begin{enumerate}
\item each $ \Gamma_j$ is a subanalytic connected analytic submanifold of  
$\R^m$,
\item each $ \overline \Gamma_j$ satisfies $d_{\overline \Gamma_j}(x,y)\leq(1+\varepsilon)\|x-y\|$ for any $x,y\in\overline \Gamma_j$. 
\end{enumerate}
\end{lemma}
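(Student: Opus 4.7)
The plan is to deduce the pancake decomposition from the combination of (a) a standard subanalytic stratification, which yields the submanifold structure required in (1), and (b) a further refinement that makes the Gauss map almost constant on each piece, from which the inner-metric estimate in (2) follows. So the proof would have two layers: an algebraic-geometric layer producing the partition, and a differential-geometric layer producing the distance bound.

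First I would fix $p\in\overline{X}$ and choose $\delta>0$ small enough so that $X\cap B_{\delta}(p)$ is subanalytic and relatively compact. Applying a standard subanalytic stratification theorem, I decompose $X\cap B_{\delta}(p)$ into a finite family of connected subanalytic analytic submanifolds $S_1,\dots,S_l\subset\R^m$. This already takes care of (1). For the metric condition, the key observation is: on an analytic submanifold whose tangent spaces all make an angle at most $\alpha$ with some fixed $d$-plane $V$, the orthogonal projection onto $V$ is a local diffeomorphism with controlled distortion, so nearby points can be joined by a path whose length is at most $(1+\eta(\alpha))\|x-y\|$, with $\eta(\alpha)\to 0$ as $\alpha\to 0$. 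Hence, given $\varepsilon>0$, I fix $\alpha>0$ such that $\eta(\alpha)<\varepsilon$.

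Next I would refine each stratum $S_i$ using the Gauss map $\gamma_i\colon S_i\to G(d_i,m)$, $x\mapsto T_x S_i$, which is subanalytic because $S_i$ is subanalytic and the tangent map of a subanalytic manifold is subanalytic. Cover the compact Grassmannian $G(d_i,m)$ by finitely many open sets $U_1,\dots,U_{N_i}$ of diameter less than $\alpha$ (in the standard angular metric), and consider the subanalytic preimages $\gamma_i^{-1}(U_j)$. Intersecting with a subanalytic cell decomposition of each preimage and taking connected components gives a finite partition of $S_i$ into subanalytic connected analytic submanifolds $\Gamma^{(i)}_j$, each carrying tangent spaces contained in a common $\alpha$-ball in the Grassmannian. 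Re-indexing, this produces the desired family $\{\Gamma_j\}_{j=1}^k$.

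On each such $\Gamma_j$, with $V$ a representative $d_j$-plane from the Gauss image, the tangent space of $\Gamma_j$ at every point is the graph of a linear map from $V$ to $V^\perp$ of norm $\lesssim \tan\alpha$. For any $x,y\in\Gamma_j$ close enough, the straight segment from $\pi_V(x)$ to $\pi_V(y)$ lifts to a curve in $\Gamma_j$ of length at most $(1+\varepsilon)\|x-y\|$, provided $\delta$ was chosen sufficiently small that $\Gamma_j$ is a graph over $V$ near $x$ (one may need an additional shrinking of $\delta$ or further partition into connected subpieces). Finally, to pass from $\Gamma_j$ to $\overline{\Gamma_j}$ in condition (2), one uses that curves in $\Gamma_j$ approximating the straight lift extend by continuity up to the boundary, so the bound $d_{\overline{\Gamma_j}}(x,y)\le(1+\varepsilon)\|x-y\|$ carries over by taking a limit argument together with an elementary density/approximation argument in the subanalytic category.

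The main obstacle I anticipate is the control at the boundary of each piece: a priori the inner distance on $\overline{\Gamma_j}$ could be larger than on $\Gamma_j$ because shortest paths on the closure might be forced to travel through points where the Gauss map degenerates. Handling this cleanly requires either an additional refinement ensuring that the frontier is a finite union of strata of strictly lower dimension over which approximating curves can be perturbed into $\Gamma_j$, or a direct subanalytic curve-selection argument allowing one to perturb any rectifiable curve in $\overline{\Gamma_j}$ into $\Gamma_j$ with arbitrarily small length increase.
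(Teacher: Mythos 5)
The paper does not actually prove this lemma; it is quoted verbatim as Proposition~3 of Kurdyka--Orro \cite{Kurdyka:1997}, so the only comparison available is with the argument in that reference. Measured against it, your sketch is missing the central idea. The step that fails is the claim that, after refining so that all tangent planes of a connected piece $\Gamma_j$ lie in an $\alpha$-ball of the Grassmannian around some $d$-plane $V$, the segment $[\pi_V(x),\pi_V(y)]$ lifts to a short path in $\Gamma_j$. Take $V=\R^2\times\{0\}\subset\R^3$ and $\Gamma=U\times\{0\}$ with $U$ a C-shaped (horseshoe) semianalytic planar domain: every tangent plane equals $V$ exactly, $\Gamma$ is a connected subanalytic analytic submanifold, and yet the two tips of the horseshoe are Euclidean-close while their inner distance is bounded below by a fixed constant. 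So condition (2) fails with Gauss-map oscillation zero. A second failure mode is non-injectivity of $\pi_V$: a bounded analytic ``staircase'' surface $\{(r\cos(z/\epsilon),\,r\sin(z/\epsilon),\,z)\}$ winding twice over an annulus has tangent planes within angle $O(\epsilon)$ of the horizontal plane, but carries pairs of points at Euclidean distance $O(\epsilon)$ whose inner distance is of order $1$. In short, near-constancy of the tangent plane controls the \emph{local} metric distortion of $\pi_V$ but says nothing about the \emph{global} shape of the piece, and it is exactly the global shape that condition (2) constrains.

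What the actual proof supplies, and what your sketch omits, is a decomposition into pieces that are graphs of analytic maps with small differential over cells that are themselves quasi-convex with constant close to $1$ -- Kurdyka's $L$-regular (standard) cells of the form $\{(x',x_d)\ :\ \varphi(x')<x_d<\psi(x')\}$ over a lower-dimensional cell, with $\varphi,\psi$ having bounded derivatives after a suitable linear change of coordinates. The quasi-convexity of each cell is established by induction on dimension and is the genuinely nontrivial content of the theorem; the Gauss-map refinement you describe is at best one ingredient. By contrast, the boundary issue you flag at the end is the easy part: once $d_{\Gamma_j}(x,y)\le(1+\varepsilon)\|x-y\|$ holds on the open piece, the same bound on $\overline{\Gamma_j}$ follows by approximating boundary points from inside and passing to the limit in the lengths of connecting paths. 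I would redirect the effort from the closure argument to the construction of quasi-convex cells, or simply cite \cite{Kurdyka:1997} as the paper does.
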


The above result has the following consequence:

\begin{proposition}\label{lne:prop:top_inner} Let $X\subset \R^m$ be a subanalytic set. Then $d_{X}$ induces the same topology on $X$ as the topology induced by the standard topology on $\R^m$.
\end{proposition}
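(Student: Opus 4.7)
The plan is to verify that the identity map of $X$ is a homeomorphism between $(X,d_X)$ and $(X,\|\cdot\|)$ by proving continuity in each direction. The easy direction comes first: any rectifiable arc in $X$ joining $x,y\in X$ has length at least the chord length $\|x-y\|$, so $d_X(x,y)\geq \|x-y\|$ on $X$. This makes $(X,d_X)\to(X,\|\cdot\|)$ $1$-Lipschitz, hence continuous; equivalently, every Euclidean-open subset of $X$ is automatically $d_X$-open.

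For the reverse direction I would fix $p\in X$ and $\varepsilon>0$ and search for $\delta>0$ such that $x\in X$, $\|x-p\|<\delta$ force $d_X(p,x)<\varepsilon$. The plan is to invoke the Pancake Decomposition (Lemma \ref{lne:lem:lne_decomp}) at $p$ with tolerance $\varepsilon_0=1$, obtaining $\delta_0>0$ and a finite decomposition $X\cap B_{\delta_0}(p)=\bigcup_{j=1}^{k}\Gamma_j$ whose closures $\overline{\Gamma}_j$ are each $2$-LNE. I would partition $\{1,\dots,k\}=J\sqcup J^c$ with $J=\{j:p\in\overline{\Gamma}_j\}$. For each $j\in J^c$ the closed set $\overline{\Gamma}_j$ has strictly positive distance $r_j$ from $p$; since $J^c$ is finite, $r:=\min_{j\in J^c} r_j>0$ (read as $+\infty$ if $J^c=\varnothing$). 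Then whenever $x\in X$ satisfies $\|x-p\|<\min(r,\delta_0)$, the pancake $\Gamma_j\ni x$ is forced to have $j\in J$, so that $p,x\in \overline{\Gamma}_j$ and the pancake estimate yields $d_{\overline{\Gamma}_j}(p,x)\leq 2\|x-p\|$.

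The main obstacle is then to upgrade this to a bound on $d_X(p,x)$, since a priori a short path in $\overline{\Gamma}_j$ need not lie entirely in $X$. Here I would use that $\Gamma_j\subseteq X$ is an open dense subset of its closure with $\overline{\Gamma}_j\setminus\Gamma_j$ subanalytic of strictly smaller dimension, so any arc in $\overline{\Gamma}_j$ from $p$ to $x$ can be perturbed into an arc lying in $\Gamma_j$ except possibly at its endpoints, with arbitrarily small increase in length. This yields $d_X(p,x)\leq 3\|x-p\|$ in the relevant range, and taking $\delta:=\min(r,\delta_0,\varepsilon/3)$ completes the argument. The technical care required in this perturbation step is what I would expect to occupy most of a fully rigorous write-up; everything else is either the one-line Euclidean-to-inner inequality or a direct bookkeeping exercise on the finite family of pancakes.
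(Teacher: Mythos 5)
The paper gives no proof here (it is left as an exercise), but it presents the proposition precisely as a consequence of the Pancake Decomposition, and your argument follows exactly that intended route: the chord--arc inequality $\|x-y\|\leq d_X(x,y)$ for one direction, and for the other a pancake decomposition at $p$ plus the observation that for $x$ close enough to $p$ the pancake containing $x$ must have $p$ in its closure, giving $d_{\overline{\Gamma}_j}(p,x)\leq 2\|x-p\|$. All of that is correct.

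The one step I would push back on is the final ``perturbation'' step, which is simultaneously unnecessary in the relevant setting and not actually justified in the generality you state it. Under the paper's standing conventions the sets to which this proposition is applied are closed, and then $\overline{\Gamma}_j\subseteq\overline{X\cap B_{\delta_0}(p)}\subseteq X$, so every arc in $\overline{\Gamma}_j$ already lies in $X$ and you get $d_X(p,x)\leq d_{\overline{\Gamma}_j}(p,x)\leq 2\|x-p\|$ with no further work; the ``main obstacle'' you identify does not arise. If instead you insist on a non-closed subanalytic $X$, then the claim that any arc in $\overline{\Gamma}_j$ can be pushed into $\Gamma_j$ with arbitrarily small increase in length does \emph{not} follow from the two listed properties of a pancake, and is false for some sets satisfying them: take $\Gamma=B_1(0)\setminus\{(t,0):t\geq 0\}\subset\R^2$, a connected subanalytic analytic submanifold whose closure is the closed disc (hence $1$-LNE), yet a short arc in $\overline{\Gamma}$ joining $(1/2,\epsilon)$ to $(1/2,-\epsilon)$ crosses the slit and cannot be deformed into $\Gamma$ without a detour of length bounded below independently of $\epsilon$. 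So to make that step rigorous you would need more than ``$\overline{\Gamma}_j\setminus\Gamma_j$ has smaller dimension''; you would need additional structural properties of the pancakes from Kurdyka's construction (e.g.\ that they are locally closed Lipschitz cells with $d_{\Gamma_j}=d_{\overline{\Gamma}_j}$ on $\Gamma_j$), which the lemma as quoted does not assert. My recommendation: state the closedness hypothesis explicitly and delete the perturbation step; the rest of your write-up then stands as a complete proof.
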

\begin{proof}
The proof is left as an exercise for the reader.
\end{proof}

\subsection{Exercises}\label{sec:exercises_preliminaries}

\begin{exercise}
Prove that any connected compact $C^1$ submanifold of $\R^n$ is LNE.
\end{exercise}

\begin{exercise}\label{lne:prop:inner_equals_geodesic_dist_smooth} Let $X\subset \C^n$ be a complex analytic set. Then for any $x,y\in X$, we have that 
$$
d_{X}(x,y)=\inf\limits_{\beta\in \Omega(x,y)}\int_0^1\|\beta'(t)\|dt,
$$
where $\Omega(x,y)=\{\beta\colon[0,1]\to X; \beta(0)=x,\, \beta(1)=y\mbox{ and }\beta \mbox{ is piecewise }C^1\}$. 
\end{exercise}

\begin{exercise}
Prove Proposition \ref{lne:exer:lne_lip_inv}.
\end{exercise}

\begin{exercise}
Prove Proposition \ref{lne:prop:top_inner}.
\end{exercise}

\begin{exercise}
Let $X\subset \C^2$ be a complex analytic curve. Let $X_1,...,X_r$ be the irreducible components of $X$ (at $0$). Then, $X$ is LNE\index{LNE} at $0$ if and only if each $X_i$ is smooth at $0$ and for $i\not=j$, $X_j$ and $X_i$ meet transversally at $0$.
\end{exercise}

\begin{exercise}
Let $X\subset \C^m$ be a complex algebraic set. Assume that $d_X(x,y)=\|x-y\|$ for all $x,y\in X$. Prove that $X$ is an affine linear subspace of $\C^m$.
\end{exercise}

\section{Tangent cones}\label{sec:tg_cones}

We begin this section with the definition of tangent cone of a subset $X\subset \mathbb{K}^{m}$ at a point, where $\mathbb{K}$ is $\R$ or $\C$.

\begin{definition}
Let $X\subset \mathbb{K}^{m}$ be a set such that $x_0\in \overline{X}$.
We say that $v\in \mathbb{K}^{m}$ is a {\bf tangent vector\index{tangent vector} of $X$ at $x_0\in\mathbb{K}^{m}$} if there are a sequence of points $\{x_i\}\subset X$ tending to $x_0$ and a sequence of positive real numbers $\{t_i\}$ such that 
$$\lim\limits_{i\to \infty} \frac{1}{t_i}(x_i-x_0)= v.$$
Let $C(X,x_0)$ denote the set of all tangent vectors of $X$ at $x_0$. We call $C(X,x_0)$ the {\bf tangent cone\index{tangent cone}\index{cone!tangent} of $X$ at $x_0$}. 
\end{definition}

Recall that a subset of $\C^n$ is called a {\bf complex cone}\index{cone!complex} 
if it is a union of one-dimensional complex linear subspaces of $\C^n$.

\begin{remark}\label{remark-tangent-cone}
{\rm In the case where $X\subset \C^m$ is a complex analytic set such that $0\in X$, $C(X,0)$ is the zero locus of a set of complex homogeneous polynomials (see \cite[Theorem 4D]{Whitney:1972}). In particular, $C(X,0)$ is a complex algebraic subset of $\C^m$ and is a complex cone. More precisely,  let $\mathcal{I}(X)$ be the ideal of $\mathcal{O}_{\C^m,0}$ given by the germs which vanishes on $X$. For each $f\in\mathcal{O}_{\C^m,0}$, let
$$ f = f_k+f_{k+1}+\cdots $$ be its Taylor development where each $f_j$ is a homogeneous polynomial\index{polynomial!homogeneous} of degree $j$ and $f_k\neq 0$. So, we say that $f_k$ is the {\bf initial part}\index{initial part} of $f$ and we denote it by ${\bf in}(f)$. In this way, $C(X,0)$ is the affine variety of the ideal $\mathcal{I}_*(X)=\langle{\bf in}(f);\, f\in \mathcal{I}(X)\rangle $}.
\end{remark}

\begin{example}
Examples of tangent cones\index{tangent cones}:
\begin{itemize}
 \item [(1)] If $X=\{(x,y)\in \R^2; x^3=y^2\}$ then $C(X,0)=\{(x,y)\in \R^2; y=0$ and $x\geq 0\}$;
 \item [(2)] Let $f\colon (\C^n,0)\to (\C,0)$ be a complex analytic function and let $ f = f_k+f_{k+1}+\cdots $ be its Taylor development at the origin where each $f_j$ is a homogeneous polynomial of degree $j$ and $f_k\neq 0$. Then $C(V(f),0)=V(f_k)$;
 \item [(3)] If $X=\{(x,y)\in \C^2; x^3=y^2\}$ then $C(X,0)=\{(x,y)\in \C^2; y=0\}$.
\end{itemize}
\end{example}

\begin{proposition}\label{tg:prop:basic_prop_tg}
Basic properties:
\begin{itemize}
 \item [(1)] If $A\subset X$ then $C(A,p)\subset C(X,p)$ for all $p\in \overline{A}$;
 \item [(2)] For $X,Y\subset \mathbb{R}^m$ and $p\in \overline{X}\cap \overline{Y}$, we have
 \begin{itemize}
  \item [(a)] $C(X\cup Y,p)=C(X,p)\cup C(Y,p)$;
  \item [(b)] $C(X\cap Y,p)\subset C(X,p)\cap C(Y,p)$;
 \end{itemize}
 \item [(3)] For $X\subset \mathbb{R}^m$, $Y\subset \mathbb{R}^n$ and $(p,q)\in \overline{X}\times \overline{Y}$, we have $C(X\times Y,(p,q))=C(X,p)\times C(Y,p)$;
 \item [(4)] If $X$ is a $C^1$-smooth submanifold of $\R^m$ then $C(X,p)=T_pX$ for all $p\in X$, where $T_pX$ is the tangent space\index{tangent space} of $X$ at $p$;
 \item [(5)] Let $\varphi\colon (\R^n,p)\to (\R^m,\varphi(p))$ be the germ of a mapping which is differentiable at $p$. If $X\subset \mathbb{R}^m$ is a set such that $p\in \overline{X}$ then $D\varphi_p(C(X,p))\subset C(\varphi(Y), \varphi(p))$, where $D\varphi_p$ is the derivative of $\varphi$ at $p$;
 \item [(6)] If $X\subset \R^m$ and $p\in \overline{X}$ then $C(X,p)$ is a closed subset of $\mathbb{R}^m$, $C(X,p)=C(\overline{X},p)$ and $C(X,p)$ is a real cone\index{cone!real}, i.e., for each $v\in C(X,p)$, we have that $\lambda v\in C(X,p)$ for all $\lambda>0$.
\end{itemize} 
\end{proposition}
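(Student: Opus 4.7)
The plan is to handle the six items in turn, most of which reduce to direct manipulations of realizing sequences for tangent vectors. I would dispatch the easy items first. (1) is immediate: a realizing sequence $(x_i,t_i)$ for $v\in C(A,p)$ is automatically one for $v\in C(X,p)$. (2a) combines (1) (for $\supset$) with a pigeonhole argument: any sequence in $X\cup Y$ has a subsequence entirely contained in $X$ or entirely in $Y$, so by extracting a further subsequence the limit lies in $C(X,p)\cup C(Y,q)$. (2b) is two applications of (1). For (5), if $x_i\to p$ in $X$ and $(x_i-p)/t_i\to v$, differentiability of $\varphi$ at $p$ gives $\varphi(x_i)-\varphi(p)=D\varphi_p(x_i-p)+o(|x_i-p|)$; dividing by $t_i$ and using that $|x_i-p|/t_i\to |v|$ stays bounded, the error term tends to $0$, so $(\varphi(x_i)-\varphi(p))/t_i\to D\varphi_p(v)$, whence $D\varphi_p(v)\in C(\varphi(X),\varphi(p))$.

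For (4) I would invoke a local $C^1$ parametrization $\psi\colon(\R^d,0)\to(X,p)$ with $D\psi_0(\R^d)=T_pX$, apply (5) to $\psi$ to get $T_pX\subset C(X,p)$, and apply (5) to a local $C^1$ left-inverse of $\psi$ to get the reverse inclusion. Item (6) splits into three parts. The real cone property is immediate: replacing $t_i$ by $t_i/\lambda$ sends the realized tangent vector $v$ to $\lambda v$. Closedness of $C(X,p)$ and the equality $C(X,p)=C(\overline{X},p)$ both reduce to the same diagonal trick; one direction of the latter follows from (1), and for the nontrivial direction one approximates each point $\tilde x\in\overline X$ by points of $X$ and diagonalizes the resulting double-indexed family, being careful to control simultaneously both the spatial error $\|x_i-p\|$ and the directional error $\|(x_i-p)/t_i - v\|$.

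The real work is in (3). The inclusion $C(X\times Y,(p,q))\subset C(X,p)\times C(Y,q)$ is immediate by projecting a common realizing sequence onto each factor. The reverse inclusion is the step I expect to be the main obstacle: given $v\in C(X,p)$ via $(x_i,t_i)$ and $w\in C(Y,q)$ via $(y_j,s_j)$, one must synthesize a \emph{single} realizing sequence $(x'_k,y'_k,r_k)$ for $(v,w)$, and the two input sequences a priori have unrelated scales. The degenerate cases ($v=0$ or $w=0$) are handled by taking the constant sequence $x'_k=p$ (respectively $y'_k=q$) paired with the realizing sequence for the other component. For $v,w$ both nonzero, the plan is to take $r_k=t_k$ and then select $y'_k\in Y$ close to $q+r_k w$, which requires producing points of $Y$ at every sufficiently small scale along the direction $w$. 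For arbitrary sets this step can fail, so in the setting of the paper one passes through a curve selection-type argument (or a $C^1$ parametrization of a one-dimensional piece of $\overline Y$ tangent to $w$) to obtain a continuous family of points of $Y$ realizing the direction $w$ at every small scale, and then matches the scales with those of the $(x_i,t_i)$.
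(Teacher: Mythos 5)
The paper gives no argument for this proposition (it is explicitly left as an exercise), so there is nothing to compare your proof against; judged on its own, your plan is essentially correct and you have put the emphasis in the right place. Items (1), (2), (4), (5) and (6) are handled by exactly the standard manipulations you describe, and your treatment of the error term in (5) (bounding $o(\|x_i-p\|)/t_i$ by $\frac{o(\|x_i-p\|)}{\|x_i-p\|}\cdot\frac{\|x_i-p\|}{t_i}$, with the second factor bounded) is the one delicate point there and you get it right.

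Two remarks on item (3). First, you are right that the reverse inclusion genuinely fails for arbitrary sets, and it is worth recording a counterexample: take $X=\{4^{-k}\}\cup\{0\}$ and $Y=\{2\cdot 4^{-k}\}\cup\{0\}$ in $\R$, with $p=q=0$. Both tangent cones equal $[0,+\infty)$, but $(1,1)\in C(X\times Y,(0,0))$ would force $4^{-a_k}/t_k\to 1$ and $2\cdot 4^{-b_k}/t_k\to 1$, hence $4^{b_k-a_k}\to 2$, which is impossible for integers $b_k-a_k$. So the statement must be read with the paper's standing subanalyticity hypothesis, and your fix --- replacing the realizing sequence for $w$ by a continuous subanalytic arc $\gamma(t)=q+tw+o(t)$ with $\gamma((0,\varepsilon))\subset Y$ (this is exactly Proposition \ref{selection_lemma_cone}) and then sampling it at the scales $t_i$ coming from the realizing sequence of $v$ --- is the correct repair. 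Second, a small slip in your degenerate case: if $w=0$ you propose the constant sequence $y'_k=q$, but $q$ is only assumed to lie in $\overline{Y}$, not in $Y$; instead choose $y'_k\in Y$ with $\|y'_k-q\|\le t_k/k$, which exists since $q\in\overline{Y}$ and still gives $(y'_k-q)/t_k\to 0$. (The same caveat applies to $x'_k=p$ when $v=0$.) With these adjustments the argument is complete. Note also that the paper's statement of (3) contains the typo $C(X,p)\times C(Y,p)$ for $C(X,p)\times C(Y,q)$, which you have silently and correctly corrected.
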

\begin{proof}
The proof is left as an exercise for the reader.
\end{proof}

Note that the tangent cones\index{tangent cones!are not topological invariant} are not topological invariant in the sense that $(X,p)$ can be topological equivalent to $(Y,q)$, but $C(X,p)$ and $C(Y,q)$ are not homeomorphic . Indeed, the real cusp $X=\{(x,y)\in \R^2; y^3=x^2\}$ is homeomorphic to the real line $L=\{(x,y)\in \R^2; y=0\}$, but $C(X,0)=\{(0,y)\in \R^2; y\geq 0\}$ is not homeomorphic to $C(L,p)=L$, for any $p\in L$. The main result of this section is to prove that the tangent cones are bi-Lipschitz invariant\index{tangent cones!are bi-Lipschitz invariant}.

Another way to present the tangent cone of a subset $X\subset\R^m$ at the origin $0\in\R^m$ is via the spherical blow-up of $\R^m$ at the point $0$ as it is going to be done in the following: let us consider the {\bf spherical blowing-up}\index{spherical blowing-up} at the origin of $\R^m$
$$
\begin{array}{ccl}
\rho_m\colon\mathbb{S}^{m-1}\times [0,+\infty )&\longrightarrow & \R^m\\
(x,r)&\longmapsto &rx.
\end{array}
$$

Notice that $\rho_m\colon\mathbb{S}^{m-1}\times (0,+\infty )\to \R^m\setminus \{0\}$ is a homeomorphism with inverse map\index{map!inverse} $\rho_m^{-1}\colon\R^m\setminus \{0\}\to \mathbb{S}^{m-1}\times (0,+\infty )$ given by $\rho_m^{-1}(x)=(\frac{x}{\|x\|},\|x\|)$. Let us denote
$$X':=\overline{\rho_m^{-1}(X\setminus \{0\})} \mbox{ and } \partial X':=X'\cap (\mathbb{S}^{m-1}\times \{0\}).$$

\begin{proposition}\label{tg:exer:link_tg__divisor_bs}
If $X\subset \R^m$ is a subanalytic set and $0\in X$, then $\partial X'=\mathbb{S}_0X\times \{0\}$, where $\mathbb{S}_0X=C(X,0)\cap \mathbb{S}^{m-1}$.
\end{proposition}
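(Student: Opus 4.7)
The plan is to prove set equality in $\mathbb{S}^{m-1}\times\{0\}$ by establishing both inclusions; the subanalytic hypothesis is not actually needed here, so I will give a proof valid for arbitrary $X\subset\R^m$ with $0\in X$. The core point is that the map $\rho_m^{-1}$ sends a non-zero point $x$ to $(x/\|x\|,\|x\|)$, so convergence in $\mathbb{S}^{m-1}\times[0,+\infty)$ to a boundary point $(v,0)$ encodes exactly the data of a secant direction $x_i/\|x_i\|\to v$ with $\|x_i\|\to 0$.

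For the inclusion $\partial X'\subset \mathbb{S}_0X\times\{0\}$, suppose $(v,0)\in\partial X'$. Then, since $\rho_m^{-1}(X\setminus\{0\})$ is a subset of $\mathbb{S}^{m-1}\times(0,+\infty)$, there exists a sequence $x_i\in X\setminus\{0\}$ with $\rho_m^{-1}(x_i)=(x_i/\|x_i\|,\|x_i\|)\to (v,0)$. Setting $t_i=\|x_i\|\to 0^{+}$, one has $(x_i-0)/t_i\to v$, so $v\in C(X,0)$, and clearly $\|v\|=1$ because $v$ is a limit of unit vectors; hence $v\in\mathbb{S}_0X$.

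For the reverse inclusion $\mathbb{S}_0X\times\{0\}\subset\partial X'$, let $v\in C(X,0)$ with $\|v\|=1$. By definition there are sequences $x_i\in X$ with $x_i\to 0$ and $t_i>0$ with $x_i/t_i\to v$. Since $v\neq 0$, we may drop finitely many terms and assume $x_i\neq 0$ for all $i$. Taking norms gives $\|x_i\|/t_i\to \|v\|=1$, from which
\[
\frac{x_i}{\|x_i\|}=\frac{x_i}{t_i}\cdot\frac{t_i}{\|x_i\|}\longrightarrow v\cdot 1=v,
\]
while $\|x_i\|\to 0$ because $x_i\to 0$. Therefore $\rho_m^{-1}(x_i)=(x_i/\|x_i\|,\|x_i\|)\to(v,0)$, which shows $(v,0)\in\overline{\rho_m^{-1}(X\setminus\{0\})}=X'$, and so $(v,0)\in\partial X'$.

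There is no real obstacle: the only subtle step is the renormalization $t_i\rightsquigarrow\|x_i\|$ in the second inclusion, which is precisely where the assumption $\|v\|=1$ enters (so that the parameters $t_i$ along which the tangent direction is realized can be replaced by the actual norms, identifying the limit with a boundary point of the spherical blow-up). Everything else is a direct unravelling of the definitions of $\rho_m$, of $C(X,0)$, and of $X'$.
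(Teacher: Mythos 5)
Your proof is correct; the paper leaves this proposition as an exercise, so there is no official proof to compare against, but your argument is exactly the intended unravelling of the definitions of $\rho_m$, $X'$ and $C(X,0)$. Both inclusions are handled cleanly, and the one genuinely delicate point — replacing the parameters $t_i$ by the norms $\|x_i\|$ using $\|x_i\|/t_i\to\|v\|=1$ — is done correctly. Your observation that subanalyticity is not needed for this particular set-theoretic identity is also accurate (it is only used elsewhere, e.g.\ via the Curve Selection Lemma, to get finer structure on $\partial X'$).
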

\begin{proof}
The proof is left as an exercise for the reader.
\end{proof}

\subsection{Characterization of tangent cones}

In this section, we present one more way to consider tangent vectors of subanalytic sets. More precisely, the following proposition and its corollaries give us nice characterizations\index{characterization} of tangent vector of subanalytic sets $X$ in terms of velocity of arcs in $X$. 

We start by recalling the following fundamental result in real algebraic geometry (see \cite[Lemma 6.3]{BierstoneM:1988}):
\begin{lemma}[Curve Selection Lemma]\label{selection_curve}\index{Curve Selection Lemma}
Let $X$ be a subanalytic subset of $\R^n$ and $x\in\R^n$ being a non-isolated point of $\overline{X}$. Then, there
exist $\delta>0$ and an analytic map $\gamma\colon (-\delta,\delta)\to \R^n$ such that $\gamma(0)=x$ and $\gamma((0,\delta))\subset X$.
\end{lemma}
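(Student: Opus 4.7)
The plan is to first establish the result for semianalytic sets and then bootstrap it to the subanalytic case via the very definition of subanalyticity.

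For the semianalytic case, I would appeal to \L ojasiewicz's structure theorem: any semianalytic germ at $x$ decomposes as a finite union of basic sets of the form
\[
\{y : f_1(y) = \cdots = f_p(y) = 0,\ g_1(y) > 0, \ldots, g_q(y) > 0\},
\]
where the $f_i, g_j$ are real analytic in a neighborhood of $x$. It suffices to produce the arc for one such basic piece whose closure contains $x$ as a non-isolated point. Then I would apply Hironaka's resolution of singularities (or uniformization) to the analytic variety $V = \{f_1 = \cdots = f_p = 0\}$, producing a proper analytic map $\pi \colon \tilde V \to V$ with $\tilde V$ a real analytic manifold, and, after possibly further blow-ups, such that each pullback $g_j \circ \pi$ is a monomial times a nowhere-vanishing analytic factor in suitable local coordinates (normal crossings). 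Choose $\tilde x \in \pi^{-1}(x)$ lying in the closure of $\pi^{-1}(X)$; such a point exists because $\pi$ is proper and $x$ is non-isolated in $\overline X$. In local analytic coordinates at $\tilde x$, the set where all $g_j \circ \pi$ are positive is a union of coordinate sectors, and at least one such sector accumulates at $\tilde x$. Inside it, an explicit analytic arc of monomial type $t \mapsto (\pm t^{a_1}, \ldots, \pm t^{a_d})$ will be contained in $\pi^{-1}(X)$ for small $t > 0$, and its push-forward $\gamma = \pi \circ \tilde\gamma$ is the required analytic arc in $X$.

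For the subanalytic case, by definition there exist a relatively compact semianalytic set $Z \subset \R^{n+k}$ and a proper analytic map $\varphi \colon Z \to \R^n$ such that $X = \varphi(Z)$ in a neighborhood of $x$. Taking a sequence $x_i \in X \setminus \{x\}$ converging to $x$, compactness provides preimages $z_i$ with $\varphi(z_i) = x_i$ subconverging to some $z \in \overline Z$ with $\varphi(z) = x$, and $z$ is non-isolated in $\overline Z$. Apply the semianalytic curve selection lemma to $Z$ at $z$ to obtain an analytic arc $\tilde\gamma \colon (-\delta,\delta) \to \R^{n+k}$ with $\tilde\gamma(0) = z$ and $\tilde\gamma((0,\delta)) \subset Z$. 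Then $\gamma := \varphi \circ \tilde\gamma$ is analytic, $\gamma(0) = x$, and $\gamma((0,\delta)) \subset X$, as required.

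The main obstacle is the semianalytic step, and within it the control of the inequality-defining functions $g_j$ after resolution: one must arrange that the $g_j \circ \pi$ become monomial (or monomial times a unit) so that the positivity region is described by the sign of finitely many coordinate monomials, after which the arc is essentially forced. The subanalytic-to-semianalytic reduction, by contrast, is a routine compactness argument once properness of $\varphi$ is used.
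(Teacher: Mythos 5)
The paper does not actually prove this lemma: it is quoted verbatim from \cite{BierstoneM:1988} (their Lemma 6.3), so there is no internal argument to compare yours against. Your sketch is, in outline, the classical \L ojasiewicz--Hironaka proof that one finds in that reference and its relatives: reduce to a basic semianalytic piece, uniformize and monomialize so that the inequality data become normal crossings, pick a compatible quadrant, push forward a monomial arc. The logic is sound, and the two reductions you call routine really are routine; I would only tighten three small points. First, in both reductions you silently pass from a finite union to a single piece: a subanalytic germ is a \emph{finite union} of proper projections of relatively compact semianalytic sets, and a semianalytic germ is a \emph{finite union} of basic sets, so in each case you must note that since $x$ is non-isolated in $\overline X$ there are points of $X\setminus\{x\}$ accumulating at $x$, hence at least one piece of the finite union already accumulates at $x$; you say this for the basic sets but not for the subanalytic presentation. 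Second, after monomialization the relevant observation is slightly cleaner than ``at least one sector accumulates at $\tilde x$'': in coordinates centred at $\tilde x$ every nonempty compatible quadrant is an open cone with vertex $\tilde x$, so the mere nonemptiness of $\pi^{-1}(B)$ arbitrarily close to $\tilde x$ (guaranteed by properness of $\pi$ and the choice of $\tilde x$ as a limit of lifts) already forces a compatible sign vector, and the diagonal arc $t\mapsto(\epsilon_1t,\dots,\epsilon_dt)$ in that quadrant does the job; no exponents $a_k>1$ are needed. Third, the pushed-forward arc $\pi\circ\tilde\gamma$ (or $\varphi\circ\tilde\gamma$) could in principle be constant, but since $\tilde\gamma((0,\delta))$ lies in $\pi^{-1}(B)$ its image still lies in $X$, so the lemma as stated is satisfied even in that degenerate case; if one wants the sharper conclusion $\gamma(t)\neq x$ for $t>0$ (which is what Proposition \ref{selection_lemma_cone} implicitly uses), one simply applies the same argument to $X\setminus\{x\}$.
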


\begin{proposition}\label{selection_lemma_cone}
Let $Z\subset \R^m$ be a subanalytic set with $p\in \overline{Z\setminus \{p\}}$. A vector $v\in\R^m$ is a tangent vector\index{tangent vector} of $Z$ at $p$ if and only if there exists a continuous subanalytic arc $\gamma\colon [0,\varepsilon )\to \overline{Z}$ such that $\gamma((0,\varepsilon))\subset Z$ and $\gamma(t)-p=tv+o(t),$ where $g(t)=o(t)$ means $\lim\limits _{t\to 0^+ }\frac{g(t)}{t}=0$. 
\end{proposition}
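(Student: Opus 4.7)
The plan is to prove the two implications. The $(\Leftarrow)$ direction is straightforward: given such an arc $\gamma$, pick any sequence $t_i \to 0^+$ and set $x_i = \gamma(t_i) \in Z$ (for $t_i$ small); then $\frac{1}{t_i}(x_i - p) = v + \frac{o(t_i)}{t_i} \to v$, witnessing that $v \in C(Z,p)$. All of the work lies in $(\Rightarrow)$, which I would split into the trivial case $v=0$ and the main case $v\neq 0$.

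For the main case $v \neq 0$, I would use the spherical blow-up to move the problem to a space where the direction $v$ is visible. Set $u = v/\|v\|$ and consider the subanalytic set
$$W = \left\{\left(\tfrac{x-p}{\|x-p\|},\ \|x-p\|\right) : x \in Z\setminus\{p\}\right\} \subset \mathbb{S}^{m-1}\times(0,\infty).$$
If $x_i \to p$ in $Z$ and $t_i>0$ with $(x_i-p)/t_i \to v$, then $\|x_i-p\|/t_i \to \|v\|$ and $(x_i-p)/\|x_i-p\|\to u$, so $(u,0)\in\overline{W}$. Because $(u,0)$ is a non-isolated point of $\overline{W}$, the Curve Selection Lemma (Lemma \ref{selection_curve}) yields a continuous subanalytic arc $\alpha(s)=(\omega(s),r(s))$, $s\in[0,\delta)$, with $\alpha(0)=(u,0)$ and $\alpha((0,\delta))\subset W$. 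Then $\tilde\gamma(s):=p+r(s)\omega(s)$ is a continuous subanalytic arc in $\overline{Z}$ with $\tilde\gamma((0,\delta))\subset Z$ and $\tilde\gamma(0)=p$.

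To get the precise asymptotics $\gamma(t)-p=tv+o(t)$, I would reparametrize. Because $r$ is a subanalytic function of one real variable with $r(0)=0$ and $r(s)>0$ for $s>0$, its Puiseux expansion $r(s)=a s^{\beta}+o(s^{\beta})$ (with $a>0$, $\beta>0$ rational) shows $r$ is strictly increasing near $0$, so the equation $t=r(s)/\|v\|$ has a subanalytic continuous inverse $s=\sigma(t)$. Setting $\gamma(t):=\tilde\gamma(\sigma(t))$ gives
$$\gamma(t)-p = r(\sigma(t))\omega(\sigma(t)) = t\|v\|u + t\|v\|\bigl(\omega(\sigma(t))-u\bigr) = tv + o(t),$$
since $\omega(\sigma(t))\to u$ as $t\to 0^+$.

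For $v=0$, I would simply apply the Curve Selection Lemma directly to $Z\setminus\{p\}$ at $p$ to obtain a continuous subanalytic arc $\tilde\gamma\colon[0,\delta)\to\overline Z$ with $\tilde\gamma(0)=p$ and $\tilde\gamma((0,\delta))\subset Z$, expand $\|\tilde\gamma(s)-p\|=as^{k}+o(s^{k})$ by Puiseux, and reparametrize by $\gamma(t):=\tilde\gamma(t^{N})$ for an integer $N$ with $Nk>1$, so that $\|\gamma(t)-p\|=O(t^{Nk})=o(t)=tv+o(t)$. The only delicate step I anticipate is the reparametrization in the $v\neq 0$ case: one must confirm that the Puiseux-type expansion of a subanalytic function of one variable indeed forces strict monotonicity of $r$ near $0$ and that the inverse $\sigma$ remains subanalytic, so that the final arc $\gamma$ is both continuous and subanalytic. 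Everything else — the subanalyticity of $W$, applicability of the Curve Selection Lemma, and the $o(t)$ estimate — should follow routinely.
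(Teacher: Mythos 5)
Your proposal is correct and follows essentially the same route as the paper: both directions are handled identically, with the forward implication obtained by passing to the spherical blow-up, applying the Curve Selection Lemma at the point $(v/\|v\|,0)$, and reparametrizing via the (strictly monotone near $0$) radial coordinate; the paper justifies the monotonicity by analyticity of the arc furnished by the lemma, which is the same content as your Puiseux/monotonicity remark. The $v=0$ case is also handled the same way, by a reparametrization $t\mapsto t^{N}$ that kills the linear term.
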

\begin{proof}
Without loss of generality, we assume $p=0$. Thus, $Y=\rho_m^{-1}(X\setminus \{0\})\subset \mathbb{S}^{m-1}\times [0,+\infty )$ and $\overline{Y}$ are subanalytic sets. We are going to consider two cases:

\bigskip

\noindent 1) Case $v\not=0$. Since $v$ is a tangent vector of $Z$ at $0$, there are a sequence $\{s_k\}_{k\in \N}$ of positive real numbers and a sequence $\{z_k\}_{k\in \N}\subset Z$ such that $\lim\limits _{k\to +\infty }z_k=0 $ and $\lim\limits _{k\to +\infty }\frac{1}{s_k}z_k=v$. In particular, $\lim\limits _{k\to \infty } \frac{z_k}{\|z_k\|}=\frac{v}{\|v\|}$ and $u=(\frac{v}{\|v\|},0)\in \overline{Y}$. Then by Curve Selection Lemma (Lemma \ref{selection_curve})\index{Curve Selection Lemma}, there exists an analytic arc $\beta\colon (-\delta,\delta)\to \mathbb{S}^{n-1}\times \R$ such that $\beta(0)=u$ and $\beta((0,\delta ))\subset Y$. By writing $\beta(t)=(x(t),s(t))$, we have that $s\colon [0,\delta )\to \R$ is an analytic and non-constant function such that $s(0)=0$ and $s(t)>0$ if $t\in (0,\delta )$. By analyticity of $s'$, one can suppose that $s$ is strictly increasing in the domain $[0,\delta)$. Hence, $s\colon [0,\delta/2]\to [0,\delta' ]$ is a subanalytic homeomorphism, where $\delta' =s(\frac{\delta}{2})$. We define $\gamma\colon [0,\varepsilon)\to \overline{Z}$ by
$$
\gamma(t)=\rho_m\circ \beta\circ s^{-1}(t\|v\|)=\rho_m(x(s^{-1}(t\|v\|)),s(s^{-1}(t\|v\|)))=t\|v\|x(s^{-1}(t\|v\|)),
$$
where $\varepsilon=\min\{\frac{\delta'}{\|v\|},\delta'\}$. Therefore,
$$
\lim\limits _{t\to 0^+}\frac{\gamma(t)}{t}=\lim\limits _{t\to 0^+}\frac{t\|v\|x(s^{-1}(\|v\|t))}{t}=\lim\limits _{t\to 0^+}\|v\|x(s^{-1}(\|v\|t))=\|v\|x(0)=v,
$$
and thus $\gamma(t)=tv+o(t)$ and $\gamma((0,\varepsilon))\subset Z$. 
Since $\gamma$ is a composition of proper continuous subanalytic maps, it is a continuous subanalytic map as well.

\bigskip

\noindent 2) Case $v=0$. In this case,  let $\{z_k\}_{k\in\N}\subset Z$ be a sequence such that  $\lim\limits _{k\to +\infty }z_k=0 $. Thus, $\{\frac{x_k}{\|x_k\|}\}_{k\in\N}$ is, up to take subsequence, a convergent sequence. Let $v\in  \R^m$ be the limit of this sequence, i.e., $\lim\limits _{k\to \infty }\frac{x_k}{\|x_k\|}=v$. Likewise as it was done in the Case 1), one can show that there exists a continuous subanalytic arc $\gamma\colon [0,\varepsilon)\to \overline{Z}$ such that $\gamma(t)=tv+o(t)$. Let us define $\widetilde\gamma\colon [0,\varepsilon^{\frac{1}{2}})\to \overline{Z}$ by $\widetilde{\gamma}(t)=\gamma(t^{2})$. Thus, we have $\widetilde{\gamma}(t)=o(t)=tv+o(t)$.

\bigskip

Reciprocally, if there exists a continuous subanalytic arc $\gamma\colon [0,\varepsilon)\to \overline{Z}$ such that $\gamma(t)=tv+o(t)$ and $\gamma((0,\varepsilon))\subset Z$, then for each $k\in \N$ we define $s_k=\frac{\varepsilon}{k+2}$ and $z_k=\gamma(s_k)$. Thus, it is clear that $v$ is a tangent vector of $Z$ at $0$, since $\lim\limits _{k\to +\infty } z_k=0 $ and $\lim\limits _{k\to +\infty }\frac{1}{s_k}z_k=v$.
\end{proof}

It is an immediate consequence of Proposition \ref{selection_lemma_cone} the following result:

\begin{corollary}
Let $X$ be a subanalytic set in $\R^m$, $p \in \overline{X\setminus \{p\}}$. Then 
$C(X,p)=\{v\in R^m;$ there exists a continuous subanalytic arc $\gamma\colon [0,\varepsilon )\to \overline{Z}$ such that $\gamma((0,\varepsilon))\subset Z$ and $\gamma(t)-p=tv+o(t)\}$.
\end{corollary}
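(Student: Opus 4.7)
The plan is to observe that this corollary is essentially a reformulation of Proposition \ref{selection_lemma_cone}, so no new argument is needed; all the work has already been absorbed into that proposition. I would simply unpack the two inclusions and cite the proposition for each.

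First, for the inclusion $\subseteq$, I would take $v\in C(X,p)$ and apply the forward direction of Proposition \ref{selection_lemma_cone} (the part proved in Cases 1 and 2 of that argument, using the Curve Selection Lemma on the spherical blow-up) to produce a continuous subanalytic arc $\gamma\colon [0,\varepsilon)\to\overline{X}$ with $\gamma((0,\varepsilon))\subset X$ and $\gamma(t)-p=tv+o(t)$. This shows $v$ belongs to the right-hand side.

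Next, for the inclusion $\supseteq$, I would take $v$ in the right-hand side, so there exists a continuous subanalytic arc $\gamma\colon [0,\varepsilon)\to \overline{X}$ with $\gamma((0,\varepsilon))\subset X$ and $\gamma(t)-p=tv+o(t)$. Applying the converse direction of Proposition \ref{selection_lemma_cone} (the last paragraph of its proof, obtained by choosing $s_k=\varepsilon/(k+2)$ and $z_k=\gamma(s_k)$) gives sequences $\{z_k\}\subset X$ tending to $p$ and positive $\{s_k\}\to 0$ with $s_k^{-1}(z_k-p)\to v$, so $v\in C(X,p)$ by the very definition of the tangent cone.

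Since both inclusions follow from the two directions of Proposition \ref{selection_lemma_cone} applied to $Z=X$, the equality of sets is immediate and there is no real obstacle; the only care required is to note that the statement is phrased for a generic point $p$ rather than $p=0$, but the reduction to $p=0$ used in the proposition is harmless because translation by $p$ is an analytic (hence subanalytic) diffeomorphism that preserves both the class of subanalytic arcs and the expansion $\gamma(t)-p=tv+o(t)$.
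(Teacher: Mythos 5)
Your proposal is correct and matches the paper exactly: the paper states this corollary as an immediate consequence of Proposition \ref{selection_lemma_cone}, with both inclusions following from the two directions of that proposition applied to $Z=X$ (the $Z$ in the corollary's statement being a typo for $X$). Your additional remark on translating $p$ to the origin is a harmless and accurate clarification.
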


\begin{proposition}\label{tg:exer:norm_tg_curve}
 Let $X$ be a subanalytic set in $\R^m$, $p \in \overline{X}$. If $v\in C(X,p)\setminus \{0\}$ then we can take a continuous subanalytic arc $\gamma\colon [0,\varepsilon )\to \overline{X}$ such that $\gamma((0,\varepsilon))\subset X$, $\gamma(t)-p=tv+o(t)$ and, moreover, satisfies anyone of the following conditions: 
 \begin{itemize}
  \item [(i)] $\|\gamma(t)-p\|<t\|v\|$; 
  \item [(ii)] $\|\gamma(t)-p\|=t\|v\|$; 
  \item [(iii)] $\|\gamma(t)-p\|>t\|v\|$, for all $t\in (0,\varepsilon)$.
 \end{itemize}
\end{proposition}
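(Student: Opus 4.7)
The strategy is to first build a ``unit-speed'' base arc $\gamma_{0}$ realizing (ii) by reparametrizing the arc supplied by Proposition \ref{selection_lemma_cone} by its own distance to $p$, and then derive separate arcs for (i) and (iii) from $\gamma_{0}$ by the elementary quadratic time-changes $t\mapsto t-t^{2}$ and $t\mapsto t+t^{2}$, respectively.

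By Proposition \ref{selection_lemma_cone}, fix a continuous subanalytic arc $\alpha\colon [0,\delta)\to\overline{X}$ with $\alpha((0,\delta))\subset X$ and $\alpha(s)-p=sv+o(s)$, and set $\psi(s):=\|\alpha(s)-p\|/\|v\|$. Then $\psi$ is continuous, subanalytic, non-negative, and satisfies $\psi(s)=s+o(s)$. After shrinking $\delta$, the Puiseux-type asymptotic expansion for continuous subanalytic functions of one variable (equivalently, the o-minimal monotonicity theorem) guarantees that $\psi\colon[0,\delta)\to[0,\eta)$ is a strictly increasing subanalytic homeomorphism whose inverse $\varphi$ is continuous, subanalytic, and satisfies $\varphi(t)=t+o(t)$. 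Define $\gamma_{0}(t):=\alpha(\varphi(t))$ on $[0,\eta)$. By construction $\gamma_{0}$ is a continuous subanalytic arc in $\overline{X}$ with $\gamma_{0}((0,\eta))\subset X$, $\|\gamma_{0}(t)-p\|=\|v\|\,\psi(\varphi(t))=t\|v\|$, and $\gamma_{0}(t)-p=\varphi(t)v+o(\varphi(t))=tv+o(t)$, which realizes (ii).

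For (i), set $\gamma(t):=\gamma_{0}(t-t^{2})$ on a small interval $(0,\varepsilon)$ with $\gamma(0):=p$, taking $\varepsilon$ small enough that $t-t^{2}\in(0,\eta)$. Then $\gamma$ is a composition of continuous subanalytic maps and lies in $\overline{X}$, with $\gamma((0,\varepsilon))\subset X$, and
\[
\gamma(t)-p=(t-t^{2})v+o(t-t^{2})=tv+o(t),\qquad \|\gamma(t)-p\|=(t-t^{2})\|v\|<t\|v\|.
\]
For (iii), put $\gamma(t):=\gamma_{0}(t+t^{2})$ on a small $(0,\varepsilon)$ with $t+t^{2}<\eta$; the same expansion yields $\gamma(t)-p=tv+o(t)$ while $\|\gamma(t)-p\|=(t+t^{2})\|v\|>t\|v\|$.

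The main obstacle is the reparametrization step producing the base arc for (ii): one must know that $\psi$ is eventually strictly increasing and admits a continuous subanalytic inverse. This is where subanalyticity is used essentially, through the Puiseux-type expansion whose leading monomial is forced by $\psi(s)=s+o(s)$. Once $\gamma_{0}$ is constructed, cases (i) and (iii) follow automatically from the sign-asymmetric reparametrizations $t\mapsto t\mp t^{2}$, which preserve the first-order behaviour but rigidly dictate the prescribed strict inequality on the distance to $p$.
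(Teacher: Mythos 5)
Your proof is correct. The paper leaves this proposition as an exercise, so there is no in-paper argument to compare against; your construction is the natural one and mirrors the technique already used in the paper's proof of Proposition \ref{selection_lemma_cone}, where an arc is reparametrized via a strictly increasing subanalytic function of one variable (there $s(t)$, here the normalized distance $\psi(s)=\|\alpha(s)-p\|/\|v\|$). All the key points check out: $\psi(s)=s+o(s)$ forces strict monotonicity near $0$ by the monotonicity/Puiseux theorem, the inverse of a subanalytic homeomorphism of bounded intervals is subanalytic, the unit-speed arc $\gamma_0$ realizes (ii), and the time-changes $t\mapsto t\mp t^2$ preserve $\gamma(t)-p=tv+o(t)$ while forcing the strict inequalities in (i) and (iii).
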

\begin{proof}
The proof is left as an exercise for the reader.
\end{proof}

\subsection{Bi-Lipschitz invariance of tangent cones}

The main goal of this section is to show the so-called bi-Lipschitz invariance of the tangent cones of subanalytic sets in the following sense: if $(X,p)$ is bi-Lipschitz equivalent to $(Y,q)$, then $C(X,p)$ is bi-Lipschitz homeomorphic to $C(Y,q)$.

\begin{lemma}[McShane-Whitney-Kirszbraun's Theorem \cite{Mcshane:1934}, \cite{Whitney:1934} and \cite{Kirszbraun:1934}]\label{mcshane}\index{Theorem!McShane-Whitney-Kirszbraun's}
Let $h\colon X\subset \R^n\to \R^m$ be a Lipschitz mapping. Then there exists a Lipschitz mapping $H\colon \R^n\to \R^m$ such that $H|_X=h$.
\end{lemma}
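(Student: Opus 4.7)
The plan is to reduce to the scalar-valued case $m=1$ and extend by an explicit infimum formula. Writing $h = (h_1, \ldots, h_m)$ with $h_i \colon X \to \R$, each component $h_i$ is $L$-Lipschitz (for the same constant $L$ that $h$ enjoys) since $|h_i(x)-h_i(y)| \leq \|h(x)-h(y)\| \leq L\|x-y\|$. If each $h_i$ admits a Lipschitz extension $H_i \colon \R^n \to \R$, then $H := (H_1, \ldots, H_m)$ is a Lipschitz extension of $h$, with constant at most $\sqrt{m}\,L$. So the only substantive task is the scalar case.

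For $m=1$, I would set
$$H(x) := \inf_{y \in X}\bigl\{h(y) + L\|x-y\|\bigr\}, \qquad x \in \R^n.$$
Fixing any $y_0 \in X$, the Lipschitz bound on $h$ yields $h(y) + L\|x-y\| \geq h(y_0) - L\|x-y_0\|$ for every $y \in X$, so the infimum is finite and $H$ is well defined. For $x \in X$, choosing $y=x$ gives $H(x) \leq h(x)$, while the Lipschitz bound on $h$ forces $h(y)+L\|x-y\| \geq h(x)$, so $H(x) = h(x)$; hence $H|_X = h$. Finally, for $x_1,x_2 \in \R^n$ and any $y \in X$, the triangle inequality gives $h(y) + L\|x_1-y\| \leq h(y) + L\|x_2-y\| + L\|x_1-x_2\|$; taking the infimum over $y$ and using symmetry produces $|H(x_1) - H(x_2)| \leq L\|x_1-x_2\|$.

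I do not anticipate a real obstacle here: the construction is the classical McShane--Whitney one, and the coordinate-wise passage to $\R^m$ introduces only a harmless factor of $\sqrt{m}$ in the Lipschitz constant. The subtler point, not needed for the statement as given, is Kirszbraun's refinement that the sharp constant $L$ can be preserved even when $m\geq 2$; this requires a Zorn's lemma argument reducing the problem to extending to one new point at a time, together with Helly's theorem on the nonempty intersection of a finite family of closed balls in $\R^m$. Since the lemma only asks for the existence of \emph{some} Lipschitz extension, the elementary argument above suffices.
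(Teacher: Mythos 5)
Your proof is correct and follows essentially the same route as the paper: reduce to the scalar case and extend via the McShane infimum formula $H(x)=\inf_{y\in X}\{h(y)+L\|x-y\|\}$. You additionally spell out the coordinate-wise reduction to $m=1$ and verify finiteness of the infimum, and your triangle-inequality argument for the Lipschitz bound is a slightly cleaner version of the paper's $\varepsilon$-approximation step.
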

\begin{proof}
It is enough to consider the case that $X$ is a closed subset and $m=1$.
Let $C>0$ be a constant such that $|h(x)-h(y)|\leq C\|x-y\|$ for all $x,y\in X$. We define $H\colon \R^n\to \R$ by $H(x)=\inf \{h(y)+C\|x-y\|;y\in X\}$. For $x\in X$, we have that $h(x)-h(y)\leq \|h(x)-h(y)\|\leq C\|x-y\|$, and thus $H(x)=h(x)$. Given $u,v\in \R^n$, for each $\varepsilon>0$, let $x_0,y_0\in X$ such that $H(u)>h(x_0)+C\|x_0-u\|-\varepsilon$ and $H(v)>h(x_0)+C\|x_0-v\|-\varepsilon$. Thus,
$$
H(u)-H(v)\leq h(y_0)+C\|y_0-u\|-h(y_0)-C\|y_0-v\|+\varepsilon\leq C\|u-v\|+\varepsilon
$$
and 
$$
H(v)-H(u)\leq h(x_0)+C\|x_0-v\|-h(x_0)-C\|x_0-u\|+\varepsilon\leq C\|u-v\|+\varepsilon.
$$
Finally, by taking $\varepsilon\to 0^+$, we get $|H(u)-H(v)|\leq C\|u-v\|$, which finishes the proof.
\end{proof}

Now we can state and prove the main result of this section. 

\begin{theorem}[Theorem of the bi-Lipschitz invariance of the tangent cones]\label{tg:thm:equivcone}\index{Theorem!of the bi-Lipschitz invariance of the tangent cones}
Let $X\subset \R^n$ and $Y\subset \R^m$ be subanalytic sets. If there are constants $C_1,C_2>0$ and a bi-Lipschitz homeomorphism $\phi\colon (X,x_0)\to (Y,y_0)$ such that 
$$
\frac{1}{C_1}\|x-y\|\leq \|\phi(x)-\phi(y)\|\leq C_2\|x-y\|, \quad \forall x,y \in X,
$$ 
then there is a global bi-Lipschitz homeomorphism\index{homeomorphism!global bi-Lipschitz} $d\phi\colon C(X,x_0)\to C(Y,y_0)$ such that $d\phi(0)=0$ and 
$$
\frac{1}{C_1}\|x-y\|\leq \|d\phi(x)-d\phi(y)\|\leq C_2\|x-y\|, \quad \forall x,y \in C(X,x_0).
$$
\end{theorem}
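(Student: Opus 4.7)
The plan is to construct $d\phi$ on $C(X,x_0)$ directly from the arc characterization of tangent vectors given by Proposition \ref{selection_lemma_cone}. After reducing to $x_0=0$ and $y_0=0$ by translation, I would proceed as follows: for each nonzero $v\in C(X,0)$, Proposition \ref{selection_lemma_cone} provides a continuous subanalytic arc $\gamma_v\colon[0,\varepsilon_v)\to\overline{X}$ with $\gamma_v((0,\varepsilon_v))\subset X$, $\gamma_v(0)=0$, and $\gamma_v(t)=tv+o(t)$. I would then set
$$d\phi(v):=\lim_{t\to 0^+}\frac{\phi(\gamma_v(t))}{t}, \qquad d\phi(0):=0.$$
The Lipschitz bound $\|\phi(\gamma_v(t))\|\leq C_2\|\gamma_v(t)\|=C_2\,t\,\|v\|+o(t)$ shows that any subsequential limit lies in the ball of radius $C_2\|v\|$ and, being a limit of $\phi(\gamma_v(t_k))/t_k$ with $\phi(\gamma_v(t_k))\in Y$, belongs to $C(Y,0)$.

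Two points must then be checked for well-definedness: independence of the choice of arc, and existence of the limit. Independence is immediate: if $\gamma_1,\gamma_2$ both realize $v$, then $\|\gamma_1(t)-\gamma_2(t)\|=o(t)$, and the Lipschitz estimate yields $\|\phi(\gamma_1(t))-\phi(\gamma_2(t))\|\leq C_2\|\gamma_1(t)-\gamma_2(t)\|=o(t)$, so whenever one of the limits exists the other does and they coincide. Existence of the limit is the main obstacle, since a priori $\phi\circ\gamma_v$ is only Lipschitz in $t$ and need not be differentiable at $0$. I would handle it by lifting to the spherical blow-ups $X'$ and $Y'$ of Proposition \ref{tg:exer:link_tg__divisor_bs}, extending $\phi$ and $\phi^{-1}$ to $\Phi\colon\R^n\to\R^m$ and $\Psi\colon\R^m\to\R^n$ by Lemma \ref{mcshane}, and exploiting the symmetric role of $\phi^{-1}$: if two distinct subsequential limits $w,w'$ existed, I would invoke Proposition \ref{selection_lemma_cone} on $Y$ to produce arcs $\beta_w,\beta_{w'}\subset Y$ realizing $w,w'$, pull them back by $\phi^{-1}$, and use the bi-Lipschitz inequalities together with the subanalyticity of $\gamma_v,\beta_w,\beta_{w'}$ to force $w=w'$.

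With $d\phi$ well-defined, the bi-Lipschitz estimates follow by a direct passage to the limit. For $v_1,v_2\in C(X,0)$ with realizing arcs $\gamma_{v_1},\gamma_{v_2}$, one has for every sufficiently small $t>0$
$$\frac{1}{C_1}\|\gamma_{v_1}(t)-\gamma_{v_2}(t)\|\leq\|\phi(\gamma_{v_1}(t))-\phi(\gamma_{v_2}(t))\|\leq C_2\|\gamma_{v_1}(t)-\gamma_{v_2}(t)\|;$$
dividing by $t$ and letting $t\to 0^+$ gives
$$\frac{1}{C_1}\|v_1-v_2\|\leq\|d\phi(v_1)-d\phi(v_2)\|\leq C_2\|v_1-v_2\|.$$
Applying the analogous construction to $\phi^{-1}$ yields $d(\phi^{-1})\colon C(Y,0)\to C(X,0)$; since the image arc $\phi\circ\gamma_v$ realizes $d\phi(v)$, pulling it back by $\phi^{-1}$ recovers $\gamma_v$, so $d(\phi^{-1})\circ d\phi$ is the identity. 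Combined with the estimates above, this makes $d\phi$ a global bi-Lipschitz homeomorphism $C(X,0)\to C(Y,0)$ with $d\phi(0)=0$ and the prescribed constants.
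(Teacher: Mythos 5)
The core of your construction --- defining $d\phi(v):=\lim_{t\to 0^+}\phi(\gamma_v(t))/t$ --- is not well-posed: that limit can genuinely fail to exist, and no argument can force uniqueness of the subsequential limits because counterexamples exist. Take $X=Y=[0,+\infty)\subset\R$ (semialgebraic, hence subanalytic) and $\phi(x)=x\,(2+\sin\log x)$ for $x>0$, $\phi(0)=0$; since $\phi'(x)=2+\sqrt{2}\,\sin(\log x+\pi/4)\in[2-\sqrt{2},\,2+\sqrt{2}]$, the map $\phi$ is bi-Lipschitz, yet for $v=1$ and $\gamma_v(t)=t$ the quotient $\phi(t)/t=2+\sin\log t$ has every point of $[1,3]$ as a subsequential limit. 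The hypotheses make only $X$ and $Y$ subanalytic, not $\phi$, so the subanalyticity of $\gamma_v,\beta_w,\beta_{w'}$ that you invoke in your uniqueness argument cannot rescue the definition: in this example all relevant arcs are real-analytic and the limit still does not exist. Your ``independence of the arc'' step is correct but conditional on existence, which is precisely what fails.

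The paper's proof avoids this by never taking a full limit. It extends $\phi$ and $\phi^{-1}$ to Lipschitz maps of the ambient spaces (McShane--Whitney--Kirszbraun), combines them into a bi-Lipschitz homeomorphism $\varphi$ of $\R^{n+m}$ with $\varphi|_{X}=\phi$, and applies Arzel\`a--Ascoli together with a diagonal argument to the rescalings $\varphi_k(v)=k\varphi(v/k)$ and $\psi_k(w)=k\varphi^{-1}(w/k)$ to extract a single sequence $t_j\to 0$ along which both families converge uniformly on compacta to maps $d\varphi$ and $d\psi$ that are then shown to be mutually inverse. Only after that fixed sequence is in hand are your remaining steps carried out along it: the arc characterization gives $d\varphi(C(X,0))\subset C(Y,0)$, and dividing the bi-Lipschitz inequalities evaluated at $\gamma_{v_1}(t_j),\gamma_{v_2}(t_j)$ by $t_j$ yields the constants $C_1,C_2$. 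If you replace your pointwise limit by a limit along such a common subsequence, the rest of your outline (the estimates and the identification of the inverse) goes through essentially as you wrote it.
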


\begin{proof}
This proof follows closely the proof presented in \cite{Sampaio:2016}. The last part of this proof follows from the ideas presented in the proof of Theorem 3.1 in \cite{Sampaio:2019}.

Let $\phi: X\to Y$ be a bi-Lipschitz homeomorphism. By McShane-Whitney-Kirszbraun's Theorem\index{Theorem!McShane-Whitney-Kirszbraun's}, there exists $\widetilde{\phi}: \R^n\to \R^m$ a Lipschitz map such that $\widetilde{\phi}|_X=\phi$ and $\widetilde{\psi}: \R^m\to \R^n$ another Lipschitz map such that $\widetilde{\psi}|_Y=\phi^{-1}$.
Let us define $\varphi, \psi: \R^n\times \R^m\to \R^n\times \R^m$ as follows:
$$
\varphi(x,y)=(x-\widetilde{\psi}(y+\widetilde{\phi}(x)),y+\widetilde{\phi}(x))
$$
and
$$
\psi(z,w)=(z+\widetilde{\psi}(w), w-\widetilde{\phi}(z+\widetilde{\psi}(w))).
$$
Since $\varphi$ and $\psi$ are composition of Lipschitz maps\index{maps!composition of Lipschitz}, they are also Lipschitz maps. 

Next, we show that $\psi=\varphi^{-1}$.  In fact, if $(x,y)\in \R^n\times \R^m$ then

\begin{eqnarray*}
\psi(\varphi(x,y))&=& \psi(x-\widetilde{\psi}(y+\widetilde{\phi}(x)),y+\widetilde{\phi}(x))\\
				  &=& (x-\widetilde{\psi}(y+\widetilde{\phi}(x))+\widetilde{\psi}(y+\widetilde{\phi}(x)),y+\widetilde{\phi}(x)-\\
				  & &\quad\quad\quad\quad\quad\quad\quad\quad\quad\quad\quad\quad\widetilde{\phi}(x-\widetilde{\psi}(y+\widetilde{\phi}(x))+\widetilde{\psi}(y+\widetilde{\phi}(x)))\\
				  &=& (x,y+\widetilde{\phi}(x)-\widetilde{\phi}(x))\\
				  &=& (x,y),
\end{eqnarray*}
and if $(z,w)\in \R^n\times \R^m$ then
\begin{eqnarray*}
\varphi(\psi(z,w))&=&\varphi(z+\widetilde{\psi}(w), w-\widetilde{\phi}(z+\widetilde{\psi}(w)))\\
				  &=& (z+\widetilde{\psi}(w)-\widetilde{\psi}(w-\widetilde{\phi}(z+\widetilde{\psi}(w))+\widetilde{\phi}(z+\widetilde{\psi}(w))),w-\\
				  & &\quad\quad\quad\quad\quad\quad\quad\quad\quad\quad\quad\quad\quad\quad\widetilde{\phi}(z+\widetilde{\psi}(w))+\widetilde{\phi}(z+\widetilde{\psi}(w)))\\
				  &=& (z+\widetilde{\psi}(w)-\widetilde{\psi}(w),w)\\
				  &=& (z,w).
\end{eqnarray*}

Therefore $\psi=\varphi^{-1}$. Finally, it is clear that  $\varphi(X\times \{0\})=\{0\}\times Y$.

Thus, by doing the identifications $X\leftrightarrow X\times \{0\}$ and $Y\leftrightarrow \{0\}\times Y$, we may assume that $X$ and $Y$ are subsets of same $\R^N$ and there is a bi-Lipschitz homeomorphism\index{homeomorphism!bi-Lipschitz} $\varphi\colon \R^N\to \R^N$ such that $\varphi|_X=\phi$.

Without loss of generality, we assume that $x_0=y_0=0$. Let $K>0$ be a constant such that
\begin{equation}
\frac{1}{K}\|x-y\|\leq \|\varphi(x)-\varphi(y)\|\leq K\|x-y\|, \quad \forall x,y\in \R^N.
\end{equation}

For each $k\in\N$, let us define the maps $\varphi_k,\psi_k:\R^N\to \R^N$ given by $\varphi_k(v)=k\varphi(\frac{1}{k}v)$ and $\psi_k(v)=k\varphi^{-1}(\frac{1}{k}v)$. For each integer $m\geq 1$, let us define $\varphi_{k,m}:=\varphi_k|_{\overline{B}_m}:\overline{B}_m\to \R^N$ and $\psi_{k,m}:=\psi_k|_{\overline{B}_{mK}} :\overline{B}_{mK}\to \R^N$, where $\overline{B}_r$ denotes the Euclidean closed ball\index{ball!Euclidean closed} of radius $r$ and with centre at the origin in $\R^N$.
Since
$$
\frac{1}{K}\|x-y\|\leq \|\varphi_{k,1}(x)-\varphi_{k,1}(y)\|\leq K\|x-y\|, \quad \forall x,y\in \overline{B}_1,\,\, \forall k\in \N
$$
and
$$
\frac{1}{K}\|u-v\|\leq \|\psi_{k,1}(u)-\psi_{k,1}(v)\|\leq K\|u-v\|, \quad u,v\in \overline{B}_K,\,\, \forall k\in \N,
$$
there exist a subsequence $\{k_{j,1}\}_{j\in \N}\subset \N$ and Lipschitz maps $d\varphi_1:\overline{B}_1\to \R^N$ and $d\psi_1: \overline{B}_K\to \R^N$ such that $\varphi_{k_{j,1},1}\to d\varphi_1$ uniformly on $\overline{B}_1$ and $ \psi_{k_{j,1},1}\to d\psi_1$ uniformly on $\overline{B}_K$ (notice that $\{\varphi_{k,1}\}_{k\in \N}$ and $\{\psi_{k,1}\}_{k\in \N}$ have uniform Lipschitz constants)\index{constants!Lipschitz}. Furthermore, it is clear that
$$
\frac{1}{K}\|u-v\|\leq \|d\varphi_1(u)-d\varphi_1(v)\|\leq K\|u-v\|, \quad \forall u,v\in \overline{B}_1
$$
and
$$
\frac{1}{K}\|z-w\|\leq \|d\psi_1(z)-d\psi_1(w)\|\leq K\|z-w\|, \quad \forall z,w\in \overline{B}_K.
$$
Likewise as above, for each $m>1$, we have
$$
\frac{1}{K}\|x-y\|\leq \|\varphi_{k,m}(x)-\varphi_{k,m}(y)\|\leq K\|x-y\|, \quad x,y\in \overline{B}_m,\,\, \forall k\in \N
$$
and
$$
\frac{1}{K}\|u-v\|\leq \|\psi_{k,m}(u)-\psi_{k,m}(v)\|\leq K\|u-v\|, \quad u,v\in \overline{B}_{mK},\,\, \forall k\in \N.
$$
Therefore, for each $m>1$, there exist a subsequence $\{k_{j,m}\}_{j\in \N}\subset\{k_{j,m-1}\}_{j\in \N}$ and Lipschitz maps $d\varphi_m\colon \overline{B}_m\to \R^N$ and $d\psi_m\colon \overline{B}_{mK}\to \R^N$ such that $\varphi_{k_{j,m},m}\to d\varphi_m$ uniformly on $\overline{B}_m$ and $ \psi_{k_{j,m},m}\to d\psi_m$ uniformly on $\overline{B}_{mK}$ with $d\varphi_m|_{\overline{B}_{m-1}}=d\varphi_{m-1}$ and $d\psi_m|_{\overline{B}_{(m-1)K}}=d\psi_{m-1}$. Furthermore,
\begin{equation}\label{diflipum}
\frac{1}{K}\|u-v\|\leq \|d\varphi_m(u)-d\varphi_m(v)\|\leq K\|u-v\|, \quad \forall u,v\in \overline{B}_m
\end{equation}
and
\begin{equation}\label{diflipdois}
\frac{1}{K}\|z-w\|\leq \|d\psi_m(z)-d\psi_m(w)\|\leq K\|z-w\|, \quad \forall z,w\in \overline{B}_{mK}.
\end{equation}

Let us define $d\varphi,d\psi:\R^N\to \R^N$ by $d\varphi(x)=d\varphi_m(x)$, if $x\in \overline{B}_m$ and $d\psi(x)=d\psi_m(x)$, if $x\in \overline{B}_{mK}$ and, for each  $j\in \N$, let $n_j=k_{j,j}$ and $t_j=1/n_j$.

\begin{claim}\label{tg:claim:equivcone:one}
$\varphi_{n_j}\rightarrow d\varphi$ and $\psi_{n_j}\rightarrow d\psi$ uniformly on compact subsets of $\R^N$. 
\end{claim}
\begin{proof}%[Proof of Claim \ref{tg:claim:equivcone:one}]
Let $F\subset \R^N$ be a compact subset. Let us take  $m\in \N$ such that $F\subset \overline{B}_m\subset \overline{B}_{mK}$. Thus, $\{n_j\}_{j>m}$ is a subsequence of $\{k_{j,m}\}_{j\in \N}$ and, since  $\varphi_{k_{j,m},m}\to d\varphi_m$ uniformly on $\overline{B}_m$ and $ \psi_{k_{j,m},m}\to d\psi_m$ uniformly on $\overline{B}_{mK}$, it follows that $\varphi_{n_j}\to d\varphi$ and $\psi_{n_j}\to d\psi$ uniformly on $F$.
\end{proof}

\begin{claim}\label{tg:claim:equivcone:two}
$d\varphi:\R^N\to \R^N$ is a bi-Lipschitz homeomorphism\index{homeomorphism!bi-Lipschitz} and $d\psi= (d\varphi)^{-1}$. 
\end{claim}
\begin{proof}%[Proof of Claim \ref{tg:claim:equivcone:two}]
It follows from inequalities (\ref{diflipum}) and (\ref{diflipdois}) that $d\varphi,d\psi:\R^N\to \R^N$ are Lipschitz mappings. Therefore, it is enough to show that $d\psi= (d\varphi)^{-1}$. In order to do that, let $v\in \R^N$ and $w=d\varphi(v)=\lim \limits_{j\to \infty }\frac{\varphi(t_jv)}{t_j}$. Thus,
$$
\begin{array}{lllll}
\|d\psi(w)-v\|&=&\bigg\|\lim \limits_{j\to \infty }\frac{\psi(t_jw)}{t_j} - v\bigg\| &=&\lim \limits_{j\to \infty }\bigg\|\frac{\psi(t_jw)}{t_j} - \frac{t_jv}{t_j}\bigg\|\\
				    &=&\lim \limits_{j\to \infty }\frac{1}{t_j}\bigg\|\psi(t_jw) - t_jv\bigg\| &=&\lim \limits_{j\to \infty }\frac{1}{t_j}\bigg\|\psi(t_jw) - \psi(\varphi(t_jv))\bigg\|\\
				    &\leq &\lim \limits_{j\to \infty }\frac{K}{t_j}\bigg\|t_jw - \varphi(t_jv)\bigg\| &= &\lim \limits_{j\to \infty }K\bigg\|w - \frac{\varphi(t_jv)}{t_j}\bigg\|\\
					&=&0.& &
\end{array}
$$

Then, $d\psi(w)=d\psi(d\varphi(v))=v$, for all $v\in \R^N$, i.e., $d\psi\circ d\varphi={\rm id}_{\R^N}$. Analogously, one can show that $d\varphi\circ d\psi={\rm id}_{\R^N}$.
\end{proof}

\begin{claim}\label{tg:claim:equivcone:three}
 $d\varphi(C(X,0))= C(Y,0)$.
\end{claim}
\begin{proof}%[Proof of Claim \ref{tg:claim:equivcone:three}]

By the previous claim, it is enough to verify that $d\varphi(C(X,0))\subset C(Y,0)$. In order to do that, let  $v\in C(X,0)$. Then, there is $\alpha\colon [0,\varepsilon)\to X$ such that $\alpha(t)=tv+o(t)$. Thus, $\varphi(\alpha(t))=\varphi(tv)+o(t)$, since $\varphi$ is a Lipschitz map. However, $\varphi(t_jv)=t_jd\varphi(v)+o(t_j)$ and then
$$
d\varphi (v)=\lim \limits_{j\to \infty }\varphi_{n_j}(v)=\lim \limits_{j\to \infty }\frac{\varphi(t_jv)}{t_j}=\lim \limits_{j\to \infty }\frac{\varphi(\alpha(t_j))}{t_j}\in C(Y,0).
$$
\end{proof}

Therefore, $d\varphi\colon C(X,0)\to C(Y,0)$ is a bi-Lipschitz homeomorphism.

\begin{claim}\label{tg:claim:equivcone:four}
 $\frac{1}{C_1}\|v-w\|\leq \|d\varphi(v)-d\varphi(w)\|\leq C_2\|v-w\|, \quad \forall v,w \in C(X,0).$
\end{claim}
\begin{proof}%[Proof of Claim \ref{tg:claim:equivcone:four}]
Let $v\in C(X,0)$. By Proposition \ref{selection_lemma_cone}, there is a curve $\gamma\colon [0,\varepsilon )\to X$  such that $\gamma(t)=tv+o(t)$. Then, we obtain 
$$
\textstyle{\left\|\frac{\varphi(t_jv)}{t_j}-\frac{\varphi(\gamma(t_j))}{t_j}\right\|}=\frac{o(t_j)}{t_j}\to 0 \mbox{ as } j\to +\infty .
$$
Therefore, 
$$\textstyle{\lim\limits _{j\to +\infty}\frac{\varphi(t_jv)}{t_j}=\lim\limits _{j\to +\infty}\frac{\varphi(\gamma(t_j))}{t_j}=d\varphi(v).}$$ 
As $\varphi|_{X\times \{0\}}=0\times \phi$, we have 
\begin{equation} \label{eq_lim}
\textstyle{\lim\limits _{j\to +\infty}\frac{\phi(\gamma(t_j))}{t_j}= d\varphi(v).}
\end{equation} 
Therefore, if $v,w\in C(X,0)$, there are curves $\gamma, \beta\colon [0,\varepsilon )\to X$  such that  $\gamma(t)=tv+o(t)$ and $\beta(t)=tw+o(t)$. Thus, by the hypothesis of the theorem, we get
$$
\textstyle{\frac{1}{C_1}\left\|\frac{\gamma(t_j)}{t_j}-\frac{\beta(t_j)}{t_j}\right\|\leq \left\|\frac{\phi(\gamma(t_j))}{t_j}-\frac{\phi(\beta(t_j))}{t_j}\right\|\leq C_2\left\|\frac{\gamma(t_j)}{t_j}-\frac{\beta(t_j)}{t_j}\right\|}.
$$
Passing to the limit $j\to +\infty $ and using (\ref{eq_lim}), we obtain
$$
\frac{1}{C_1}\|v-w\|\leq \|d\varphi(v)-d\varphi(w)\|\leq C_2\|v-w\|.
$$
\end{proof}
This proves the theorem.
\end{proof}

% In fact, by following the proof of Theorem \ref{tg:thm:equivcone}, we can prove the following more general result.
% \begin{proof}
% The proof is left as an exercise for the reader.
% \end{proof}

\subsection{Exercises}\label{sec:exercises_tg_cones}

\begin{exercise}
Prove Proposition \ref{tg:prop:basic_prop_tg}.
\end{exercise}

\begin{exercise}
Prove Proposition \ref{tg:exer:link_tg__divisor_bs}.
\end{exercise}

\begin{exercise}
Prove Proposition \ref{tg:exer:norm_tg_curve}.
\end{exercise}

\begin{exercise}\label{tg:exer:inv_tg_cones}
Let $X_1\subset \R^{m_1}$ and $X_2\subset \R^{m_2}$ be closed sets such that $C(X_i,p_i)=\{v\in \R^{m_i};$ there exists a continuous arc $\gamma\colon [0,\varepsilon )\to X_i$ such that $\gamma(t)-p_i=tv+o(t)\}$. If there is a bi-Lipschitz homeomorphism $\phi\colon (X_1,p_1)\to (X_2,p_2)$, then prove that there is a global bi-Lipschitz homeomorphism\index{homeomorphism!global bi-Lipschitz} $d\phi\colon C(X_1,p_1)\to C(X_2,p_2)$ such that $d\phi(0)=0$. 
\end{exercise}

\section[Lipschitz Regularity Theorem]{Lipschitz Regularity Theorem and the bi-Lipschitz invariance of the multiplicity 1}\label{sec:Multiplicity_one}
We start this section with the introduction of different notions of Lipschitz regular points\index{point!Lipschitz regular} (or Lipschitz regularity). The aim of this section is to show that, for any analytic subset $X\subset\C^n$, Lipschitz regularity of $X$ at $p\in X$ implies that $p$ is an analytic regular point\index{point!analytic regular} of $X$ (for any notion of Lipschitz regularity\index{Lipschitz regularity} we work with). 

\subsection{Notions of Lipschitz regularity}\label{subsec:lip_regularity_notions}
\begin{definition}\label{def:Lip_reg}
We say that a set $X\subset \R^n$ is {\bf Lipschitz regular}\index{Lipschitz regular} (resp. a {\bf Lipschitz submanifold})\index{Lipschitz submanifold} at $p\in X$ if there exist an open neighbourhood $U\subset \R^n$ and a bi-Lipschitz homeomorphism $\varphi\colon U\cap X \to B^d$ (resp. $\varphi\colon U \to B^n$ such that $\varphi(U\cap X)=B^n\cap (\R^d\times \{0\})$), where $B^k$ is the unit open ball\index{ball!open} of $\R^k$ centred at the origin.
\end{definition}

\begin{definition}\label{def:Lip_graph}
We say that a set $X\subset \R^n$ is a {\bf Lipschitz graph}\index{Lipschitz graph} at $p\in X$ if there exist an open neighbourhood $U\subset \R^n$ and a Lipschitz map $F\colon B^d \to \R^{n-d}$ such that $U\cap X=graph(F)$.
\end{definition}

It is clear the following:
\begin{itemize}
 \item [(1)] If $X$ is a Lipschitz graph at $p$ then it is a Lipschitz submanifold at $p$;
 \item [(2)] If $X$ is a Lipschitz submanifold at $p$ then it is Lipschitz regular at $p$.
\end{itemize}
However, the converses of (1) and (2) are not true in general for semialgebraic sets (see Exercises \ref{reg:exer:lip_reg_not_lip_submanifold} and \ref{reg:exer:lip_submanifold_not_Lip_graph}).

% \begin{example}
% Let $L=\{(x,y)\in\C^2; x^3=y^2\}\cap \mathbb{S}^3$. Then $X=Cone(L)=\{tu;u\in L$ and $t\geq 0\}$ is Lipschitz regular at $0$, but it is not a Lipschitz submanifold at $0$.
% \end{example}

% \begin{example}
% % $X=\{(x,y,z,w,t)\in\R^5; x^3+y^3+z^3=0$ and $w=t=0\}$ 
% For $X$ defined as above, $Y=X\times \{0\}\subset \C^3$ is a Lipschitz submanifold at $0$, but it is not a Lipschitz graph at $0$.
% \end{example}

\subsection{$C^k$ smoothness of analytic sets}
In this subsection, we present other notions of regularity of sets apart from those presented in Subsection \ref{subsec:lip_regularity_notions}.
\begin{definition}\label{def:smoothness}
For $k\in \mathbb{N}\cup \{\infty, \omega\}$, we say that a set $X\subset \R^n$ is {\bf $C^k$ submanifold}\index{$C^k$ submanifold} or {\bf $C^k$ smooth}\index{smooth!$C^k$} at $p\in X$ if there exist an open neighbourhood $U\subset \R^n$ of $p$ and a $C^k$ diffeomorphism\index{diffeomorphism} (or a homeomorphism\index{homeomorphism} when $k=0$) $\varphi\colon U \to B^n$ such that $\varphi(U\cap X)=B^d\times \{0\}$ and $\varphi(p)=0$.
\end{definition}

Since we can see $\C^n$ as $\R^{2n}$, subsets of $\C^n$ are subsets of $\R^{2n}$. Therefore the notions of regularity or smoothness introduced in Definitions \ref{def:Lip_reg}, \ref{def:Lip_graph} and \ref{def:smoothness} make sense even to subsets of $\C^n$.

\begin{proposition}\label{C1-regularity} If a complex analytic subset $X$ is $C^1$ smooth\index{smooth!$C^1$} 
at $x_0\in X$, then $X$ is a complex analytic submanifold at $x_0$.
\end{proposition}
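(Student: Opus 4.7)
The plan is to show that, after a $\C$-linear change of coordinates, $X$ is locally the graph of a holomorphic map, which identifies it as a complex analytic submanifold at $x_0$.

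\emph{Step 1 (Tangent space is $\C$-linear).} Since $X$ is $C^1$ smooth at $x_0$, Proposition \ref{tg:prop:basic_prop_tg}(4) gives $C(X,x_0)=T_{x_0}X$, a real linear subspace of $\C^n$. On the other hand, $X$ is complex analytic, so Remark \ref{remark-tangent-cone} implies that $C(X,x_0)$ is a complex cone, in particular closed under multiplication by $i$. A real linear subspace closed under multiplication by $i$ is a complex linear subspace, so $T_{x_0}X$ is in fact a complex linear subspace; denote its complex dimension by $d$, so that the real dimension of $X$ at $x_0$ is $2d$.

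\emph{Step 2 (Local graph representation).} After a $\C$-linear change of coordinates, I can arrange $T_{x_0}X=\C^d\times\{0\}\subset\C^d\times\C^{n-d}$. Let $\pi\colon\C^n\to\C^d$ be the projection onto the first $d$ complex coordinates. Its differential restricts to an $\R$-linear isomorphism $T_{x_0}X\to\C^d$, and since $X$ is a $C^1$ submanifold near $x_0$, the inverse function theorem produces a neighborhood $V\subset\C^d$ of $x_0'=\pi(x_0)$, a neighborhood $W$ of $x_0$ in $\C^n$, and a $C^1$ map $g\colon V\to\C^{n-d}$ with
$$X\cap W=\{(z',g(z')):z'\in V\}.$$
After shrinking $W$, I may assume that $X$ is $C^1$ smooth at every point of this graph.

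\emph{Step 3 ($g$ satisfies Cauchy--Riemann).} At an arbitrary point $p=(z',g(z'))$ of the graph the hypotheses apply again, so Step 1 applied at $p$ shows $T_pX$ is a complex linear subspace of $\C^n$. Writing out this tangent space explicitly,
$$T_pX=\{(v,dg(z')\,v):v\in\C^d\},$$
where $dg(z')$ is the $\R$-linear Fréchet derivative of $g$. This graph subspace is stable under multiplication by $i$ if and only if $dg(z')(iv)=i\,dg(z')(v)$ for every $v$, i.e.\ if and only if $dg(z')$ is $\C$-linear. Hence $g$ has $\C$-linear derivative at every point of $V$, which is precisely the Cauchy--Riemann system; since $g$ is $C^1$, it follows that $g$ is holomorphic. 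Thus $X\cap W$ is the graph of a holomorphic map, i.e.\ a complex analytic submanifold at $x_0$.

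The heart of the argument is Step 1, where the two structures cooperate: $C^1$ smoothness collapses the tangent cone to an $\R$-linear subspace (a fact drawn from Proposition \ref{tg:prop:basic_prop_tg}(4)), and complex analyticity forces that subspace to be invariant under multiplication by $i$ (Remark \ref{remark-tangent-cone}). Propagating this observation to every point of the graph in Step 3 is what upgrades a merely $C^1$ parametrization to a holomorphic one via Cauchy--Riemann; none of the remaining steps present a serious obstacle once the local graph picture is in place.
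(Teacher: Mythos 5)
Your proof is correct. The paper leaves this proposition as an exercise, so there is no proof of record to compare against; your argument --- using Proposition \ref{tg:prop:basic_prop_tg}(4) and Remark \ref{remark-tangent-cone} to see that the tangent space is a complex subspace, writing $X$ locally as a $C^1$ graph over that subspace, and propagating the $i$-invariance of the tangent spaces along the graph to obtain the Cauchy--Riemann equations --- is the standard route, and each step is sound.
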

\begin{proof}
The proof is left as an exercise for the reader.
\end{proof}

\begin{example}\label{C0-regularity} The set $X=\{(x,y,z)\in \C^3; y^3=z^2$ and $x=0\}$ is $C^0$ smooth\index{smooth!$C^0$} 
at any $p\in X$. Indeed, let $\phi\colon \C\to Y=\{(x,y)\in \C^2; y^2=x^3\}$ be the homeomorphism given by $\phi(t)=(t^2,t^3)$.
Let $\psi \colon \C^2\to \C$ be a continuous extension of $\phi^{-1}$ and let $\varphi\colon \C\times \C^2\to \C^2\times \C$ be the map given by $\varphi(s,u)=(s+\psi(u),u-\phi(s+\psi(u)))$. 
We have that $\varphi$ is a homeomorphism such that $\varphi^{-1}(t,v)=(t-\psi(v+\phi(t)),v+\phi(t))$ and $\varphi(X)=\C\times \{0\}$.
However, since $m(X,0)=2$, by Proposition \ref{C1-regularity} and Corollary \ref{cor:regularity_vs_mult}, $X$ is not $C^1$ smooth at $0$.
\end{example}

\subsection{Lipschitz regularity of analytic sets}
The following result is due to Sampaio in \cite{Sampaio:2016}, but it was proved in a slight weaker version by Bibrair at al. in \cite{BirbrairFLS:2016}. 
\begin{theorem}[Lipschitz Regularity Theorem]\label{reg:thm:Lip_regularity} \index{Theorem!Lipschitz Regularity}
Let $X\subset\C^n$ be a complex analytic set. If $X$ is Lipschitz regular at $x_0\in X$, then $X$ is smooth at $x_0$.
\end{theorem}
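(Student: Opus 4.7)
The plan is to reduce Lipschitz regularity to analytic regularity by transporting the bi-Lipschitz triviality of $(X, x_0)$ onto its tangent cone, and then exploiting the rigid complex-algebraic structure of the tangent cone. Translate so that $x_0 = 0$ and, without loss of generality, assume $X$ has pure complex dimension $k$ near $0$, so the real dimension is $2k$. By the definition of Lipschitz regularity, there is a bi-Lipschitz homeomorphism $\varphi \colon (U \cap X, 0) \to (B^{2k}, 0)$. Applying the bi-Lipschitz invariance of tangent cones (Theorem \ref{tg:thm:equivcone}) yields a global bi-Lipschitz homeomorphism $d\varphi \colon C(X, 0) \to C(B^{2k}, 0) = \R^{2k}$. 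In particular, $C(X, 0)$ is homeomorphic to $\R^{2k}$, so it is a connected, simply connected topological manifold of real dimension $2k$.

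By Remark \ref{remark-tangent-cone}, the set $C(X, 0)$ is a complex algebraic cone in $\C^n$ of pure complex dimension $k$. The key step is to show that a complex algebraic cone $C \subset \C^n$ of pure complex dimension $k$ that is homeomorphic to $\R^{2k}$ must be a $k$-dimensional complex linear subspace. I would argue as follows: choose a generic complex linear projection $\pi \colon \C^n \to \C^k$ whose kernel meets $C$ only at $0$; then $\pi|_C \colon C \to \C^k$ is a finite surjective holomorphic map of some degree $e = m(X, 0)$. Let $B \subset \C^k$ be its branch locus, a proper complex algebraic subset, so $B$ has real codimension at least $2$ in $\C^k$, and $\pi^{-1}(B)$ has real codimension at least $2$ in $C \cong \R^{2k}$. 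Hence $\C^k \setminus B$ and $C \setminus \pi^{-1}(B)$ are open, connected, and their inclusions into $\C^k$ and $C$ respectively induce surjections on fundamental groups. Since $C$ is simply connected, the restriction $\pi|_C \colon C \setminus \pi^{-1}(B) \to \C^k \setminus B$ is a connected covering by a simply connected space, of degree $e$. A standard argument using small loops around smooth points of $B$ shows that any $e > 1$ branched cover of $\C^k$ produces nontrivial loops in the total space; thus $e = 1$. Therefore $\pi|_C$ is a finite holomorphic bijection, and since $\pi$ is linear, $C$ is the graph of a linear map, i.e.\ a complex linear subspace.

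With $C(X, 0)$ a linear subspace of complex dimension $k$, one has $m(X, 0) = 1$ (by the geometric characterization of multiplicity via generic linear projections), and Corollary \ref{cor:regularity_vs_mult} then gives that $0$ is an analytic regular point of $X$, which is the desired conclusion. The main obstacle is the core algebraic-topological step that a complex algebraic cone homeomorphic to Euclidean space must be linear. The case $k = 1$ is elementary, since a $1$-dimensional complex algebraic cone is a finite union of complex lines through $0$ and being a topological manifold forces it to consist of a single line. For $k \geq 2$, the fundamental-group argument above needs care with transversality of $\pi$ to the branch stratification; alternatively one can invoke Prill's theorem on topologically smooth analytic cones, after first reducing to the normal case via the observation that a topological manifold is locally irreducible, so the normalization map of $C(X, 0)$ is a homeomorphism.
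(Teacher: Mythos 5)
Your first half follows the paper: bi-Lipschitz invariance of tangent cones (Theorem \ref{tg:thm:equivcone}) makes $C(X,x_0)$ a topological manifold, and then Prill's theorem (which the paper quotes, and which your covering-space argument essentially reproves) forces $C(X,x_0)$ to be a complex linear subspace. But the final step is a genuine gap: the inference ``$C(X,0)$ is a linear subspace, hence $m(X,0)=1$'' is false. The multiplicity is the degree of a generic linear projection restricted to $X$, not to its tangent cone; the standard counterexample is the cusp $X=\{y^2=x^3\}\subset\C^2$, whose tangent cone at $0$ is the line $\{y=0\}$ while $m(X,0)=2$. (What is true is that the degree of $\pi$ restricted to the \emph{cone} is $\deg C(X,0)=1$; also, earlier in your argument you mislabel that degree as $e=m(X,0)$, which is the same confusion.) As written, your proof uses the Lipschitz regularity hypothesis only to control the tangent cone and then discards it, and the remaining implication simply does not hold.

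The paper closes exactly this gap with a second metric input: since the ball $B^{2d}$ is LNE and LNE is a bi-Lipschitz invariant (Proposition \ref{lne:exer:lne_lip_inv}), $X$ is LNE at $x_0$. Proposition \ref{proposition A} then shows that a complex analytic germ with linear tangent cone $E$ which is LNE must have multiplicity $1$: the orthogonal projection $P\colon X\to E$ is a degree-$m$ branched cover, and if $m\geq 2$ one takes two liftings $\gamma_1,\gamma_2$ of a generic ray $tv_0$ avoiding the branch locus; these are tangent at $0$ (so $\|\gamma_1(t)-\gamma_2(t)\|=o(t)$, by the linearity of the cone) yet any path in $X$ joining them projects to a non-contractible loop around the branch locus and so has length at least $2kt$, contradicting LNE. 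The cusp is precisely the case this argument excludes (it is not LNE at $0$). To repair your proof you must reintroduce the Lipschitz regularity hypothesis at this stage, either via the LNE argument above or by some equivalent use of the inner metric; topological information about $X$ and its tangent cone alone cannot conclude.
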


\noindent{\bf \textit{Proof}} Let $X\subset\C^n$ be a $d$-dimensional complex analytic set. Assume that $X$ is Lipschitz regular at $x_0\in X$. 
Let $h\colon U\rightarrow B$ be 
a subanalytic bi-Lipschitz homeomorphism between an open neighbourhood $U$ of $x_0$ in 
$X$ and $B\subset\R^{2d}$, that is, an open Euclidean ball\index{ball!Euclidean} centred at the origin $0\in\R^{2d}$. Let us suppose that $h(x_0)=0$.  
By Theorem \ref{tg:thm:equivcone}, 
$dh\colon C(X,x_0) \rightarrow T_0B$ is a bi-Lipschitz homeomorphism between the tangent cones  $C(X,x_0)$  and $T_0B=\R^{2d}$. In particular, $C(X,x_0)$ is a topological manifold.

 The next result 
was proved by D. Prill in \cite{Prill:1967}:

\begin{theorem}[Prill's Theorem] \label{prill} \index{Theorem!Prill's}
Let $V\subset \C^n$ be a complex cone. If $0\in V$ has a neighborhood homeomorphic to a Euclidean ball\index{ball!Euclidean}, then $V$ is a linear subspace of $\C^n$.
\end{theorem}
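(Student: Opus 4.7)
My plan is to study $V$ through its link $L_V := V \cap S^{2n-1}$ and its projectivization $\widetilde V := \mathbb{P}(V) \subset \mathbb{P}^{n-1}$; the goal is to prove that $\widetilde V$ has degree $1$, which forces $\widetilde V$ to be a linear subspace of $\mathbb{P}^{n-1}$ and hence $V$ to be a complex linear subspace of $\C^n$.

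Let $d = \dim_{\C} V$, so the Euclidean neighborhood in the hypothesis necessarily has real dimension $2d$. Because $V$ is a cone, polar coordinates give $V \setminus \{0\} \cong L_V \times (0,\infty)$, and the hypothesis together with the standard homotopy equivalence $B^{2d} \setminus \{0\} \simeq S^{2d-1}$ yields $L_V \simeq S^{2d-1}$. I would next rule out pathologies away from the origin: the topological non-manifold locus of $V$ is closed and $\C^*$-invariant, and any nonempty closed $\C^*$-invariant subset of $V$ must contain $0$ (since every orbit $\C^* p$ has $0$ in its closure). Combined with the hypothesis that $0$ is a manifold point, this forces $V$ to be a topological manifold at every point. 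The same idea forces $V$ to be irreducible, because two distinct irreducible top-dimensional components would meet in a proper $\C^*$-invariant subvariety along whose generic points $V$ fails to be a manifold, contradicting global manifoldness. Hence $\widetilde V \subset \mathbb{P}^{n-1}$ is an irreducible projective variety of complex dimension $d - 1$.

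The central computation is a Gysin sequence argument. The action of $S^1 \subset \C^*$ on $L_V$ by scalar multiplication is free, so $L_V \to \widetilde V$ is a principal $S^1$-bundle, namely the restriction to $\widetilde V$ of the Hopf bundle $S^{2n-1} \to \mathbb{P}^{n-1}$. Substituting $H^*(L_V;\Z) \cong H^*(S^{2d-1};\Z)$ into the Gysin sequence
\[
\cdots \to H^{k-2}(\widetilde V) \xrightarrow{\cup e} H^k(\widetilde V) \to H^k(L_V) \to H^{k-1}(\widetilde V) \xrightarrow{\cup e} H^{k+1}(\widetilde V) \to \cdots
\]
produces $H^*(\widetilde V;\Z) \cong \Z[e]/(e^d)$, where $e \in H^2(\widetilde V)$ is the Euler class; in particular $\langle [\widetilde V], e^{d-1}\rangle = \pm 1$. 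Since the Euler class of the Hopf bundle is, up to sign, the hyperplane class $H$ of $\mathbb{P}^{n-1}$, one has $e = \pm H|_{\widetilde V}$, and therefore $\deg(\widetilde V) = |\langle [\widetilde V], H^{d-1}\rangle| = 1$. An irreducible projective variety of degree $1$ is a linear subspace, so $\widetilde V$ is a linearly embedded $\mathbb{P}^{d-1} \subset \mathbb{P}^{n-1}$, and $V$ is a $d$-dimensional complex linear subspace of $\C^n$.

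The main obstacle is justifying the degree computation on $\widetilde V$ despite the possibility that $\widetilde V$ may fail to be smooth as a complex variety: this is handled by observing that $\widetilde V$ inherits a closed oriented topological manifold structure from the quotient $L_V/S^1$ of the free $S^1$-action on the topological manifold $L_V$, which makes the topological intersection pairing against the fundamental class $[\widetilde V]$ well-defined. A secondary point is that the argument only uses the integer cohomology of $L_V$, which agrees with that of $S^{2d-1}$ via the homotopy equivalence, so no appeal to the topological Poincar\'e conjecture is required.
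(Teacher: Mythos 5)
The paper does not actually prove this statement: Theorem \ref{prill} is quoted as an external result of Prill \cite{Prill:1967} inside the proof of the Lipschitz Regularity Theorem, so there is no internal proof to compare against. Your argument is, in outline, precisely Prill's original one: reduce to the link $L_V=V\cap\mathbb{S}^{2n-1}$, show it is an integral homology $(2d-1)$-sphere (only the cohomology is needed, and it does follow from $H_*(V,V\setminus\{0\})\cong H_*(\R^{2d},\R^{2d}\setminus\{0\})$ together with the cone structure $V\setminus\{0\}\cong L_V\times(0,\infty)$), run the Gysin sequence of the restricted Hopf bundle $L_V\to\widetilde V=\mathbb{P}(V)$ to get $H^*(\widetilde V;\Z)\cong\Z[e]/(e^d)$ with $e=\pm H|_{\widetilde V}$, and conclude $\deg\widetilde V=1$, hence linearity. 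The preliminary reductions (purity of dimension, manifoldness of $V$ away from $0$ via $\C^*$-invariance of the non-manifold locus, irreducibility) are all sound; for irreducibility it is even quicker to observe that every irreducible component of a cone passes through $0$, so reducibility already forces $H_{2d}(V,V\setminus\{0\})$ to have rank at least $2$, contradicting the ball hypothesis directly.

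The one step that does not hold as written is the assertion that $L_V$ is a topological manifold, which you use to endow $\widetilde V=L_V/S^1$ with a closed oriented manifold structure and hence Poincar\'e duality. What you actually know is that $L_V\times(0,\infty)\cong V\setminus\{0\}$ is a topological manifold, and a compact space whose product with $\R$ is a manifold need not itself be one (non-manifold ``manifold factors'' exist, e.g.\ open cones on non-simply-connected homology spheres). This is repairable in at least two standard ways: (i) $L_V$ is a compact ANR whose product with $\R$ is a manifold, hence an orientable homology manifold, and so is the quotient $\widetilde V$; ANR homology manifolds satisfy Poincar\'e duality, which is all your pairing argument uses; or (ii) bypass duality on $\widetilde V$ altogether by noting that $H_{2d-2}(\widetilde V;\Z)\cong\Z$ is generated by the fundamental class of the irreducible $(d-1)$-dimensional projective variety $\widetilde V$, so the universal coefficient theorem (surjectivity of $H^{2d-2}(\widetilde V;\Z)\to{\rm Hom}(H_{2d-2}(\widetilde V;\Z),\Z)$) forces the generator $e^{d-1}$ of $H^{2d-2}(\widetilde V;\Z)\cong\Z$ to pair to $\pm1$ with $[\widetilde V]$, giving $\deg\widetilde V=1$ directly. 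With either repair the proof is complete and coincides with Prill's published argument.
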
 

Now, since we consider complex analytic sets, the tangent cone at $x_0$ of a complex analytic set is
a complex cone (see Remark \ref{remark-tangent-cone}). Then $C(X,x_0)$ is a $d$-dimensional linear subspace of $\C^n$.

Moreover, by Proposition \ref{lne:exer:lne_lip_inv}, $X$ is LNE\index{LNE} at $x_0$. Thus, our main theorem is consequence of the following:

\begin{proposition}\label{proposition A}
Let $X\subset\C^n$ be a complex analytic subset. Let $x_0\in X$ be such that the tangent cone 
$C(X,x_0)$ is a linear subspace of $\C^n$. If there exists a neighbourhood $U$ of $x_0$ in $X$ such that $U$ is LNE\index{LNE}, then $X$ is smooth\index{smooth} at $x_0$.
\end{proposition}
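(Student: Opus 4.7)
My strategy is to use the generic orthogonal projection onto $V := C(X,x_0)$ to identify $m(X,x_0)$ with a topological degree, and then rule out multiplicity $\ge 2$ via LNE plus a monodromy argument. Let $\pi\colon \C^n \to V$ be the orthogonal projection. Since $\ker\pi \cap V = \{0\}$, $\pi$ is a generic projection in the sense of the earlier proposition on multiplicity, so on a suitable open neighbourhood $U$ of $x_0$ the restriction $\pi|_{X\cap U}\colon X\cap U \to V$ is a finite analytic map of topological degree $m := m(X,x_0)$. By Corollary~\ref{cor:regularity_vs_mult} it suffices to prove $m=1$.

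Assume for contradiction $m\ge 2$, and let $B\subset V$ be the (complex analytic) branch locus of $\pi|_{X\cap U}$; since $\pi^{-1}(0)\cap X = \{x_0\}$ while $m\ge 2$, ramification forces $0 \in B$. Because $\dim_\C C(B,0) \le \dim_\C B \le d-1 < d$, one can choose a unit vector $v\in V\setminus C(B,0)$, and a short tangent-cone argument shows there exists $c>0$ with $d_V(tv,B)\ge c\,t$ for all small $t>0$. Set $p=tv$. For $t$ small, $p$ is a regular value of $\pi|_{X\cap U}$, giving $m$ distinct preimages $x_1(p),\dots,x_m(p)$; writing $x_i(p)=p+\eta_i(p)$ with $\eta_i(p)\in V^\perp$, the hypothesis $C(X,x_0)=V$ implies $\|\eta_i(p)\|=o(\|p\|)$ (every subsequential limit of $x_i(p)/\|x_i(p)\|$ lies in $V$, which forces the $V^\perp$-component $\eta_i(p)/\|x_i(p)\|$ to vanish asymptotically). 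Consequently $\|x_i(p)-x_j(p)\|=o(\|p\|)$, and the LNE hypothesis yields
\[
d_X(x_i(p),x_j(p)) \le C\,\|x_i(p)-x_j(p)\| = o(\|p\|).
\]

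Now fix $i\ne j$ and pick a rectifiable arc $\gamma\subset X\cap U$ from $x_i(p)$ to $x_j(p)$ of length $o(\|p\|)$. Since the analytic subvariety $\pi^{-1}(B)\cap X$ has complex codimension $\ge 1$, hence real codimension $\ge 2$ in $X$, one perturbs $\gamma$ to a nearby arc $\widetilde\gamma$ of comparable length inside $(X\cap U)\setminus \pi^{-1}(B)$. Since $\pi$ is $1$-Lipschitz, $\pi\circ\widetilde\gamma$ is then a loop in $V\setminus B$ based at $p$ of length $o(\|p\|) < c\|p\|$, therefore contained in the Euclidean ball $B_V(p,c\|p\|)$, which is simply connected and disjoint from $B$ by the choice of $v$. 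Being null-homotopic rel $p$ in $V\setminus B$, this loop lifts under the covering map $(X\cap U)\setminus\pi^{-1}(B)\to V\setminus B$ to a path homotopic rel starting point to the constant path at $x_i(p)$; in particular $\widetilde\gamma$ must terminate at $x_i(p)$, contradicting $x_j(p)\ne x_i(p)$. Hence $m=1$.

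The hardest step I expect is justifying the perturbation $\gamma \leadsto \widetilde\gamma$ while keeping the length estimate $o(\|p\|)$. Rigorously, one needs the standard but technical fact that in a complex analytic set of pure dimension $d$, the inner distance between two points can be approximated arbitrarily well by rectifiable paths lying in the complement of any given proper analytic subvariety; this uses density of the smooth locus $X_{\mathrm{reg}}$ together with the fact that complex analytic subvarieties have real codimension $\ge 2$ and therefore do not locally separate smooth points. All the remaining ingredients — genericity of $\pi$, the asymptotic vanishing of $\eta_i$, the tangent-cone estimate on $d_V(tv,B)$, and the covering/monodromy step — are standard once this perturbation is available.
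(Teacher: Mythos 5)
Your proof is correct and follows essentially the same route as the paper's: orthogonal projection onto the tangent cone realizes $m(X,x_0)$ as a covering degree away from the branch locus, a direction non-tangent to the branch locus produces two preimages at Euclidean distance $o(t)$ but inner distance bounded below by a constant times $t$, contradicting LNE. The perturbation step you flag as hardest is in fact unnecessary: since $\pi$ is $1$-Lipschitz and $d_V(p,B)\geq c\|p\|$, every point of $\pi^{-1}(B)$ lies at Euclidean distance at least $c\|p\|$ from $x_i(p)$, so any path in $X$ issuing from $x_i(p)$ of length $o(\|p\|)$ automatically stays in $(X\cap U)\setminus\pi^{-1}(B)$ --- this is exactly how the paper argues, avoiding the approximation-of-inner-distance lemma altogether.
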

\begin{proof} 
Since $E:=C(X,x_0)$ is a linear subspace  of $\C^n$, we can consider the orthogonal projection
$$P\colon \C^n\rightarrow E.$$
We may suppose that $x_0=0$ and $P(x_0)=0$. Let us choose linear coordinates $(x,y)$ in $\C^n$ such that $E=\{(x,y)\in\C^n;\,y=0\}.$

\begin{claim}\label{reg:claim:algebraicity_one_local}
There exist positive constants $C$ and $\rho$ such that $X\cap B_{\rho}\subset \{(x,y);\|y\|< C\|x\|\}$.
\end{claim}
\begin{proof}%[Proof of Claim \ref{reg:claim:algebraicity_one_local}]

Indeed, if this claim is not true, there exists a sequence $\{(x_k,y_k)\}_{k\in \N}\subset X$ such that $\lim\limits_{k\to+\infty}(x_k,y_k)=0$ and $\|y_k\|\geq k\|x_k\|$. Thus, up to a subsequence,  one can suppose that $\lim\limits_{k\to+\infty}\frac{y_k}{\|y_k\|}=y_0$. Since $\frac{\|x_k\|}{\|y_k\|}\leq \frac{1}{k}$, we obtain that  $(0,y_0)\in C(X,0)$, which is a contradiction, because $y_0\not=0$. Therefore, Claim \ref{reg:claim:algebraicity_one_local} is true.

\end{proof}
Notice that the germ of the restriction of the orthogonal projection $P$ to $X\cap B_{\rho}$ is a finite complex analytic map germ. 

Moreover, we have the following:

\begin{claim}\label{reg:claim:control_tangency_local}
If $\gamma\colon [0,\varepsilon)\rightarrow X$ is a real analytic arc, such that $\gamma(0)=0$, then the  arcs 
$\gamma$ and $P\circ\gamma$ are tangent at $0$. 
\end{claim}
\begin{proof}%[Proof of Claim \ref{reg:claim:control_tangency_local}]

In order to prove this claim, let us write $\gamma(t)=(x(t),y(t))$. By the previous claim, there exists $t_0>0$ such that  $\|y(t)\|\leq C\|x(t)\|$ for all $t\leq t_0$, since $\lim\limits_{t\to 0^+}\gamma(t)=0$.  Thus, since $\frac{x(t)}{t}$ is bounded, $\frac{y(t)}{t}$ is bounded. Let us suppose that $y(t)\not=o(t)$. Then, there exist a sequence $\{t_k\}_{k\in \N}\subset (0,+\infty)$ and $r>0$ such that $t_k\to 0$ and $\frac{\|y(t_k)\|}{t_k}\geq r$ for all $k$. Since $\left\{\frac{y(t_k)}{t_k}\right\}_{k\in \N}$ is bounded, up to a subsequence, one can suppose  that $\lim\limits_{k\to +\infty}\frac{y(t_k)}{t_k}=y_0$. Therefore, $\lim\limits_{k\to +\infty}\frac{\gamma(t_k)}{t_k}=(v',y_0)\in C(X,0)$, where $v=(v',0)$. However, this is a contradiction, since $\|y_0\|\geq r>0$ and this implies that $y_0\not=0$. Then, $y(t)=o(t)$ and, therefore, $\gamma(t)=tv+o(t)$.
\end{proof}

In this way, the germ at $0$ of $P_{| X}\colon X\rightarrow E$ is a ramified cover and the ramification locus is 
the germ of a codimension $\geq 1$ complex analytic subset $\Sigma$ of the linear space $E$. 

The multiplicity of $X$ at $0$ can be interpreted as the degree $m$ of this germ of ramified covering map, i.e. 
there are open neighbourhoods $U_1$ of $0$ in $X$ and $U_2$ of $0$ in $E$, such that $m$ is the
degree of the topological covering\index{topological covering}:
$$P_{| X}\colon X\cap U_1\setminus P_{| X}^{-1}(\Sigma)\rightarrow E\cap U_2\setminus \Sigma.$$ 
Let us suppose that the degree $m$ is greater than 1. Since $\Sigma$ is a codimension $\geq 1$ complex analytic subset of the space $E$, there exists a unit tangent vector $v_0\in E\setminus C(\Sigma,0)$. 

Since $v_0$ is not tangent to $\Sigma$ at $0$, there exists a positive real number $k$ such that the real cone: 
$$\{v\in E \ | \ \|v-tv_0\|< tk, \ \forall \  0<t<1\}$$ 
does not intersect the set $\Sigma$. Since we have assumed that the degree $m\geq 2$, we have at least two different liftings\index{liftings} $\gamma_1(t)$ and $\gamma_2(t)$ of the half-line $r(t)=tv_0$,  i.e. $P(\gamma_1(t))=P(\gamma_2(t))=tv_0$.   Since $P$ is the orthogonal projection on the tangent cone $E $, the vector $v_0$ is the unit tangent vector\index{tangent vector} to the arcs $\gamma_1$ and $\gamma_2$ at $0$. By
construction, we have ${\rm dist}(\gamma_i(t),P_{| X}^{-1}(\Sigma))\geq kt$ ($i=1,2$), where by dist we mean the Euclidean distance.

On the other hand, any path in $X$ connecting $\gamma_1(t)$ to $\gamma_2(t)$ is the lifting of a loop, based at the point $tv_0$, which is not contractible in the germ of  $E\setminus {\Sigma}$ at $0$. Thus the length of such a path must be at least  $2kt$. It implies that the inner distance, ${\rm d_{X}}(\gamma_1(t),\gamma_2(t))$, in $X$, between $\gamma_1(t)$ and $\gamma_2(t)$, is at least $2kt$. But, since $\gamma_1(t)$ and $\gamma_2(t)$ are tangent at $0$, that is $\displaystyle\frac{\|\gamma_1(t)-\gamma_2(t)\|}{t}\to 0 \ \mbox{as} \ t\to 0^+,$ and $k>0$, we obtain that $X$ is not LNE\index{LNE!not} near $0$. Otherwise there will be $\lambda >0$ such that:  
$$d_X(x_1,x_2)\le \lambda \|x_1-x_2\| \quad \mbox{for all} \quad  x_1,x_2\in X \quad \mbox{near} \ 0,$$ 
hence:
\begin{eqnarray*}
2k &\leq & \frac{\rm{d_{inner}}(\gamma_1(t),\gamma_2(t))}{t} \\
&\leq &\lambda \frac{\|\gamma_1(t)-\gamma_2(t)\|}{t}\to 0.
\end{eqnarray*}
which is a contradiction. 

Therefore, $m=m(X,0)=1$, and thus by Exercise \ref{mult:exer:mult_one_smooth} $X$ is smooth at $0$.
\end{proof}

This concludes the theorem. \hfill$\square$

As a consequence, we obtain that the multiplicity 1 is a bi-Lipschitz invariant.
\begin{corollary}\label{reg:cor2:Lip_regularity} 
Let $X\subset\C^n$ and $Y\subset\C^m$ be  complex analytic sets. If there is a bi-Lipschitz homeomorphism\index{homeomorphism!bi-Lipschitz} $\varphi\colon (X,0)\to (Y,0)$ then $m(X,0)=1$ if and only if $m(Y,0)=1$.
\end{corollary}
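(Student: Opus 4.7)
The plan is to derive this corollary as a direct consequence of the Lipschitz Regularity Theorem (Theorem \ref{reg:thm:Lip_regularity}) together with Corollary \ref{cor:regularity_vs_mult}. By symmetry of bi-Lipschitz equivalence, it suffices to prove one implication: assuming $m(X,0)=1$, I would show $m(Y,0)=1$.

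First I would translate the multiplicity condition into a smoothness condition. By Corollary \ref{cor:regularity_vs_mult}, $m(X,0)=1$ is equivalent to $0$ being an analytic regular point of $X$, i.e.\ $\mathcal{O}_{X,0}\cong \mathcal{O}_{\C^d,0}$ where $d=\dim X$. In particular, $X$ is analytically (hence $C^\omega$) a manifold near $0$, and so admits a local analytic parametrization by an open subset of $\C^d$. Such an analytic parametrization is in particular a bi-Lipschitz homeomorphism onto its image (a $C^1$ diffeomorphism between relatively compact neighbourhoods is bi-Lipschitz), so $X$ is Lipschitz regular at $0$ in the sense of Definition \ref{def:Lip_reg}.

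Next I would transport Lipschitz regularity to $Y$. If $\varphi\colon (X,0)\to (Y,0)$ is a bi-Lipschitz homeomorphism and $\psi\colon U\cap X\to B^{2d}$ realises the Lipschitz regularity of $X$ at $0$, then $\psi\circ\varphi^{-1}$, suitably restricted, realises the Lipschitz regularity of $Y$ at $0$ (it is a composition of bi-Lipschitz maps onto an open Euclidean ball, and $\dim Y=\dim X=d$ is preserved since by Theorem \ref{tg:thm:equivcone} the tangent cones are bi-Lipschitz homeomorphic, forcing equality of real dimensions, and the complex analytic dimension of a complex analytic set equals half the real dimension of its tangent cone). Now the Lipschitz Regularity Theorem (Theorem \ref{reg:thm:Lip_regularity}) applies to the complex analytic set $Y$, yielding that $Y$ is smooth, in fact analytically smooth at $0$ by Proposition \ref{C1-regularity}. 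A final appeal to Corollary \ref{cor:regularity_vs_mult} gives $m(Y,0)=1$, completing the proof.

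The nontrivial work is hidden in Theorem \ref{reg:thm:Lip_regularity}, whose proof was already given in the paper; the only delicate point in this corollary is the preservation of complex dimension under the bi-Lipschitz homeomorphism, which is handled by invoking the bi-Lipschitz invariance of the tangent cone. Everything else is a routine chain of implications.
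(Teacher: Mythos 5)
Your proposal is correct and follows exactly the route the paper intends: the corollary is stated there as an immediate consequence of the Lipschitz Regularity Theorem (Theorem \ref{reg:thm:Lip_regularity}), and your chain $m(X,0)=1 \Rightarrow X$ smooth $\Rightarrow X$ Lipschitz regular at $0$ $\Rightarrow Y$ Lipschitz regular at $0$ (via $\psi\circ\varphi^{-1}$) $\Rightarrow Y$ smooth $\Rightarrow m(Y,0)=1$, together with Corollary \ref{cor:regularity_vs_mult}, is precisely the argument being alluded to. The side remark on matching dimensions via Theorem \ref{tg:thm:equivcone} is a reasonable way to tidy up a point the paper leaves implicit.
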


\subsection{Exercises}\label{sec:exercises_regularity}

\begin{exercise}\label{reg:exer:lip_reg_not_lip_submanifold}
Let $L=\{(x,y)\in\C^2; x^3=y^2\}\cap \mathbb{S}^3$. Prove that $X=Cone(L)=\{tu;u\in L$ and $t\geq 0\}$ is Lipschitz regular at $0$, but it is not a Lipschitz submanifold at $0$.
\end{exercise}

\begin{exercise}\label{reg:exer:lip_submanifold_not_Lip_graph}
Give an example of a semialgebraic set $X\subset \R^n$ which is Lipschitz submanifold at $0$ of $\R^n$, but it is not a Lipschitz graph at $0$.
\end{exercise}

\begin{exercise}
Let $X=\{(x,y,z)\in \R^3;x^3+y^3+z^3=0\}$. Prove that:
\begin{enumerate}
 \item [a)] $X$ is not $C^1$ smooth at $0$;
 \item [b)] $X$ is Lipschitz regular at $0$.
\end{enumerate}
\end{exercise}

\begin{exercise}\label{mult:exer:mult_one_smooth}
Let $X$ be a pure dimensional complex analytic set with $0\in X$. Prove that $X$ is smooth at $0$ if and only if $m(X,0)=1$.
\end{exercise}

\begin{exercise}\label{reg:cor:Lip_regularity} 
Let $X\subset\C^n$ and $Y\subset\C^m$ be  complex analytic sets. If there is a bi-Lipschitz homeomorphism $\varphi\colon X\to Y$ then prove that $\varphi(Reg(X))=Reg(Y)$ and $\varphi(Sing(X))=Sing(Y)$, where for a complex analytic set $A\subset \C^k$, $Reg(A)$ denotes the subset of points $x\in A$ such that for some open neighbourhood $U\subset \C^k$ of $x$, $X\cap U$ is a complex analytic submanifold of $\C^k$, and $Sing(A)=A\setminus Reg(A)$.
\end{exercise}

%%%%%%%%%%%%%%%%%%%%%%%%%%%%%%%%%%%%%%%%%%%%%%%%%%%%%%%%%%
\section{Relative multiplicities and multiplicity of homogeneous singularities.}
%%%%%%%%%%%%%%%%%%%%%%%%%%%%%%%%%%%%%%%%%%%%%%%%%%%%%%%%%%
In this section, we define the relative multiplicities of a complex analytic set at a point. Moreover, we prove that the relative multiplicities are bi-Lipschitz invariant (see Theorem \ref{mult:thm:inv_multiplicities}). We also present a reduction of the Questions AL($d$) and AL($n,d$) for homogeneous algebraic sets (see Corollary \ref{mult:cor:reduction_to_homogeneous}).

\subsection{Bi-Lipschitz invariance of the relative multiplicities}\label{subsection:invariance_rel_mult}

Let $X\subset \C^n$ be a complex analytic set such that $0\in X$. Let $X_1,...,X_r$ be the irreducible components of $C(X,0)$.
Fix $j\in \{1,...,r\}$. For a generic point\index{point!generic} $x\in (X_j\cap \mathbb{S}^{2n-1})\times \{0\}$, the number of connected components\index{components!number of connected} of the germ $(\rho^{-1}(X\setminus\{0\}),x)$ is constant, and we denote this number by $k_X(X_j)$. The integer numbers $k_X(X_j)$ are called the {\bf relative multiplities}\index{relative multiplicities} of $X$ at $0$.

The following result, which was proved by Fernandes and Sampaio in the paper \cite{FernandesS:2016}, shows the bi-Lipschitz invariance of the relative multiplicities\index{relative multiplicities!bi-Lipschitz invariance}.

\begin{theorem}\label{mult:thm:inv_multiplicities}
Let $X\subset \C^n$ and $Y\subset \C^m$ be two complex analytic sets with $p=\dim X=\dim Y$, $0\in X$ and $0\in Y$. Let $X_1,\dots,X_r$ and $Y_1,\dots,Y_s$ be the irreducible components\index{components!irreducible} of the tangent cones $C(X,0)$ and $C(Y,0)$, respectively. If there exists a bi-Lipschitz homeomorphism\index{homeomorphism!bi-Lipschitz} $\varphi\colon(X,0)\to (Y,0)$, then $r=s$ and, up to a reordering of indices,  $k_X(X_j)=k_Y(Y_j)$, $\forall \ j$.
\end{theorem}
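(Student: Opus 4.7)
The plan is as follows. First, via the ``graph-swap'' trick from the proof of Theorem~\ref{tg:thm:equivcone} combined with the McShane--Whitney--Kirszbraun extension (Lemma~\ref{mcshane}), reduce to the case where $X,Y$ lie in a common $\R^N$ and $\varphi$ is the restriction of a global bi-Lipschitz homeomorphism $\Phi\colon\R^N\to\R^N$ with $\Phi(0)=0$. Theorem~\ref{tg:thm:equivcone} then produces (along a suitable diagonal subsequence $\{n_j\}$) a bi-Lipschitz homeomorphism $d\Phi\colon C(X,0)\to C(Y,0)$, positively homogeneous of degree $1$, realized as the uniform-on-compacts limit $\Phi_{n_j}(v)=n_j\Phi(v/n_j)\to d\Phi(v)$ (Claim~\ref{tg:claim:equivcone:one}). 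Applying the Lipschitz Regularity Theorem~\ref{reg:thm:Lip_regularity} to $d\Phi$ and its inverse shows that $d\Phi$ carries the regular locus of $C(X,0)$ onto the regular locus of $C(Y,0)$. Since the irreducible components of a complex analytic cone are exactly the closures of the connected components of its regular locus (each $\mathrm{Reg}(X_i)$ is connected and meets the other components in a complex analytic subset of smaller dimension, i.e.\ of real codimension $\ge 2$ inside a complex manifold, whose removal preserves connectedness), $d\Phi$ induces a bijection on irreducible components. Hence $r=s$, and after relabeling $d\Phi(X_j)=Y_j$ for every $j$.

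Fix $j$ and pick $x_0\in X_j\cap\mathbb{S}^{2n-1}$ so that both $x_0$ and $y_0:=d\Phi(x_0)/\|d\Phi(x_0)\|\in Y_j\cap\mathbb{S}^{2m-1}$ lie in the open dense subsets of their links on which the generic relative-multiplicity count is attained; this is possible because $\xi\mapsto d\Phi(\xi)/\|d\Phi(\xi)\|$ is a homeomorphism between the corresponding links. By Proposition~\ref{tg:exer:link_tg__divisor_bs}, this generic choice translates into the statement that for all sufficiently small $\delta,\varepsilon>0$ the horn neighborhood
$$W_X(x_0,\delta,\varepsilon):=\Bigl\{w\in X\setminus\{0\}\ :\ \bigl\|\tfrac{w}{\|w\|}-x_0\bigr\|<\delta,\ \|w\|<\varepsilon\Bigr\}$$
has exactly $k_X(X_j)$ connected components, and similarly $W_Y(y_0,\delta,\varepsilon)$ has $k_Y(Y_j)$.

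The key analytic step is the ``horn-to-horn'' inclusion: combining the uniform convergence $\Phi_{n_j}\to d\Phi$ with the subanalytic curve selection (Proposition~\ref{selection_lemma_cone}) to approximate points of $W_X(x_0,\delta,\varepsilon)$ along arcs $\gamma(t)=tv+o(t)$ with $v\in\overline{B}(x_0,\delta)$, and using the Lipschitz estimate for $\Phi$ to absorb the $o(t)$ remainders, one shows that for any prescribed $\delta',\varepsilon'>0$ one can choose $\delta,\varepsilon>0$ small enough that $\Phi(W_X(x_0,\delta,\varepsilon))\subseteq W_Y(y_0,\delta',\varepsilon')$. The symmetric argument applied to $\Phi^{-1}$ at $y_0$ yields the reverse inclusion. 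By nesting such inclusions $W_Y^{\mathrm{small}}\hookrightarrow\Phi(W_X^{\mathrm{med}})\hookrightarrow W_Y^{\mathrm{big}}$, where both $W_Y^{\mathrm{small}}$ and $W_Y^{\mathrm{big}}$ realize the generic count $k_Y(Y_j)$ and the outer inclusion is bijective on connected components, the $\pi_0$-injectivity of $W_Y^{\mathrm{small}}\hookrightarrow \Phi(W_X^{\mathrm{med}})$ (no two components of $W_Y^{\mathrm{small}}$ can merge inside $W_Y^{\mathrm{big}}\supseteq\Phi(W_X^{\mathrm{med}})$) combined with the fact that $\Phi$ is a homeomorphism (so $\Phi(W_X^{\mathrm{med}})$ has $k_X(X_j)$ components) gives $k_Y(Y_j)\le k_X(X_j)$; symmetry yields the equality $k_X(X_j)=k_Y(Y_j)$.

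The main obstacle is the horn-to-horn inclusion just described. The uniform convergence $\Phi_{n_j}\to d\Phi$ only directly controls the discrete scales $\|w\|=1/n_j$, so propagating this into continuous control of the direction $\Phi(w)/\|\Phi(w)\|$ as $w\to 0$ through $W_X$ requires combining the Lipschitz estimate for $\Phi$ with a subanalytic parametrisation of the branches of $X$ at the origin. Once this quantitative direction control is in place, the permutation of irreducible components and the sandwich bijection of connected components are essentially formal.
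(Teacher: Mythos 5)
Your overall skeleton --- reduce to an ambient bi-Lipschitz map $\Phi$, use the tangent map of Theorem \ref{tg:thm:equivcone} together with the Lipschitz Regularity Theorem to match the irreducible components of the two cones, then compare local component counts near a generic direction --- is the same as the paper's. The divergence, and the genuine gap, is in how you transport the local count. Your ``horn-to-horn'' inclusion $\Phi(W_X(x_0,\delta,\varepsilon))\subseteq W_Y(y_0,\delta',\varepsilon')$ is false for general bi-Lipschitz maps, and no subanalytic parametrisation of the branches of $X$ can rescue it, because the oscillation lives in $\Phi$ (which is not subanalytic), not in $X$. The point is that $d\Phi$ is only a limit of the rescalings $\Phi_{n_j}(v)=n_j\Phi(v/n_j)$ along a subsequence of scales; between consecutive scales $1/n_j$ the map may rotate. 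Concretely, $\Phi(z)=z\,e^{i\sin(\log|z|)}$ is bi-Lipschitz near $0\in\C$ (in the orthonormal polar coframe its differential is lower unipotent with bounded off-diagonal entry $\cos(\log r)$), yet $\Phi(t\xi)/t=\xi e^{i\sin(\log t)}$ does not converge as $t\to0^+$: the image of a thin horn about the direction $\xi$ sweeps the whole arc of directions $\{\xi e^{i\theta}:\ |\theta|\le 1\}$ and is contained in no thin horn about a single direction. (The same example shows the subsequential limit $d\Phi$ need not be positively homogeneous, contrary to what you assert.) With the inclusion gone, the sandwich $W_Y^{\mathrm{small}}\hookrightarrow\Phi(W_X^{\mathrm{med}})\hookrightarrow W_Y^{\mathrm{big}}$ collapses: the outer horn would have to be so large that it need not realize the generic count $k_Y(Y_j)$.

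The paper circumvents exactly this obstruction by never asking for scale-by-scale directional control. It fixes a linear projection $\pi\colon\C^n\to\C^p$ generic for both cones, so that over a thin cone $C_{\eta,\varepsilon}(v')$ of generic directions $X$ and $Y$ become finite unbranched covers, each sheet bi-Lipschitz to the base; $k_X(X_j)$ is the number of sheets whose closures are tangent to $X_j$. If $k_X(X_j)>k_Y(Y_j)$, a pigeonhole argument carried out only at the discrete scales $t_k$ (where $\varphi(t_kv)/t_k\to d\varphi(v)$ does hold) produces points $t_kx_k,\ t_ky_k$ in two \emph{different} sheets of $X$ whose images land in one and the \emph{same} sheet of $Y$. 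Two different sheets over a point at distance $\asymp t_k$ from the origin are at inner distance $\ge Ct_k$ in $X$ (a path joining them projects to a non-contractible loop in the complement of the ramification locus), whereas two points of a single sheet of $Y$ that are $o(t_k)$ apart are at inner distance $o(t_k)$; since a bi-Lipschitz homeomorphism distorts the inner metric by a bounded factor, this is a contradiction. If you want to keep your horn picture, you must replace the Euclidean inclusion of horns by this inner-metric separation of sheets, or equivalently restrict all counting to the discrete scales $t_k$.
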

\begin{proof} 
Let $S=\{t_k\}_{k\in\N}$ be a sequence of positive real numbers such that 
$$t_k\to 0 \quad\mbox{and} \quad \frac{\varphi(t_kv)}{t_k}\to d\varphi(v)$$ 
where $d\varphi$ is a tangent map of $\varphi$ as in Theorem \ref{tg:thm:equivcone}. Since $d\varphi$ is a bi-Lipschitz homeomorphism, we obtain $r=s$ and there is a permutation $\sigma\colon\{1,\dots,r\}\to \{1,\dots,s\}$ such that $d\varphi (X_i)=Y_{\sigma(i)}$ $\forall \ i.$ This is because we can assume $d\varphi(X_i)=Y_i$ $\forall \ i$ up to a reordering of indices. Let $$SX=\{(x,t)\in \mathbb{S}^{2n-1}\times S;\,tx \in X\}.$$ 
Thus, $\rho^{-1}\circ\varphi\circ\rho \colon SX\rightarrow Y'$ is an injective and continuous map that extends continuously to a map $\varphi'\colon \overline{SX}\to Y'.$
 
For each generic point $x\in \mathbb{S}_0 X_j\times\{0\}$, we know that $k_X(X_j)$ is the number of connected components of the set $\rho^{-1}(X\setminus \{0\})\cap B_{\delta }(x)$ for  $\delta>0$ small enough. Then, $k_X(X_j)$ can be seen as the number of connected components of the set $(SX\cap \mathbb{S}^{2n-1}\times \{t_k\})\cap B_{\delta }(x)$ for $k$ large enough.

Let $\pi\colon\C^n\to \C^p$ be a linear projection such that 
$$\pi^{-1}(0)\cap(C(X,0)\cup C(Y,0))=\{0\}.$$ 
Let us denote the ramification locus of 
$$\pi_{| X}\colon X\to \C^p \quad \mbox{and} \quad \pi_{| C(X,0)}\colon C(X,0)\to \C^p$$ 
by $\sigma(X)$ and $\sigma(C(X,0))$ respectively.

Given a generic point $v'\in \C^p\setminus (\sigma(X)\cup \sigma(C(X,0)))$ (generic here means that $v'$ defines a direction not tangent to $\sigma(X)\cup \sigma(C(X,0))$), let $\eta,\varepsilon >0$ be sufficiently small such that 
$$C_{\eta,\varepsilon }(v')=\{w\in \C^p|\, \exists t>0; \|tv'-w\|<\eta t\}\cap B_{\varepsilon }(0)\subset \C^p\setminus \sigma(X)\cup \sigma(C(X,0)).$$

The number of connected components of $\pi^{-1}(C_{\eta,\varepsilon }(v'))\cap X$ is exactly $m(X)$, since $C_{\eta,\varepsilon }(v')$ is simply connected and $\pi\colon  X\setminus \pi^{-1}(\sigma(X))\to \C^p\setminus\sigma(X)$ is a covering map. Then, we get that $\pi|_V\colon V\to C_{\eta,\varepsilon }(v')$ is bi-Lipschitz for each connected component $V$ of  $\pi^{-1}(C_{\eta,\varepsilon }(v'))\cap X$. Therefore, for each $j=1,\dots,r$, there are different connected components $V_{j1},\dots,V_{jk_X(X_j)}$ of $\pi^{-1}(C_{\eta,\varepsilon }(v'))\cap X$ such that $C(\overline{V_{ji}},0)\subset X_j$, $i=1,...,k_X(X_j)$.

Let us suppose that there is $j\in\{1,\dots,r\}$ such that $k_X(X_j)>k_Y(Y_j)$, it means that if we consider a generic point $x=(v,0)\in\partial X'\cap X_j\times\{0\}$, there are at least two different connected components $V_{ji}$ and $V_{jl}$ of $\pi^{-1}(C_{\eta,\varepsilon }(\pi(v)))\cap X$ and sequences $\{(x_k,t_k)\}_{k\in \N}\subset \rho^{-1}(V_{ji})\cap SX$ and $\{(y_k,t_k)\}_{k\in \N}\subset \rho^{-1}(V_{jl})\cap SX$ such that $\lim (x_k,t_k)=\lim (y_k,t_k)=x$ and $\varphi'(x_k,t_k),\varphi'(y_k,t_k)\in \rho^{-1}(\widetilde V_{jm})$, where $\widetilde V_{jm}$ is a connected component of  $\pi^{-1}(C_{\eta,\varepsilon }(\pi(d\varphi(v))))\cap Y$.

Since $\varphi(t_kx_k),\varphi(t_ky_k)\in\widetilde V_{jm}$ $\forall$ $k\in \N$ and $V=\widetilde V_{jm}$ is bi-Lipschitz homeomorphic to $C_{\eta,\varepsilon }(\pi(d\varphi(v)))$, we have 
$$\|\varphi(t_kx_k)-\varphi(t_ky_k)\|=o(t_k)$$ 
and
$$d_Y(\varphi(t_kx_k),\varphi(t_ky_k))\leq d_V(\varphi(t_kx_k),\varphi(t_ky_k))=o(t_k).$$
Now, since  $X$ is bi-Lipschitz homeomorphic to $Y$, we have $d_X(t_kx_k,t_ky_k)\leq o(t_k)$.
On the other hand, since $t_kx_k$ and $t_ky_k$ lie in different connected components of $\pi^{-1}(C_{\eta,\varepsilon }(\pi(v)))\cap X$, there exists a constant $C>0$ such that $d_X(t_kx_k,t_ky_k)\geq Ct_k$, which is a contradiction.

We have proved that $k_X(X_j)\leq k_Y(Y_j)$, $j=1,...,r$. By similar arguments, using that $\varphi^{-1}$ is a bi-Lipschitz map, we also can prove $k_Y(Y_j)\leq k_X(X_j)$, $j=1,...,r$.
\end{proof}

\subsection{Reduction to homogeneous algebraic sets}\label{subsection:reduction_to_homogeneous}

\begin{proposition}\label{mult:prop:kurdyka-raby}
Let $X\subset \C^n$ be a $d$-dimensional complex analytic set such that $0\in X$. Let $X_1,...,X_r$ be the irreducible components of $C(X,0)$. Then
$$
m(X,0)=\sum\limits_{j=1}^r k_X(X_j)m(X_j,0).
$$
\end{proposition}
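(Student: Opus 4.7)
The plan is to compute both sides simultaneously using the geometric interpretation of multiplicity as the degree of a generic linear projection. Choose a linear projection $\pi \colon \C^n \to \C^d$ whose kernel meets $C(X,0)$ only at the origin; such a choice is generic. Then $\pi|_X$ is finite near $0$ of topological degree $m(X,0)$, and since each $X_j$ is a $d$-dimensional complex cone with the same transversality property, $\pi|_{C(X,0)}$ is finite of degree $\sum_{j=1}^r m(X_j,0)$. The goal is to match these two counts via the relative multiplicities.

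Next, I would pick a generic direction. Take $v_0 = \pi(v)$ where $v \in C(X,0) \setminus \{0\}$ is chosen smooth in $C(X,0)$, lying in exactly one component $X_j$, and with the property that every point of $\pi^{-1}(v_0) \cap C(X,0)$ is a smooth point of its own irreducible component; additionally, $v_0$ should lie outside the discriminants $\sigma(X)$ and $\sigma(C(X,0))$ of the two branched coverings. For small $\eta,\varepsilon>0$ the cone $C_{\eta,\varepsilon}(v_0)$ stays outside these discriminants, so $\pi \colon \pi^{-1}(C_{\eta,\varepsilon}(v_0)) \cap X \to C_{\eta,\varepsilon}(v_0)$ is an unramified covering over a simply connected base. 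It therefore trivializes into exactly $m(X,0)$ connected sheets, each projecting bi-Lipschitz homeomorphically to $C_{\eta,\varepsilon}(v_0)$.

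Now I would partition these $m(X,0)$ sheets by the direction they approach at $0$. For each sheet $V$, the tangent cone $C(\overline V, 0)$ projects bi-Lipschitz onto $C(\overline{C_{\eta,\varepsilon}(v_0)}, 0)$, which shrinks to the complex line $\C \cdot v_0$ as $\eta \to 0$; thus $C(\overline V,0)$ is itself a complex line contained in $C(X,0)$, hence in a unique irreducible component $X_j$, and it meets $\pi^{-1}(v_0)$ in a single point $v^{(\ell)} \in X_j$. So each sheet is labelled by some $v^{(\ell)} \in \pi^{-1}(v_0) \cap C(X,0)$. Since there are exactly $m(X_j,0)$ such points in each $X_j$, we get
\[
m(X,0) = \sum_{j=1}^{r} \sum_{v^{(\ell)} \in \pi^{-1}(v_0)\cap X_j} N(v^{(\ell)}),
\]
where $N(v^{(\ell)})$ denotes the number of sheets with tangent direction $v^{(\ell)}$.

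The heart of the argument — and where I expect the main difficulty — is the geometric identification $N(v^{(\ell)}) = k_X(X_j)$. By definition of $k_X(X_j)$, for a generic boundary point $(v^{(\ell)}/\|v^{(\ell)}\|,0) \in \partial X'$ there is a spherical neighbourhood $U_{v^{(\ell)}}$ and a $\delta>0$ so that $\rho_n^{-1}(X \setminus \{0\}) \cap (U_{v^{(\ell)}} \times (0,\delta))$ has exactly $k_X(X_j)$ connected components. Shrinking $\eta,\varepsilon$ simultaneously — using that $\pi$ separates the finitely many directions $v^{(\ell')}$ — one can arrange that the sheets of $\pi^{-1}(C_{\eta,\varepsilon}(v_0)) \cap X$ whose tangent direction is $v^{(\ell)}$ lie entirely inside $\rho_n(U_{v^{(\ell)}} \times (0,\delta))$, and conversely that each of the $k_X(X_j)$ components of $\rho_n^{-1}(X\setminus\{0\})$ in $U_{v^{(\ell)}} \times (0,\delta)$ contributes exactly one sheet over $C_{\eta,\varepsilon}(v_0)$ (since on each such component $\pi$ is a local bi-Lipschitz homeomorphism by the transversality of $\ker \pi$ with $C(X,0)$). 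This bijection yields $N(v^{(\ell)}) = k_X(X_j)$, and summing gives $m(X,0) = \sum_{j=1}^{r} k_X(X_j)\, m(X_j,0)$, as required.
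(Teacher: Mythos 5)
Your argument is correct and is essentially the intended one: the paper leaves this proposition as an exercise, but the machinery you use --- the cones $C_{\eta,\varepsilon}(v_0)$ avoiding the discriminants, the trivialization of the covering over a simply connected base into $m(X,0)$ sheets, and the grouping of sheets by the point of $\pi^{-1}(v_0)\cap C(X,0)$ to which they are tangent, with exactly $k_X(X_j)$ sheets per point of $X_j$ --- is exactly the machinery the paper itself deploys in the proof of Theorem \ref{mult:thm:inv_multiplicities}. One small imprecision: the tangent cone $C(\overline V,0)$ of a sheet is not a complex line but a thin \emph{real} cone inside $C(X,0)$ about the ray $\R_{\geq 0}v^{(\ell)}$ (since $C_{\eta,\varepsilon}(v_0)$ is a real cone about $\R_{\geq 0}v_0$); this does not affect the labelling of sheets by the points $v^{(\ell)}$, which is all you need.
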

\begin{proof}
The proof is left as an exercise for the reader.
\end{proof}

% \begin{proof}
% Let $\pi\colon \C^n\to \C^d$ be linear projection such that $\pi^{-1}(0)\cap C(X,0)=\{0\}$. Therefore there are an open ball $B\subset \C^n$ centred at $0$ and a complex analytic set $\sigma\subset B':=\pi(B\cap X)$ (resp. $\tilde\sigma\subset\C^d$) such that $\pi\colon (B\cap X)\setminus\pi^{-1}(\sigma)\to B'\setminus \sigma$ (resp. $\pi\colon C(X,0)\to \C^d$) is a cover mapping with $m(X,0)$ (resp. $m(C(X,0),0)$) sheets. Let $v\in \mathbb{S}^{2d-1}\setminus C(\sigma,0)$ be such that for each $w\in \pi|_{C(X,0)}^{-1}(v)$ we have that $(\frac{w}{\|w\|},0)\in Smp(\partial X')$. Then, there exist $\eta,\varepsilon>0$ such that the set $C=\{w\in B'\cap B_{\varepsilon}(0); \|tv-w\|<t\eta$ for some $t\in(0,\varepsilon]\}$ does not intersect $\sigma$. Therefore, there are $m=m(X,0)$ different arcs  $\gamma_1,...,\gamma_m\colon (0,\varepsilon)\to X\cap B$ such that $\pi\circ \gamma_j=\gamma$, where $\gamma\colon (0,\varepsilon)\to B'$ is given by $\gamma(t)=tv$.
% 
% For each $j\in \{1,...,r\}$, let $\{v_{j,1},...,v_{j,m_j}\}=\pi|_{X_j}^{-1}(v)$, where $m_j=m(X_j,0)$.
% 
% 
% \end{proof}

Next result follows from Theorem\ref{mult:thm:inv_multiplicities} and Proposition \ref{mult:prop:kurdyka-raby}.

\begin{theorem}\label{mult:thm:reduction_to_homogeneous}
Let $X\subset \C^n$ and $Y\subset \C^m$ be two complex analytic sets with $\dim X=\dim Y=d$, $0\in X$ and $0\in Y$. Let $X_1,...,X_r$ (resp. $Y_1,...,Y_s$) be the irreducible components\index{components!irreducible} of $C(X,0)$ (resp. $C(Y,0)$).
If there exists a bi-Lipschitz 
homeomorphism\index{homeomorphism!bi-Lipschitz} $\varphi\colon (X,0)\to (Y,0)$, then $r=s$ and there exist a bijection $\sigma\colon \{1,...,r\}\to \{1,...,s\}$ and a bi-Lipschitz homeomorphism $d\varphi\colon C(X,0)\to C(Y,0)$ with $d\varphi(0)=0$ and such that $d\varphi(X_i)=Y_{\sigma(i)}$ and $k_X(X_i)=k_Y(Y_{\sigma(i)})$ for all $i\in \{1,...,r\}$. Additionally, if $m(X_i,0)=m(Y_{\sigma(i)},0)$ for all $i\in \{1,...,r\}$, then $m(X,0)=m(Y,0)$.
\end{theorem}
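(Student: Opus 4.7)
The plan is to assemble three earlier results: the bi-Lipschitz invariance of tangent cones (Theorem \ref{tg:thm:equivcone}), the bi-Lipschitz invariance of the relative multiplicities (Theorem \ref{mult:thm:inv_multiplicities}), and the additivity formula of Proposition \ref{mult:prop:kurdyka-raby} expressing $m(X,0)$ as a weighted sum along the irreducible components of $C(X,0)$.

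First I would apply Theorem \ref{tg:thm:equivcone} to the given bi-Lipschitz homeomorphism $\varphi\colon(X,0)\to(Y,0)$ to obtain a bi-Lipschitz homeomorphism $d\varphi\colon C(X,0)\to C(Y,0)$ with $d\varphi(0)=0$. Because $C(X,0)$ and $C(Y,0)$ are complex analytic sets (Remark \ref{remark-tangent-cone}) and because a bi-Lipschitz homeomorphism between complex analytic sets maps regular points to regular points and singular points to singular points (Exercise \ref{reg:cor:Lip_regularity}), $d\varphi$ permutes the irreducible components of the source and target. Thus, for each $i\in\{1,\dots,r\}$ there is a unique index $\sigma(i)$ with $d\varphi(X_i)=Y_{\sigma(i)}$; in particular $r=s$ and $\sigma\colon\{1,\dots,r\}\to\{1,\dots,s\}$ is a bijection.

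Second, I would invoke Theorem \ref{mult:thm:inv_multiplicities} to conclude that $k_X(X_i)=k_Y(Y_{\sigma(i)})$ for every $i$. One must verify that the bijection appearing in that statement can be taken to coincide with the $\sigma$ constructed above, but this is transparent from the proof of Theorem \ref{mult:thm:inv_multiplicities}, since its permutation of components is produced via the very same tangent map $d\varphi$ delivered by Theorem \ref{tg:thm:equivcone}. At this point the first half of the conclusion is established.

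Finally, assuming additionally $m(X_i,0)=m(Y_{\sigma(i)},0)$ for every $i$, Proposition \ref{mult:prop:kurdyka-raby} applied to both $X$ and $Y$ yields
$$m(X,0)=\sum_{i=1}^r k_X(X_i)\,m(X_i,0)=\sum_{i=1}^r k_Y(Y_{\sigma(i)})\,m(Y_{\sigma(i)},0)=\sum_{j=1}^s k_Y(Y_j)\,m(Y_j,0)=m(Y,0),$$
which closes the argument. No serious obstacle is foreseen: all the substantial analytic content has been dispatched in Theorems \ref{tg:thm:equivcone} and \ref{mult:thm:inv_multiplicities}, and the present theorem is essentially their bookkeeping synthesis with the Kurdyka–Raby type additivity formula. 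The only point deserving care is the compatibility of the two permutations of components produced by the invariance theorems, which is handled by noting that both are induced by the same derivative map $d\varphi$.
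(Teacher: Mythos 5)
Your proof is correct and follows exactly the route the paper indicates: the paper itself only remarks that the theorem ``follows from Theorem \ref{mult:thm:inv_multiplicities} and Proposition \ref{mult:prop:kurdyka-raby}'' (leaving details to Exercise form), and your assembly of Theorem \ref{tg:thm:equivcone}, Theorem \ref{mult:thm:inv_multiplicities} and the additivity formula, together with the observation that the permutation of components is the one induced by the single tangent map $d\varphi$, is precisely that intended argument. Your extra justification that $d\varphi$ permutes irreducible components (via preservation of $Reg$ and $Sing$ and the fact that irreducible components are closures of connected components of the regular locus) is a welcome detail the paper glosses over.
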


As a direct consequence of the  above theorem, we obtain that to solve Question $AL(d)$ (resp. $AL(n,d)$), it is enough to solve it only for irreducible homogeneous algebraic sets.

\begin{corollary}\label{mult:cor:reduction_to_homogeneous}
Question $AL(d)$ (resp. $AL(n,d)$) has a positive answer if and only if it has a positive answer for irreducible homogeneous algebraic sets.
\end{corollary}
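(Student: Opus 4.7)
The plan is to apply Theorem \ref{mult:thm:reduction_to_homogeneous} essentially verbatim, after observing that the irreducible components of the tangent cone of a complex analytic set are themselves irreducible homogeneous algebraic sets. The only nontrivial direction is the converse ($\Leftarrow$): the forward direction is immediate, since irreducible homogeneous algebraic sets form a subclass of the complex analytic sets to which Questions AL($d$) and AL($n,d$) apply.

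For the converse in the AL($d$) case, I would start with a bi-Lipschitz homeomorphism $\varphi\colon(X,0)\to(Y,0)$ between pure $d$-dimensional complex analytic germs and invoke Theorem \ref{mult:thm:reduction_to_homogeneous}. This gives the equality $r=s$ of the number of irreducible components of the respective tangent cones, a bijection $\sigma\colon\{1,\dots,r\}\to\{1,\dots,r\}$, and a bi-Lipschitz homeomorphism $d\varphi\colon C(X,0)\to C(Y,0)$ with $d\varphi(X_i)=Y_{\sigma(i)}$ for each $i$. By Remark \ref{remark-tangent-cone}, $C(X,0)$ and $C(Y,0)$ are complex algebraic cones, so each irreducible component $X_i$ and each $Y_{\sigma(i)}$ is an irreducible homogeneous algebraic set. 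The restriction of $d\varphi$ to $X_i$ is therefore a bi-Lipschitz homeomorphism $(X_i,0)\to(Y_{\sigma(i)},0)$ between irreducible homogeneous algebraic sets of the same dimension. By the hypothesis (AL($d$) for such sets), we conclude $m(X_i,0)=m(Y_{\sigma(i)},0)$ for every $i$. The final clause of Theorem \ref{mult:thm:reduction_to_homogeneous} then yields $m(X,0)=m(Y,0)$, which is exactly what AL($d$) demands.

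For the AL($n,d$) case, the extra input needed is that the map $d\varphi$ between tangent cones arises as the restriction of a \emph{global} bi-Lipschitz homeomorphism of the ambient space. This is already built into the construction in the proof of Theorem \ref{tg:thm:equivcone}: when the original $\varphi$ is defined on all of $\C^n$, the derivative $d\varphi$ is likewise defined on all of $\C^n$ as a global bi-Lipschitz homeomorphism with $d\varphi(C(X,0))=C(Y,0)$ and $d\varphi(X_i)=Y_{\sigma(i)}$. Thus a bi-Lipschitz triple $(\C^n,X,0)\to(\C^n,Y,0)$ produces a bi-Lipschitz triple $(\C^n,X_i,0)\to(\C^n,Y_{\sigma(i)},0)$ between the irreducible homogeneous algebraic components, and the AL($n,d$) hypothesis applied componentwise again forces $m(X_i,0)=m(Y_{\sigma(i)},0)$, which via Theorem \ref{mult:thm:reduction_to_homogeneous} gives $m(X,0)=m(Y,0)$.

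There is no real obstacle: the entire content has been packaged into the preceding theorem, and the proof amounts to recognizing that irreducible components of complex analytic tangent cones are irreducible homogeneous algebraic varieties, so that the reduction statement is a plain syllogism. The only subtlety worth being careful about is the AL($n,d$) case, where one must verify that the tangent map inherits the property of being an ambient bi-Lipschitz homeomorphism rather than merely an intrinsic one; this is not a difficulty but a bookkeeping item, since it is precisely what the construction of $d\varphi$ in Theorem \ref{tg:thm:equivcone} guarantees.
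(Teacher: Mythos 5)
Your proof is correct and follows exactly the route the paper intends: the corollary is stated there as a direct consequence of Theorem \ref{mult:thm:reduction_to_homogeneous}, and your argument is precisely the natural unpacking of that reduction, including the correct observation that the forward direction is trivial and the careful note that in the AL($n,d$) case the tangent map $d\varphi$ is ambient by construction in Theorem \ref{tg:thm:equivcone}.
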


\subsection{Exercises}\label{sec:exercises_relative_multiplicities}

\begin{exercise}
Prove Proposition \ref{mult:prop:kurdyka-raby}.
\end{exercise}

\begin{exercise}
 Prove Theorem \ref{mult:thm:reduction_to_homogeneous}.
\end{exercise}

% \begin{exercise}\label{mult:exer:inv_reduceness}
% Let $X\subset \C^n$ and $Y\subset \C^m$ be two complex analytic sets with $p=\dim X=\dim Y$, $0\in X$ and $0\in Y$. If there exists a bi-Lipschitz homeomorphism $\varphi\colon(X,0)\to (Y,0)$, then $\mathcal{O}_n/\mathcal{I}_*(X)$ is a reduced $\C$-algebra if and only if $\mathcal{O}_n/\mathcal{I}_*(Y)$ is a reduced $\C$-algebra as well.
% \end{exercise}

% \input{sec_complex_curves}
\section{Bi-Lipschitz invariance of the multiplicity}

This section is devoted to present a complete answer to the Question AL($d$) on the bi-Lipschitz invariance of the multiplicity. We also present some partial results of other authors concerning the questions AL($d$) and AL($n,d$). We start by addressing the case of  complex singularities of dimension 1.

\subsection{Multiplicity of curves}\label{subsec:mult_curves}

\begin{theorem}\label{mult:thm:mult_inv_curves}
Let $X\subset \C^{n}$ and $Y\subset \C^{m}$ be two complex analytic curves. If $\varphi\colon(X,0)\to (Y,0)$ is a bi-Lipschitz homeomorphism\index{homeomorphism!bi-Lipschitz}, then $m(X,0)=m(Y,0).$
\end{theorem}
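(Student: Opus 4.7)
The plan is to reduce this statement to a direct application of the machinery already developed in Sections 3 and 5, exploiting the low dimension ($d=1$) which forces the irreducible components of the tangent cone to be linear.

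First, I would invoke Theorem \ref{mult:thm:reduction_to_homogeneous}. The bi-Lipschitz homeomorphism $\varphi\colon (X,0)\to (Y,0)$ produces a bi-Lipschitz homeomorphism $d\varphi\colon C(X,0)\to C(Y,0)$ between the tangent cones, together with a bijection $\sigma$ between the irreducible components of $C(X,0)$ and those of $C(Y,0)$ such that $d\varphi(X_i)=Y_{\sigma(i)}$ and
\[
k_X(X_i)=k_Y(Y_{\sigma(i)}) \qquad \forall\, i\in\{1,\dots,r\}.
\]
Since $\dim X=\dim Y=1$, the tangent cones $C(X,0)$ and $C(Y,0)$ are $1$-dimensional complex algebraic cones (Remark \ref{remark-tangent-cone}); hence each irreducible component $X_i$ of $C(X,0)$ is a $1$-dimensional irreducible complex cone in $\C^n$, that is, a complex line through the origin. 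Likewise, each $Y_{\sigma(i)}$ is a complex line through $0\in\C^m$.

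Next, I would observe that a complex line through the origin is smooth at $0$, so by Exercise \ref{mult:exer:mult_one_smooth} (or the fact that $m(\C^k,0)=1$) we have
\[
m(X_i,0)=1=m(Y_{\sigma(i)},0), \qquad \forall\, i\in\{1,\dots,r\}.
\]
Therefore the additional hypothesis in Theorem \ref{mult:thm:reduction_to_homogeneous} is automatically fulfilled, and the conclusion $m(X,0)=m(Y,0)$ follows. Alternatively, applying Proposition \ref{mult:prop:kurdyka-raby} directly,
\[
m(X,0)=\sum_{i=1}^r k_X(X_i)\,m(X_i,0)=\sum_{i=1}^r k_X(X_i)=\sum_{i=1}^r k_Y(Y_{\sigma(i)})=m(Y,0).
\]

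There is essentially no serious obstacle here: the real content has already been paid for by Theorem \ref{tg:thm:equivcone} (bi-Lipschitz invariance of the tangent cone), Theorem \ref{mult:thm:inv_multiplicities} (bi-Lipschitz invariance of the relative multiplicities) and Proposition \ref{mult:prop:kurdyka-raby} (the Kurdyka--Raby type decomposition of multiplicity along tangential components). The only point that needs to be highlighted is the dimensional rigidity: in dimension $1$, the irreducible components of the tangent cone are lines and hence have multiplicity $1$, so that the relative multiplicities alone determine the multiplicity. This rigidity is exactly what fails in higher dimensions and motivates the counterexamples alluded to in the introduction for $d\ge 3$.
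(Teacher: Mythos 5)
Your proposal is correct and follows essentially the same route as the paper's own proof: both reduce via Theorem \ref{mult:thm:reduction_to_homogeneous} to the tangent cones, observe that in dimension $1$ the irreducible components are complex lines of multiplicity $1$, and conclude that the (matched) relative multiplicities alone determine $m(X,0)=m(Y,0)$. Nothing is missing.
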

\begin{proof}
Let us consider $X\subset \C^{n}$ and $Y\subset \C^{m}$ be two complex analytic curves; $0\in X$ and $0\in Y$. Then, we know that the tangent cones\index{tangent cones} $C(X,0)$ and $C(Y,0)$ are union of complex lines\index{tangent cones!complex lines} through the origin, let us say:
$$ C(X,0)=\bigcup_{i=1}^{r}L^{X}_i \ \mbox{and} \ C(Y,0)=\bigcup_{j=1}^{s} L^{Y}_j .$$
Since $\varphi\colon(X,0)\to (Y,0)$ is a bi-Lipschitz homeomorphism, it follows, by Theorem \ref{mult:thm:reduction_to_homogeneous}, that $r=s$ and there exist a bijection $\sigma\colon \{1,...,r\}\to \{1,...,s\}$ and a bi-Lipschitz homeomorphism $d\varphi\colon C(X,0)\to C(Y,0)$ with $d\varphi(0)=0$ and such that $d\varphi(L^{X}_i)=L^{Y}_{\sigma(i)}$ and $k_X(L^{X}_i)=k_Y(L^{Y}_{\sigma(i)})$ for all $i\in \{1,...,r\}$. Then, since $m(L^{Y}_j,0)=1$ and $m(L^{X}_i,0)=1$ $\forall i,j$, it follows from Theorem \ref{mult:thm:reduction_to_homogeneous} that $m(X,0)=m(Y,0)$.
\end{proof}

\subsection{Multiplicity of 2-dimensional analytic hypersurfaces}\label{subsec:mult_surfaces}

In this subsection, we bring a positive answer to the  Question AL(3,2). Notice that such a result is a consequence of the positive answer of Question AL(2) which the proof we sketch in the end of this section. Since our proof of Question AL(2) we sketch here relies on a non-trivial topological result that we do not prove here, we decide to keep this subsection with a proof of the positive answer for AL(3,2) which is quite complete and self-contained.

Let $f:\C^n \to \C$ be a homogeneous polynomial with ${\rm deg}{f}=d$. We recall that the map $\phi\colon\mathbb{S}^{2n-1}\setminus f^{-1}(0)\to \mathbb{S}^1$ given by $\phi(z)=\frac{f(z)}{|f(z)|}$ is a locally trivial fibration\index{fibration} (see \cite{Milnor:1968}, \S 4). Notice that, $\psi:\C^n\setminus f^{-1}(0)\to \C\setminus \{0\}$ defined by $\psi(z)=f(z)$ is a locally trivial fibration such that its fibres\index{fibres} are diffeomorphic\index{diffeomorphic} to the fibres of $\phi$.  Moreover, we can choose as geometric monodromy\index{monodromy!geometric} the homeomorphism $h_f:F_f\to F_f$ given by $h_f(z)=e^{\frac{2\pi i}{d}}\cdot z$, where $F_f:=f^{-1}(1)$ is the (global) Milnor fibre\index{fibre!Milnor} of $f$ (see \cite{Milnor:1968}, \S 9).

It follows from Theorem 3.3 and Remark 3.4 in \cite{Le:1973} the following:

\begin{proposition}\label{mult:prop:top_inv_milnor_fiber}
Let $f,g \subset \C^{n+1}\to \C$ be two irreducible homogeneous polynomials\index{polynomial!irreducible!homogeneous}. 
% Let $h_f\colon F_f\to F_f$ and $h_g\colon F_g\to F_g$ the monodromies of $f$ and $g$ at $0$, respectively. 
If $\varphi\colon(\C^{n+1},V(f),0)\to (\C^{n+1},V(g),0)$ is a homeomorphism, then the induced maps in homology of the monodromies\index{monodromy} of $f$ and $g$ at $0$ are conjugated. In particular, the monodromies of $f$ and $g$ at $0$ have the same Lefschetz number\index{number!Lefschetz} and $\chi(F_f)=\chi(F_g)$.
\end{proposition}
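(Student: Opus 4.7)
The plan is to reduce the statement to L\^e's topological invariance of the monodromy for hypersurface singularities. The reduction rests on two facts: first, for a homogeneous polynomial the global Milnor fibration coincides up to fibre homotopy equivalence with the local Milnor fibration at the origin; second, the embedded topological type at $0$ determines the homology monodromy up to conjugation.

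I would first exploit homogeneity. Since $f(tz)=t^{d}f(z)$, the scaling $z\mapsto tz$ trivializes the global fibration $f\colon \C^{n+1}\setminus V(f)\to \C^{*}$ up to a rescaling of the base. Restricting the base to a small punctured disc and the source to a Milnor ball around $0$ yields, through the standard conic/radial deformation retraction, an equivalence between the global fibration and the local Milnor fibration of $f$ at $0$; under this equivalence the geometric monodromy $h_{f}(z)=e^{2\pi i/d}z$ described before the proposition coincides with the local geometric monodromy at $0$. The analogous statement holds for $g$, so it suffices to prove that the \emph{local} homology monodromies at $0$ are conjugate.

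Next I would apply Theorem 3.3 and Remark 3.4 of \cite{Le:1973} to the germ of $\varphi$ at $0$. After restricting to small neighbourhoods we obtain a homeomorphism of triples $(U,V(f)\cap U,0)\to (W,V(g)\cap W,0)$, so $V(f)$ and $V(g)$ share the same embedded topological type at $0$. L\^e's theorem then supplies an isomorphism $\Phi\colon H_{*}(F_{f})\to H_{*}(F_{g})$ satisfying $\Phi\circ (h_{f})_{*}=(h_{g})_{*}\circ \Phi$, which is the first claim of the proposition. Conjugate endomorphisms on graded vector spaces have equal traces in every degree, so
\[
L(h_{f})=\sum_{i}(-1)^{i}\operatorname{tr}\bigl((h_{f})_{*}|_{H_{i}(F_{f})}\bigr)=\sum_{i}(-1)^{i}\operatorname{tr}\bigl((h_{g})_{*}|_{H_{i}(F_{g})}\bigr)=L(h_{g}),
\]
and the existence of $\Phi$ forces $\dim H_{i}(F_{f})=\dim H_{i}(F_{g})$ for every $i$, whence $\chi(F_{f})=\chi(F_{g})$.

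The main obstacle is not computational but conceptual, and it is entirely absorbed by the citation: transporting a Milnor fibration across a merely continuous homeomorphism $\varphi$ is delicate, since the fibration is a priori an analytic object. L\^e's argument handles this via Whitney stratifications adapted to $V(f)$ and $V(g)$ and a generic linear pencil transverse to the strata, producing a topological model of the fibration whose homology monodromy depends only on the embedded topological type. Our task reduces to checking that the homogeneous setting, together with the global--local identification from the first step, fits his hypotheses, which it does without further obstruction.
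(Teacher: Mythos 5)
Your proposal is correct and follows essentially the same route as the paper, which derives the proposition directly from Theorem 3.3 and Remark 3.4 of L\^e's paper \cite{Le:1973} after identifying the global Milnor fibration of a homogeneous polynomial with the local one at the origin (the identification already set up via \cite{Milnor:1968} just before the statement). Your additional details on the conic structure and on extracting the Lefschetz number and Euler characteristic from the conjugacy are exactly the intended (and omitted) bookkeeping.
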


\begin{definition}
Let $f\colon (\C^{n+1},0)\to (\C,0)$ be a complex analytic function with 
$$\dim  Sing{V(f)}=1 \mbox{ and } Sing{V(f)}=C_1\cup...\cup C_r.$$ 
Then $b_i(f)$ denotes the $i$-th Betti number\index{number!Betti} of the Milnor fibre of $f$ at the origin, $\mu_j'(f)$ is the Milnor number\index{number!Milnor} of a generic hyperplane slice\index{slice} of $f$ at $x_j\in C_j\setminus \{0\}$ sufficiently close to the origin and $\mu'(f)=\sum\limits_{i=1}^r\mu_i'(f)$.
\end{definition}

Let us remind some results on Milnor number of a generic hyperplane slice.

In \cite{Massey:2017}, $\mu'(f)$ is denoted by $\sigma_f$ and it was proved in \cite[Theorem 4.1]{Massey:2017} that it is an embedded topological invariant\index{invariant!embedded topological}. We state that result here.
\begin{proposition}\label{mult:prop:top_inv_transv_milnor_number}
Let $f,g\colon (\C^{n+1},0)\to (\C,0)$ be complex analytic functions with $1$-dimensional singular sets\index{singular set!$1$-dimensional}.
If there is a homeomorphism\index{homeomorphism} $\varphi\colon(\C^{n+1},V(f),0)\to (\C^{n+1},V(g),0)$, then $\mu'(f)=\mu'(g)$.
\end{proposition}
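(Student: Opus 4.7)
The plan is to reduce the invariance of $\mu'$ to the embedded topological invariance of the Milnor number of an isolated hypersurface singularity, applied to a generic transverse slice along each $1$-dimensional irreducible component of $\mathrm{Sing}(V(f))$. I would proceed in three steps: match the $1$-dimensional strata of the singular loci via $\varphi$, localize the pair along each such stratum to obtain a homeomorphism of transverse slice pairs, and then invoke topological invariance of $\mu$ to conclude equality of the transverse Milnor numbers component by component.

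For the first step, I would use that smoothness of a complex analytic hypersurface is detected by the embedded local topology of the pair (a complex hypersurface is smooth at $x$ exactly when $(\C^{n+1},V(f))$ is a manifold pair at $x$ of the expected dimensions). Hence $\varphi$ sends $\mathrm{Sing}(V(f))$ homeomorphically onto $\mathrm{Sing}(V(g))$. The $1$-dimensional irreducible component germs at the origin are detected by the connected components of the link of the singular locus, which are topological data, so after reindexing one has $r=s$ and $\varphi(C_j\setminus\{0\})=C'_j\setminus\{0\}$ as germs at $0$.

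For the second step, I would fix a Whitney stratification of $(\C^{n+1},V(f))$ whose $1$-strata along $\mathrm{Sing}(V(f))$ are Zariski-open subsets of the $C_j$; by Thom--Mather first isotopy, near any point $x$ of such a stratum the pair is stratified-homeomorphic to $(\C,0)\times(\C^n,V(f_x),0)$, where $f_x$ is a transverse slice with isolated singularity and $\mu(f_x)=\mu_j'(f)$ on a Zariski-open subset of $C_j$. An analogous decomposition holds on the $g$-side. Choosing $x\in C_j$ generic enough that $x$ and $\varphi(x)$ both lie in their respective Zariski-open subsets and then restricting $\varphi$ to a local transverse slice would give a homeomorphism of pairs $(\C^n, V(f_x))\cong(\C^n, V(g_{\varphi(x)}))$. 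Concluding via topological invariance of $\mu$ for isolated hypersurface singularities, summed over $j$, would give $\mu'(f)=\mu'(g)$.

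The main obstacle is the combination of Steps 2 and 3: a mere homeomorphism of pairs need not respect the stratified product structure along $C_j^{\circ}$, and the embedded topological invariance of $\mu$ for isolated hypersurface singularities in $\C^n$ is itself a delicate and still-open question when $n=3$ (precisely the dimension where L\^e--Ramanujam fails). Massey's strategy likely circumvents both difficulties simultaneously by expressing $\sigma_f=\mu'(f)$ as a global cohomological invariant of the Milnor fibration of $f$ at $0$ --- for instance as a difference of Euler characteristics of the Milnor fiber of $f$ and of its generic hyperplane slice --- which is transparently preserved by any embedded homeomorphism without ever requiring a slicewise compatibility or a transverse topological invariance result.
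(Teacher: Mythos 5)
The paper does not actually prove this proposition: it is quoted directly from Massey (Theorem 4.1 of \cite{Massey:2017}), so there is no internal argument to compare yours against. Judged on its own terms, your proposal is not yet a proof, and you say so yourself: the decisive step is missing. Restricting $\varphi$ to a neighbourhood of a point $x\in C_j\setminus\{0\}$ produces a homeomorphism of the $(n+1)$-dimensional germ pairs $(\C^{n+1},V(f),x)\to(\C^{n+1},V(g),\varphi(x))$, not of the $n$-dimensional transverse slice pairs; a topological equivalence of $(\C,0)\times(\C^{n},V(f_x),0)$ with $(\C,0)\times(\C^{n},V(g_{\varphi(x)}),0)$ gives you no way to cancel the line factor, and nothing forces $\varphi$ to respect the stratified product structure. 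So Step~2 fails as stated. Your worry about Step~3, on the other hand, is misplaced: for an \emph{isolated} hypersurface singularity the embedded topological type of the germ of the pair does determine $\mu$ in every dimension (via the homology of the infinite cyclic cover of the link complement); what is open for $n=3$ is the converse L\^e--Ramanujam direction, that $\mu$-constancy implies topological triviality. Step~1 is fine: by A'Campo and L\^e, a reduced hypersurface germ with the embedded topological type of a smooth one is smooth, so $\varphi$ matches the singular loci and their branches.

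The repair is essentially what you guess in your closing paragraph, except that it can be carried out locally at $x$ rather than globally at $0$, which dissolves the slicing problem. L\^e's invariance theorem (the same result the paper invokes as Proposition \ref{mult:prop:top_inv_milnor_fiber}, which is local and applies at any point of the hypersurface, not only at the origin) says that a homeomorphism of germs of pairs preserves the homology of the Milnor fibre of the defining equation at that point. At a generic point $x\in C_j$ the Milnor fibre $F_{f,x}$ of $f$ at $x$ is homotopy equivalent to the Milnor fibre of the transverse slice, a wedge of $\mu_j'(f)$ spheres of dimension $n-1$; hence $\mu_j'(f)={\rm rank}\, \widetilde H_{n-1}(F_{f,x})$ is read off from the local embedded topology at $x$ without ever producing a homeomorphism of slices. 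The non-generic points of each branch form a finite set, and $\varphi$ is a bijection between the branches, so one may choose $x$ with both $x$ and $\varphi(x)$ generic; summing over branches gives $\mu'(f)=\mu'(g)$. Massey's own proof packages this through L\^e numbers and a non-splitting statement, but in either form the key input your sketch is missing is the topological invariance of the local Milnor fibration at points of the singular curve, not a transverse-slice comparison.
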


\begin{proposition}[Theorem 5.11 in \cite{Randell:1979}]\label{mult:prop:Randell_formula}
Let $f\subset \C^{n+1}\to \C$ be a homogeneous polynomial with degree $d$ and $1$-dimensional singular set\index{singular set!$1$-dimensional}. 
% Let $h_f\colon F_f\to F_f$ and $h_g\colon F_g\to F_g$ the monodromies of $f$ and $g$ at $0$, respectively. 
Then 
$$\chi(F_f)=1+(-1)^n((d-1)^{n+1}- d\mu'(f)).$$
\end{proposition}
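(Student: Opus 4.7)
The plan is to realise $F_f$ as a $d$-fold cyclic covering of $\mathbb{P}^n\setminus V$, where $V\subset\mathbb{P}^n$ is the projectivised hypersurface, and then compute $\chi(V)$ by a smoothing comparison. Since $f$ is homogeneous of degree $d$, the map $\pi\colon F_f=f^{-1}(1)\to\mathbb{P}^n$, $z\mapsto[z]$, lands in $U=\mathbb{P}^n\setminus V$, and the fibre over $[w]\in U$ is the orbit $\{\mu f(w)^{-1/d}w:\mu^d=1\}$, of cardinality $d$. Multiplicativity of the constructible Euler characteristic then gives
$$
\chi(F_f)=d\chi(U)=d(n+1)-d\chi(V),
$$
so it suffices to compute $\chi(V)$.

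The hypothesis that $\mathrm{Sing}\,V(f)\subset\C^{n+1}$ is a union of complex lines through the origin means that $\mathrm{Sing}\,V\subset\mathbb{P}^n$ is a finite set of points $p_1,\dots,p_r$, where $p_i=[C_i]$. Let $V_s$ be a generic smooth degree-$d$ hypersurface, joined to $V$ by a one-parameter family $V_t=V(f+tg)$ with $g$ sufficiently generic, so that for small $t\neq 0$ no new singularities appear near any $p_i$. Smoothing the isolated hypersurface singularity $(V,p_i)$ replaces a contractible conical neighbourhood of $p_i$ in $V$ by the local Milnor fibre $F_i$, which by Milnor's bouquet theorem is homotopy equivalent to $\bigvee^{\mu_i}S^{n-1}$; therefore
$$
\chi(V_s)-\chi(V)=\sum_{i=1}^{r}\bigl(\chi(F_i)-1\bigr)=(-1)^{n-1}\sum_{i=1}^{r}\mu_i.
$$
To identify $\mu_i$ with $\mu'_i(f)$, choose an affine chart of $\mathbb{P}^n$ around $p_i$ given by an affine hyperplane $H_i\subset\C^{n+1}$ transverse to $C_i$ and passing through a chosen $v_i\in C_i\setminus\{0\}$: the restriction $f|_{H_i}$ is then a local defining equation for $V$ near $p_i$, and so $\mu_i$ equals the Milnor number of $f|_{H_i}$ at $v_i$, which by homogeneity of $f$ is precisely $\mu'_i(f)$. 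Hence $\sum_i\mu_i=\mu'(f)$.

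To evaluate $\chi(V_s)$ I calibrate against the Fermat polynomial $g=z_0^d+\cdots+z_n^d$, which is homogeneous of degree $d$ with an \emph{isolated} singularity at the origin and whose projective vanishing locus is smooth. Milnor's classical result for isolated hypersurface singularities gives $\chi(F_g)=1+(-1)^n(d-1)^{n+1}$, and the covering formula of the first step applied to $g$ produces
$$
\chi(V_s)=(n+1)-\frac{1+(-1)^n(d-1)^{n+1}}{d}.
$$
Combining this with $\chi(V)=\chi(V_s)+(-1)^n\mu'(f)$ and substituting into $\chi(F_f)=d(n+1)-d\chi(V)$ yields
$$
\chi(F_f)=1+(-1)^n(d-1)^{n+1}-d(-1)^n\mu'(f)=1+(-1)^n\bigl((d-1)^{n+1}-d\mu'(f)\bigr),
$$
as desired.

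The main obstacle I anticipate is the control of the smoothing in the second step: one must verify that a generic family $V_t$ genuinely smooths each $p_i$ without introducing new singularities nearby, so that the local contribution to $\chi(V_s)-\chi(V)$ is exactly $\chi(F_i)-1$. This requires the versality of small perturbations of isolated hypersurface singularities combined with a Bertini-type genericity argument within the linear system of degree-$d$ hypersurfaces in $\mathbb{P}^n$. A secondary subtlety is the identification $\mu_i=\mu'_i(f)$, which relies on the homogeneity of $f$ to move freely between different nonzero points on each $C_i$.
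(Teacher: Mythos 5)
The paper does not prove this proposition --- it is quoted directly from Randell's Theorem 5.11 --- so there is no internal proof to compare with; judged on its own, your argument is correct and complete in outline. The three ingredients (multiplicativity of $\chi$ under the unbranched $d$-fold covering $F_f\to\mathbb{P}^n\setminus V$ together with $\chi(\mathbb{P}^n\setminus V)=(n+1)-\chi(V)$; the local comparison $\chi(V_s)-\chi(V)=\sum_i(\chi(F_i)-1)$ for a smoothing of the finitely many isolated singular points of $V$; and the calibration of $\chi(V_s)$ against the Fermat hypersurface via $\mu=(d-1)^{n+1}$) assemble correctly, and the bookkeeping checks out (for instance it returns $\chi(F_{xyz})=0=\chi((\C^{*})^{2})$). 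Two refinements would tighten the points you flag as delicate. For $\mu_i=\mu_i'(f)$ no genericity of the hyperplane is actually needed: an affine hyperplane through $v_i$ and the origin would contain the whole line $C_i$, so every affine-chart hyperplane through $v_i$ is automatically transverse to $C_i$, and all such slices compute the same number because it is the Milnor number of the intrinsic germ $(V,p_i)$, independent of the chart; genericity of the point on $C_i$ is then absorbed by homogeneity. For the smoothing, take $g$ of degree $d$ generic with $g(p_i)\neq 0$ for every $i$: the base locus of the pencil $f+tg$ then misses $\mathrm{Sing}\,V$, Bertini makes $V_t$ smooth for small $t\neq 0$, and near each $p_i$ the family is a smoothing of an isolated hypersurface singularity realizing its Milnor fibre --- this is the standard Euler characteristic computation for projective hypersurfaces with isolated singularities (it can be found in Dimca's book, already cited in the paper). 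You also use tacitly that $f$ is reduced; this is forced by $\dim\mathrm{Sing}\,V(f)=1$ once $n\geq 2$, which is the only case the paper needs.
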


% Let $f\colon (\C^n,0)\to (\C,0)$ be the germ of an analytic function at the origin. If we write 
% $$f=f_m+f_{m+1}+\cdots+f_k+\cdots$$ 
% where each $f_k$ is a homogeneous polynomial of degree $k$ and $f_m\neq 0$, we define ${\bf in} (f):=f_m$.

The following result was proved by Sampaio in \cite{Sampaio:2020}.
\begin{theorem}\label{mult:thm:mult_inv_hypersurface}
Let $X,Y \subset \C^{n+1}$ be two complex analytic hypersurfaces with $0\in X\cap Y$.  Assume that each irreducible component $X_i$ of $C(X,0)$ satisfies $\dim Sing{X_i}\leq 1$. If $\varphi\colon(\C^{n+1},X,0)\to (\C^{n+1},Y,0)$ is a bi-Lipschitz homeomorphism\index{homeomorphism!bi-Lipschitz}, then $m(X,0)=m(Y,0).$
\end{theorem}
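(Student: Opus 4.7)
The plan is to reduce, via the bi-Lipschitz invariance of the tangent cone and the formula for multiplicity in terms of relative multiplicities of components, to showing invariance of the degree of an irreducible homogeneous hypersurface of $\C^{n+1}$ (with at most $1$-dimensional singular locus) under an ambient bi-Lipschitz homeomorphism; once there, the topological invariance of the Milnor fibre and of the transversal Milnor number, combined with Randell's formula, should pin down the degree.

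First, I would apply Theorem \ref{tg:thm:equivcone} to the ambient bi-Lipschitz homeomorphism $\varphi$ to obtain an ambient bi-Lipschitz homeomorphism $d\varphi\colon(\C^{n+1},0)\to(\C^{n+1},0)$ with $d\varphi(C(X,0))=C(Y,0)$. Writing $C(X,0)=\bigcup_i X_i$ and $C(Y,0)=\bigcup_j Y_j$, Theorem \ref{mult:thm:reduction_to_homogeneous} provides a bijection $\sigma$ with $d\varphi(X_i)=Y_{\sigma(i)}$ and $k_X(X_i)=k_Y(Y_{\sigma(i)})$, so Proposition \ref{mult:prop:kurdyka-raby} reduces the problem to verifying $m(X_i,0)=m(Y_{\sigma(i)},0)$ for each $i$. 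Since $d\varphi$ is ambient bi-Lipschitz on $\C^{n+1}$, it restricts to an ambient bi-Lipschitz homeomorphism of pairs $(\C^{n+1},X_i,0)\to(\C^{n+1},Y_{\sigma(i)},0)$, and by Exercise \ref{reg:cor:Lip_regularity} the hypothesis $\dim \mathrm{Sing}\,X_i\le 1$ transfers to $Y_{\sigma(i)}$. Writing $X_i=V(f)$ and $Y_{\sigma(i)}=V(g)$ with $f,g$ irreducible homogeneous polynomials of degrees $d_f=m(X_i,0)$ and $d_g=m(Y_{\sigma(i)},0)$, the task becomes showing $d_f=d_g$.

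Next, since an ambient bi-Lipschitz homeomorphism is in particular an ambient topological one, Proposition \ref{mult:prop:top_inv_milnor_fiber} yields $\chi(F_f)=\chi(F_g)$, and Proposition \ref{mult:prop:top_inv_transv_milnor_number} yields $\mu'(f)=\mu'(g)=:\mu'$. By the dimension hypothesis, Randell's formula (Proposition \ref{mult:prop:Randell_formula}) applies to both sides and, after subtracting, produces the diophantine identity
\[
(d_f-1)^{n+1}-d_f\,\mu' \;=\; (d_g-1)^{n+1}-d_g\,\mu',
\]
which factors, using $a^{n+1}-b^{n+1}=(a-b)\sum_{k=0}^{n}a^{k}b^{n-k}$, as
\[
(d_f-d_g)\Bigl[\sum_{k=0}^{n}(d_f-1)^{k}(d_g-1)^{n-k}-\mu'\Bigr]=0.
\]

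The main obstacle will be ruling out the vanishing of the second factor so as to conclude $d_f=d_g$. The case $d_f=1$ is easily disposed of: then $V(f)$ is Lipschitz regular at $0$, so Theorem \ref{reg:thm:Lip_regularity} forces $V(g)$ to be analytically smooth at $0$, and Exercise \ref{mult:exer:mult_one_smooth} gives $d_g=1$. For $d_f,d_g\ge 2$, the strategy is to establish a sharp upper bound $\mu'(f)<\sum_{k=0}^{n}(d_f-1)^{k}(d_g-1)^{n-k}$ by exploiting that the singular scheme of $V(f)$ is cut out by the $n+1$ partial derivatives $\partial f/\partial z_0,\dots,\partial f/\partial z_n$, each homogeneous of degree $d_f-1$, whose Bezout-type intersection degree controls the transversal Milnor numbers at generic points of the irreducible lines of $\mathrm{Sing}\,V(f)$. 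This is the technically delicate step, and it is exactly where the irreducibility of $f$ and the hypothesis $\dim \mathrm{Sing}\,V(f)\le 1$ enter essentially.
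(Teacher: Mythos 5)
Your reduction to comparing the degrees of corresponding irreducible components $V(f)$ and $V(g)$ of the tangent cones is exactly the paper's, and the algebra leading to the factorization $(d_f-d_g)\bigl[\sum_{k=0}^{n}(d_f-1)^{k}(d_g-1)^{n-k}-\mu'\bigr]=0$ is correct. The genuine gap is the final step: you never prove the inequality $\mu'<\sum_{k=0}^{n}(d_f-1)^{k}(d_g-1)^{n-k}$ needed to kill the second factor, and it is not a routine Bezout computation. The partial derivatives of $f$ cut out $\mathrm{Sing}\,V(f)$ in excess dimension, so the naive intersection number $(d_f-1)^{n+1}$ says nothing directly about the transversal Milnor numbers. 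What one actually needs is a bound of the shape $\mu'(f)\le (d_f-1)^{n}$, which (combined with $\mu'(f)=\mu'(g)$) gives $\mu'\le \min(d_f-1,d_g-1)^{n}<\sum_{k}(d_f-1)^{k}(d_g-1)^{n-k}$ once $d_f,d_g\ge 2$. That bound is true, but it amounts to the non-negativity of the zero-dimensional L\^e number of $f$ --- equivalently to $b_{n-1}(F_f)\le \mu'$ together with $b_n(F_f)\ge 0$ --- which is a substantive theorem (Siersma, Massey), not something that falls out of the degrees of the partials. As written, the ``technically delicate step'' you flag is the whole content of the argument, and it is missing.

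The paper sidesteps this by splitting into cases. When $\chi(F_f)\ne 0$ it does not use Randell's formula at all: since $h_f^{k}(z)=e^{2\pi i k/d_f}z$ has no fixed point on $F_f$ for $0<k<d_f$, the Lefschetz numbers satisfy $\Lambda(h_f^{k})=0$ for $0<k<d_f$ while $\Lambda(h_f^{d_f})=\chi(F_f)\ne 0$; as the monodromies of $f$ and $g$ are conjugate in homology, the degree is read off as the first exponent with nonvanishing Lefschetz number. Randell's formula and $\mu'$ enter only in the residual case $\chi(F_f)=\chi(F_g)=0$, where the relation becomes $P(d-1)=0$ for $P(t)=t^{n+1}-\mu' t+(-1)^{n}-\mu'$ and Descartes' rule gives at most one positive root --- a situation requiring no upper bound on $\mu'$ whatsoever. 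To repair your unified Randell-only route you must actually prove $\mu'\le(d-1)^{n}$; otherwise, adopt the Lefschetz-number argument for the case $\chi\ne 0$. (A minor additional point: you should note explicitly that $\dim\mathrm{Sing}\,V(f)=\dim\mathrm{Sing}\,V(g)$, which follows from Exercise \ref{reg:cor:Lip_regularity} and the fact that bi-Lipschitz maps preserve dimension, before invoking Proposition \ref{mult:prop:top_inv_transv_milnor_number} or Randell's formula on both sides.)
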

\begin{proof} 
Let $\widetilde{f},\widetilde{g}\colon(\C^{n+1},0)\to (\C,0)$ be two reduced complex analytic functions such that $X=V(\widetilde{f})$ and $Y=V(\widetilde{g})$.
Let $f_1\cdots f_r$ (resp. $g_1,...,g_s$) be the irreducible factors of the decomposition of ${\bf in} (\widetilde{f})$ (resp. ${\bf in} (\widetilde{g})$) in irreducible polynomials\index{polynomial!irreducible}. Then, $r=s$ and by reordering the indices, if necessary, there exists a bi-Lipschitz homeomorphism\index{homeomorphism!bi-Lipschitz} $\psi=d\varphi\colon(\C^{n+1},0)\to (\C^{n+1},0)$ such that $\psi(V(f_i))=V(g_i)$ for all $i\in \{1,...,r\}$. Fixed $i$, we denote $f=f_i$, $g=g_i$, $d=m(V(f),0)$ and $e=m(V(g),0)$. By Proposition \ref{mult:prop:top_inv_milnor_fiber}, $\chi(F_f)=\chi(F_g)$.

We have two cases to consider:

\noindent Case 1) $\chi(F_f)\neq 0$, $\chi(F_g)\neq 0$ as well.

\begin{claim}\label{claim:mult_char_fiber}
 If $0<k<d$ (respectively $0<k<e$), then $\Lambda(h_f^k)=0$ (respectively $\Lambda(h_g^k)=0$), where $\Lambda(h_f^k)$ (respectively $\Lambda(h_g^k)$) denotes the Lefschetz number\index{number!Lefschetz} of $h_f^k$ (respectively $h_g^k$).
\end{claim}

\bigskip

\begin{proof}%[Proof of the Claim \ref{claim:mult_char_fiber}] 
We start the proof using the  Topological  Cylindric Structure\index{structure!cylindric} at Infinity of Algebraic Sets (see \cite{Dimca:1992}, p. 26, Theorem 6.9) to justify that $F=f^{-1}(1)$ has the same homotopy type of $F_R=F\cap \{x\in \C^n;\, \|x\|\leq R\}$ for $R$ large enough. We have that the geometric monodromy  $h_f\colon F\to F$ given by $h_f(x)=e^{\frac{2\pi i}{d}}x$, restricted to $F_R$, induces a map $h=h_f|_{F_R}\colon F_R\to F_R$. It is clear that $h^k$ does not have a fixed point for $0<k<d$, hence $\Lambda(h^k)=0$. Since $h_f$ is homotopy equivalent to $h$,  it follows that $\Lambda(h_f^k)=0$ for any $0<k<d$. 
\end{proof}

\bigskip

It follows from Proposition \ref{mult:prop:top_inv_milnor_fiber} that $\Lambda(h_f^k)=\Lambda(h_g^k)$ for all $k\in \N$. Since $f$ and $g$ are homogeneous polynomials with degrees $d$ and $e$ respectively, $h_f^d=id \colon  F_f\rightarrow F_f$ and $h_g^e=id \colon  F_g\rightarrow F_g$, we get $\Lambda(h_f^d)=\chi(F_f)\neq 0$ and $\Lambda(h_g^e)=\chi(F_g)\neq 0$. Thus, it follows from previous claim that $d=e$.

\noindent Case 2) $\chi (F_f)=\chi (F_g)=0$.

By the Lipschitz Regularity Theorem\index{Theorem!Lipschitz Regularity} (Theorem \ref{reg:thm:Lip_regularity}), we have that $\psi( Sing{(V(f))})= Sing{(V(g))}$ and, in particular, 
$\dim  Sing{(V(f))}=\dim  Sing{(V(g))}$. Thus, we can suppose that $d,k>1$.  If $\dim  Sing{(V(f))}=0$ then $\chi(F_f)=1+(-1)^n(d-1)^{n+1}=0$ and $\chi(F_g)=1+(-1)^n(k-1)^{n+1}=0$. This implies $d=k=2$. 

Thus, we can assume that $\dim  Sing{(V(f))}\not=0$. In this case, we have $\dim  Sing{V(f)}=\dim  Sing{(V(g))}=1$. Since $\chi (F_f)=\chi (F_g)=0$, by Proposition \ref{mult:prop:Randell_formula}, we have
$$(d-1)^{n+1}- \mu'(f)(d-1)+(-1)^{n}- \mu'(f)=0$$
and
$$(k-1)^{n+1}- \mu'(f)(k-1)+(-1)^{n}- \mu'(g)=0.$$
Thus, we define the polynomial map $P\colon\R\to \R$ by 
$$
P(t)=t^{n+1}- \mu'(f)t+(-1)^{n} - \mu'(f), \quad \forall t\in \R.
$$
Since $\mu'(f)=\mu'(g)$ (see Proposition \ref{mult:prop:top_inv_transv_milnor_number}), then $d-1$ and $k-1$ are positive zeros of $P$.
Since $\mu'(f)=\mu'(g)\geq 1$, by Descartes' Rule\index{Descartes' Rule}, $P$ has at most one positive zero. Thus, $d=k$.
\end{proof}
% 
% \begin{corollary}\label{mult_surface}
% Let $f,g:(\C^3,0)\to (\C,0)$ be two homogeneous complex polynomials. If $\varphi:(\C^3,V(f),0)\to (\C^3,V(g),0)$ is a homeomorphism, then $m(V(f),0)=m(V(g),0)$.
% \end{corollary}

As a consequence, we obtain the result proved by Fernandes and Sampaio in \cite{FernandesS:2016} which says that the multiplicity\index{multiplicity!of surface in $\C^3$} of surface singularities in $\C^3$ is invariant under bi-Lipschitz homeomorphisms.

\begin{corollary}\label{mult_surface_blow}
Let $f,g:(\C^3,0)\to (\C,0)$ be two complex analytic functions. If $\varphi:(\C^3,V(f),0)\to (\C^3,V(g),0)$ is a bi-Lipschitz homeomorphism\index{homeomorphism!bi-Lipschitz}, then $m(V(f),0)=m(V(g),0)$.
\end{corollary}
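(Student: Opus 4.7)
The plan is to derive the corollary as an immediate application of Theorem~\ref{mult:thm:mult_inv_hypersurface} (the ambient dimension $n+1=3$ case), so the entire task reduces to verifying that in this low-dimensional setting the hypothesis on the singular loci of the irreducible components of the tangent cone is automatic. The key structural fact I would exploit is that complex analytic hypersurfaces in $\C^3$ are pure $2$-dimensional, and proper analytic subsets of a pure $2$-dimensional complex analytic set have dimension at most $1$.

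First, I would set up the reduction. We may assume $f$ and $g$ are non-zero (otherwise there is nothing to prove, or the statement is trivial), so $V(f), V(g) \subset \C^3$ are pure $2$-dimensional complex analytic hypersurfaces. By Remark~\ref{remark-tangent-cone}, each of $C(V(f),0)$ and $C(V(g),0)$ is a complex algebraic subset of $\C^3$. Moreover, since $V(f)$ has pure dimension $2$ at $0$, its tangent cone also has pure dimension $2$ (a standard property of tangent cones of equidimensional analytic germs). Consequently, every irreducible component $X_i$ of $C(V(f),0)$ is a pure $2$-dimensional complex algebraic hypersurface in $\C^3$.

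Next, I would record the dimensional bound on the singular locus. For each irreducible component $X_i$ as above, the singular set $\mathrm{Sing}(X_i)$ is a proper closed complex analytic subset of the $2$-dimensional irreducible set $X_i$, so
\[
\dim \mathrm{Sing}(X_i) \leq \dim X_i - 1 = 1.
\]
This is precisely the hypothesis required by Theorem~\ref{mult:thm:mult_inv_hypersurface}. Thus, applying that theorem with $n=2$ to the bi-Lipschitz homeomorphism $\varphi\colon(\C^3,V(f),0)\to(\C^3,V(g),0)$ yields $m(V(f),0) = m(V(g),0)$, finishing the proof.

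There is essentially no hard step: the dimension count is elementary, and all the heavy machinery (invariance of tangent cones, Lipschitz regularity, the Milnor fibre/monodromy arguments, Randell's formula, and Massey's topological invariance of $\mu'$) has already been packaged into Theorem~\ref{mult:thm:mult_inv_hypersurface}. The only point that warrants brief care is confirming pure-dimensionality of the tangent cone, ensuring that the $1$-dimensional-singular-locus hypothesis applies to \emph{every} irreducible component rather than just the top-dimensional ones.
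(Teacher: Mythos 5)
Your proof is correct and follows exactly the route the paper intends: the corollary is stated there as an immediate consequence of Theorem~\ref{mult:thm:mult_inv_hypersurface}, and your dimension count (each irreducible component of the tangent cone is an irreducible $2$-dimensional hypersurface in $\C^3$, so its singular locus has dimension at most $1$) is precisely the verification that makes the hypothesis of that theorem automatic in ambient dimension $3$.
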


\subsection{Invariance of the multiplicity under semi-bi-Lipschitz homeomorphisms on the functions}
In this section, we define some notions of equivalence on germs of functions and we prove some results on invariance of the multiplicity under those equivalences.
\begin{definition}\label{def_weakly_rugose}
We say that two germs of analytic functions $f,g:(\C^n,0)\to (\C^m,0)$ are {\bf semi-bi-Lipschitz equivalent}\index{equivalent!semi-bi-Lipschitz}, if there are constants $C_1,C_2>0$ and a germ of bijection $\varphi:(\C^n,0)\to (\C^n,0)$  such that 
\begin{enumerate}
\item [{\rm (1)}] $\frac{1}{C_1}\|x\|\leq \|\varphi(x)\|\leq C_1\|x\|$, for all small enough  $x\in \C^n$;
\item [{\rm (2)}] $\frac{1}{C_2}\|f(x)\|\leq \|g\circ\varphi(x)\|\leq C_2\|f(x)\|,$ for all small enough $x\in \C^n$.
\end{enumerate}
\end{definition}

\begin{definition}\label{def:inv_delta}
Let $U\subset \C^n$ be an open set such that $0\in U$ and let $f\colon U\to \C$ be an analytic function. Then, for each $r>0$ such that $B_r(0)\subset U$, we define
$$
\delta_{r}(f)=\textstyle{\sup \{\delta ;\,\frac{|f(z)|}{\|z\|^{\delta}} \mbox{ is bounded on } B_r(0)\setminus \{0\}\}.}
$$
\end{definition}

Note that $\delta_r(f)$ does not depend on $r>0$ (see Exercise \ref{mult:exer:no_depend_on_r}). Thus, we define this common number by $\delta(f)$.
\begin{proposition}\label{expoent}
Let $f,g\colon (\C^n,0)\to \C$ be a germ of an analytic function. Then, ${\rm ord}_0(f)=\delta(f)$.
\end{proposition}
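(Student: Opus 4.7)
The plan is to establish the two inequalities $\mathrm{ord}_0(f)\leq \delta(f)$ and $\delta(f)\leq \mathrm{ord}_0(f)$ separately. Throughout, let $k=\mathrm{ord}_0(f)$ and write the Taylor expansion at the origin as $f(z)=f_k(z)+f_{k+1}(z)+\cdots$, where each $f_j$ is a homogeneous polynomial of degree $j$ and $f_k\not\equiv 0$.

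For the inequality $k\leq \delta(f)$, the idea is to exhibit $r>0$ for which $|f(z)|/\|z\|^k$ is bounded on $B_r(0)\setminus\{0\}$. Since each $f_j$ is homogeneous of degree $j$, there exist constants $M_j>0$ with $|f_j(z)|\leq M_j\|z\|^j$. Choosing $r$ smaller than the radius of convergence of the Taylor series, I will bound $|f(z)|\leq \|z\|^k\sum_{j\geq k}M_j\,\|z\|^{j-k}$ and use convergence of the tail series at, say, $\|z\|=r/2$, to conclude $|f(z)|\leq C\|z\|^k$ on $B_{r/2}(0)$ for some constant $C$. Hence the bounded set in the definition of $\delta_{r/2}(f)$ contains $k$, giving $\delta(f)\geq k$.

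For the reverse inequality $\delta(f)\leq k$, the natural idea is to test the ratio $|f(z)|/\|z\|^{\delta}$ along a carefully chosen arc. Since $f_k$ is a nonzero homogeneous polynomial, one can pick $v\in\C^n$ with $f_k(v)\neq 0$; consider the arc $t\mapsto tv$ for small real $t>0$. Along this arc,
\begin{equation*}
f(tv)=t^k f_k(v)+t^{k+1}f_{k+1}(v)+\cdots = t^k\bigl(f_k(v)+O(t)\bigr),
\end{equation*}
so that for any real $\delta>k$,
\begin{equation*}
\frac{|f(tv)|}{\|tv\|^{\delta}}=\frac{|f_k(v)+O(t)|}{\|v\|^{\delta}}\,t^{k-\delta}\longrightarrow +\infty\quad\text{as }t\to 0^{+}.
\end{equation*}
Therefore $|f(z)|/\|z\|^{\delta}$ is unbounded on every $B_r(0)\setminus\{0\}$ whenever $\delta>k$, which forces $\delta(f)\leq k$.

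Combining the two inequalities yields $\delta(f)=k=\mathrm{ord}_0(f)$, as desired. I do not anticipate a genuine obstacle here: the first inequality is a routine majorization of a convergent power series (the only care needed is to cite convergence of the Taylor expansion on some ball to control the tail), and the second inequality reduces to a single one-parameter computation along a line in a direction where the initial form does not vanish.
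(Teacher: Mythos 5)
Your proof is correct and follows essentially the same route as the paper: a power-series majorization gives $|f(z)|\leq C\|z\|^{k}$ near the origin (hence $\delta(f)\geq k$; the paper phrases this as showing every $\delta<k$ yields a bounded ratio, while you show $k$ itself does, which is a slightly sharper but equivalent step), and testing along $t\mapsto tv$ with $f_k(v)\neq 0$ shows the ratio blows up for any $\delta>k$, exactly as in the paper's choice of $v\notin V(f_m)$. No gaps.
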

\begin{proof}
If $\delta>m:={\rm ord}_0(f)$ and $f=f_m+f_{m+1}+...$ with $f_m\not=0$, then we choose $v\not\in V(f_m)$. Thus, $\lim \limits _{t\to 0^+ }\frac{|f(tv)|}{t^{\delta}}=+\infty $. Then, $\delta(f)\leq {\rm ord}_0(f)$.

If $\delta<m$, then $\lim \limits _{z\to 0}\frac{|f(z)|}{\|z\|^{\delta}}=0.$ Thus, there exists $r>0$ such that $\frac{|f(z)|}{\|z\|^{\delta}}\leq 1$, for all $z\in   B_r(0)$. This implies $\delta(f)\geq {\rm ord}_0(f)$.

Therefore, $\delta(f)= {\rm ord}_0(f)$.
\end{proof}

The next result is due to Comte, Milman and Trotman \cite{ComteMT:2002}.
\begin{theorem}\label{weakly_rugose}
Let $f,g\colon (\C^n,0)\to \C$ be two germs of analytic functions. If $f$ and $g$ are semi-bi-Lipschitz equivalent\index{equivalent!semi-bi-Lipschitz}, then ${\rm ord}_0(f)={\rm ord}_0(g)$.
\end{theorem}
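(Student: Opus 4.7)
The plan is to reduce the statement to showing that the invariant $\delta(f)$ defined in Definition \ref{def:inv_delta} is preserved under semi-bi-Lipschitz equivalence, and then to invoke Proposition \ref{expoent} to conclude that $\mathrm{ord}_0(f)=\delta(f)=\delta(g)=\mathrm{ord}_0(g)$.

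The main task is therefore to prove $\delta(f)=\delta(g)$ assuming the two conditions of Definition \ref{def_weakly_rugose}. I would first observe that $\varphi^{-1}$ satisfies the same type of estimates as $\varphi$: inverting condition (1) gives $\tfrac{1}{C_1}\|y\|\le \|\varphi^{-1}(y)\|\le C_1\|y\|$ for $y$ near $0$, and substituting $x=\varphi^{-1}(y)$ in condition (2) yields $\tfrac{1}{C_2}|g(y)|\le |f(\varphi^{-1}(y))|\le C_2|g(y)|$. By this symmetry, it suffices to prove one inequality, say $\delta(f)\le \delta(g)$.

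Fix any $\delta$ with $0<\delta<\delta(f)$. By definition of $\delta(f)$ (and the fact that the collection of admissible exponents is downward closed, since $|f(z)|/\|z\|^\delta = (|f(z)|/\|z\|^{\delta'})\cdot\|z\|^{\delta'-\delta}$ for $\delta'>\delta$), there exist $r>0$ and $M>0$ such that $|f(x)|\le M\|x\|^\delta$ on $B_r(0)\setminus\{0\}$. Choose $r'>0$ small enough that $\varphi^{-1}(B_{r'}(0))\subset B_r(0)$, which is possible by the bi-Lipschitz size estimate on $\varphi^{-1}$. Then, for $y\in B_{r'}(0)\setminus\{0\}$, setting $x=\varphi^{-1}(y)$ and combining conditions (1) and (2) gives
\begin{equation*}
|g(y)|=|g(\varphi(x))|\le C_2 |f(x)|\le C_2 M\|x\|^\delta \le C_2 M\,C_1^\delta\,\|y\|^\delta.
\end{equation*}
Hence $|g(y)|/\|y\|^\delta$ is bounded on $B_{r'}(0)\setminus\{0\}$, so $\delta\le \delta(g)$. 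Taking the supremum over $\delta<\delta(f)$ gives $\delta(f)\le \delta(g)$, and the symmetric argument applied to $\varphi^{-1}$ yields the reverse inequality.

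I do not expect any serious obstacle: the proof rests entirely on the two inequalities in Definition \ref{def_weakly_rugose} plugged directly into the definition of $\delta$, together with Proposition \ref{expoent}. The only point that deserves a careful word is the downward-closedness of the set of admissible exponents, which is immediate from the boundedness of $\|z\|^{\delta'-\delta}$ near $0$.
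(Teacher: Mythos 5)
Your proposal is correct and follows essentially the same route as the paper: both reduce to showing $\delta(f)=\delta(g)$ via Proposition \ref{expoent} and obtain the inequality between the $\delta$'s by plugging conditions (1) and (2) of Definition \ref{def_weakly_rugose} directly into the quotient $|f(x)|/\|x\|^{\delta}$, then conclude by the symmetry of the hypothesis. Your explicit remark on the downward-closedness of the set of admissible exponents is a small point the paper leaves implicit, but it is the same argument.
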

\begin{proof}
By hypothesis, there are open neighbourhoods $U,\widetilde U$ of $0\in \C^n$, constants $C_1,C_2>0$ and a bijection $\varphi\colon U\to \widetilde U$  such that
\begin{enumerate}
\item [{\rm (1)}] $\frac{1}{C_1}\|x\|\leq\|\varphi(x)\|\leq C_1\|x\|$, for all $x\in U$;
\item [{\rm (2)}] $\frac{1}{C_2}\|f(x)\|\leq \|g\circ\varphi(x)\|\leq C_2\|f(x)\|, \quad \forall x\in U.$
\end{enumerate}
Let $\delta <\delta(g)$. Thus, we may take $\widetilde{r}>0$ such that $\frac{|g(z)|}{\|z\|^{\delta}}$ is bounded on $B_{\widetilde{r}}(0)\setminus \{0\}$, $B_{\widetilde{r}}(0)\subset \widetilde{U}$ and $B_{r}(0)\subset U$, where $r=\frac{\widetilde{r}}{C_1}$. In particular, $\varphi(B_{r}(0))\subset B_{\widetilde{r}}(0)$.

Moreover, we have 
\begin{eqnarray*}
\frac{|f(x)|}{\|x\|^{\delta}}& = & \frac{|f(x)|}{\|\varphi(x)\|^{\delta}}\frac{\|\varphi(x)\|^{\delta}}{\|x\|^{\delta}}\\
							 & \leq & C_1C_2\frac{|g(\varphi(x))|}{\|\varphi(x)\|^{\delta}},
\end{eqnarray*}
for all $x\in B_{r}(0)\setminus \{0\}$. Since $\frac{|g(z)|}{\|z\|^{\delta}}$ is bounded on $B_{\widetilde{r}}(0)\setminus \{0\}$, then $\frac{|f(x)|}{\|x\|^{\delta}}$ is bounded on $B_{r}(0)\setminus \{0\}$. This implies 
$$
\textstyle{\{\rho ;\,\frac{|g(z)|}{\|z\|^{\rho}} \mbox{ is bounded on } B_{\widetilde{r}}(0)\setminus \{0\}\}\subset \{s ;\,\frac{|f(x)|}{\|x\|^{s}} \mbox{ is bounded on } B_{r}(0)\setminus \{0\}\},}
$$
Then, we obtain $\delta_{\widetilde{r}}(g)\leq \delta_r(f)$ and, since $\delta_r(f)=\delta(f)$ and $\delta_{\widetilde{r}}(g)=\delta(g)$, we have $\delta(g)\leq \delta(f)$. Therefore, by Proposition \ref{expoent}, ${\rm ord}_0(g)\leq{\rm ord}_0(f)$. Similarly, we obtain ${\rm ord}_0(f)\leq{\rm ord}_0(g)$. Thus, we have the equality ${\rm ord}_0(g)={\rm ord}_0(f)$.
\end{proof}

\begin{definition}
We say that two germs of analytic functions $f,g\colon (\C^n,0)\to \C$ are:
\begin{itemize}
 \item {\bf bi-Lipschitz right equivalent}\index{equivalent!bi-Lipschitz right}, if there is a bi-Lipschitz homeomorphism $\varphi\colon(\C^n,0)\to (\C^n,0)$ such that $f(x)=g\circ\varphi(x),$ for all small enough  $x\in \C^n$;
 \item {\bf bi-Lipschitz right-left equivalent}\index{equivalent!bi-Lipschitz right-left}, if there are bi-Lipschitz homeomorphisms $\varphi\colon(\C^n,0)\to (\C^n,0)$ and $\phi\colon(\C,0)\to (\C,0)$ such that $f(x)=\phi\circ g\circ\varphi(x),$ for all small enough  $x\in \C^n;$
 \item {\bf rugose equivalent}\index{equivalent!rugose}, if there are constants $C_1,C_2>0$ and a germ of bijection $\varphi:(\C^n,0)\to (\C^n,0)$  such that 
\begin{enumerate}
\item [{\rm (1)}] $\frac{1}{C_1}\|x-y\|\leq \|\varphi(x)-\varphi(y)\|\leq C_1\|x-y\|$, for all small enough  $(x,y)\in \C^n\times f^{-1}(0)$;
\item [{\rm (2)}] $\frac{1}{C_2}\|f(x)\|\leq \|g\circ\varphi(x)\|\leq C_2\|f(x)\|,$ for all small enough $x\in \C^n$;
\end{enumerate}
\item {\bf bi-Lipschitz contact equivalent}\index{equivalent!bi-Lipschitz contact}, if there are a constant $C>0$ and a germ of bi-Lipschitz homeomorphism $\varphi\colon(\C^n,0)\to (\C^n,0)$  such that 
$$\frac{1}{C}\|f(x)\|\leq \|g\circ\varphi(x)\|\leq C\|f(x)\|,$$
 for all small enough $x\in \C^n$.
\end{itemize}

\end{definition}

The following result is a
direct consequence from the definitions.
\begin{proposition}\label{implica}
Let $f,g\colon (\C^n,0)\to \C$ be two germs of analytic functions. Let us consider the following statements:
\begin{enumerate}
\item [{\rm (1)}] $f$ and $g$ are bi-Lipschitz right equivalent;
\item [{\rm (2)}] $f$ and $g$ are bi-Lipschitz right-left equivalent;
\item [{\rm (3)}] $f$ and $g$ are bi-Lipschitz contact equivalent;
\item [{\rm (4)}] $f$ and $g$ are rugose equivalent;
\item [{\rm (5)}] $f$ and $g$ are semi-bi-Lipschitz equivalent.
\end{enumerate}
Then, $(1)\Rightarrow (2)\Rightarrow (3)\Rightarrow (4)\Rightarrow (5)$.
\end{proposition}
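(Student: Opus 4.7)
The plan is to prove each of the four implications separately by directly unpacking the defining conditions; no non-trivial machinery should be required because all five notions are built from the same basic ingredients (a germ of a bijection $\varphi$, possibly a bi-Lipschitz estimate on $\varphi$, and a two-sided comparison between $f$ and $g\circ\varphi$ or $\phi\circ g\circ\varphi$). In each step I will keep the same $\varphi$ that witnesses the stronger equivalence and only verify the weaker conditions.

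First, for $(1)\Rightarrow(2)$, given a bi-Lipschitz $\varphi$ with $f=g\circ\varphi$, I simply choose $\phi=\mathrm{id}_{\C}$, which is obviously a bi-Lipschitz germ $(\C,0)\to(\C,0)$, so $f=\phi\circ g\circ\varphi$. For $(2)\Rightarrow(3)$, suppose $f=\phi\circ g\circ\varphi$ with both $\varphi$ and $\phi$ bi-Lipschitz and $\phi(0)=0$. The bi-Lipschitz estimate for $\phi$ at the origin, applied with one argument equal to $0$, yields a constant $C>0$ with $\tfrac{1}{C}|w|\le|\phi(w)|\le C|w|$ for $w$ near $0$. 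Setting $w=g(\varphi(x))$ gives $\tfrac{1}{C}|g\circ\varphi(x)|\le|f(x)|\le C|g\circ\varphi(x)|$, which is exactly the bi-Lipschitz contact condition (keeping the same $\varphi$).

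Next, $(3)\Rightarrow(4)$ is almost tautological: the bi-Lipschitz contact equivalence already requires $\varphi$ to be bi-Lipschitz on a full neighbourhood of $0$ in $\C^n\times\C^n$, which is strictly stronger than requiring the bi-Lipschitz estimate merely on the subset $\C^n\times f^{-1}(0)$, and the second condition on the comparison of $|f|$ and $|g\circ\varphi|$ is identical in the two definitions. Finally, for $(4)\Rightarrow(5)$, note that $0\in f^{-1}(0)$ and that $\varphi(0)=0$ since $\varphi$ is a germ at the origin mapping $0$ to $0$. Applying the rugose bi-Lipschitz inequality to the pair $(x,0)\in\C^n\times f^{-1}(0)$ gives
\[
\tfrac{1}{C_1}\|x\|=\tfrac{1}{C_1}\|x-0\|\le\|\varphi(x)-\varphi(0)\|=\|\varphi(x)\|\le C_1\|x-0\|=C_1\|x\|,
\]
which is precisely condition (1) of the semi-bi-Lipschitz equivalence; condition (2) is verbatim the same in both definitions.

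There is really no obstacle here — the whole chain is bookkeeping, and the only step worth pausing on is $(2)\Rightarrow(3)$, where one has to recognize that a bi-Lipschitz homeomorphism of germs $(\C,0)\to(\C,0)$ automatically gives a two-sided comparison of $|\phi(w)|$ with $|w|$; this is the only place where a property of bi-Lipschitz maps (rather than just an inclusion of definitions) is invoked.
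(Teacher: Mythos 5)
Your proof is correct and follows exactly the route the paper intends: the paper states that Proposition \ref{implica} is a direct consequence of the definitions and leaves the verification as an exercise, and your argument is precisely that definition-unpacking, with the only non-trivial observation (that a bi-Lipschitz germ $\phi\colon(\C,0)\to(\C,0)$ gives $\tfrac{1}{C}|w|\le|\phi(w)|\le C|w|$, needed for $(2)\Rightarrow(3)$) correctly identified and justified.
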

\begin{proof}
The proof is left as an exercise for the reader.
\end{proof}
We finish this subsection by stating some direct consequences of Theorem \ref{weakly_rugose} and Proposition \ref{implica}.
\begin{corollary}[See \cite{RislerT:1997}]
Let $f,g:\C^n\to \C$ be two germs of analytic functions. If $f$ and $g$ are rugose equivalent, then ${\rm ord}_0(f)={\rm ord}_0(g)$.
\end{corollary}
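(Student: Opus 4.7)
The plan is to deduce the corollary directly from the two results established just above it, namely Proposition~\ref{implica} and Theorem~\ref{weakly_rugose}. No new ideas are needed: the corollary is an immediate chaining of the implications already collected.

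First I would invoke Proposition~\ref{implica} in the specific implication $(4)\Rightarrow(5)$: any pair $f,g\colon (\C^n,0)\to\C$ that is rugose equivalent is automatically semi-bi-Lipschitz equivalent. This is essentially tautological once the definitions are unpacked, since the rugose equivalence condition asks for a Lipschitz-type control of $\varphi$ only on the slice $\C^n\times f^{-1}(0)$, whereas the semi-bi-Lipschitz condition requires merely the bi-Lipschitz comparison of norms $\|x\|$ and $\|\varphi(x)\|$ (item (1) of Definition~\ref{def_weakly_rugose}); taking $y=0\in f^{-1}(0)$ and using $\varphi(0)=0$ immediately yields the desired inequality. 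The second condition (comparison of $|f(x)|$ and $|g\circ\varphi(x)|$) is identical in both definitions.

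Second, I would apply Theorem~\ref{weakly_rugose} to the pair $(f,g)$, now known to be semi-bi-Lipschitz equivalent, to conclude $\mathrm{ord}_0(f)=\mathrm{ord}_0(g)$. This finishes the proof, since the statement of the corollary is exactly this equality.

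There is no genuine obstacle here: the only mildly delicate point is the verification that condition (1) of the semi-bi-Lipschitz definition follows from the rugose one, which is why I would write it out carefully using the origin as the special point of $f^{-1}(0)$. Everything else is bookkeeping, and the whole proof can reasonably be condensed into one or two lines of the form ``By Proposition~\ref{implica}, $f$ and $g$ are semi-bi-Lipschitz equivalent, so Theorem~\ref{weakly_rugose} yields $\mathrm{ord}_0(f)=\mathrm{ord}_0(g)$.''
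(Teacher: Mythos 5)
Your proposal is correct and is exactly the route the paper intends: the corollary is stated as a direct consequence of Proposition~\ref{implica} (implication $(4)\Rightarrow(5)$) combined with Theorem~\ref{weakly_rugose}. Your careful remark that condition (1) of the semi-bi-Lipschitz definition follows from the rugose one by taking $y=0\in f^{-1}(0)$ and using $\varphi(0)=0$ is the right (and only) point that needs checking.
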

\begin{corollary}\label{mult:cor:c_lip_inv_mult}
Let $f,g\colon (\C^n,0)\to \C$ be two germs of analytic functions. If $f$ and $g$ are bi-Lipschitz contact equivalent at infinity, then ${\rm ord}_0(f)={\rm ord}_0(g)$.
\end{corollary}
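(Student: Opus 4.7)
The plan is extremely short: this corollary is essentially an immediate chaining of two results already stated in the excerpt. I would first invoke Proposition \ref{implica}, specifically the chain of implications $(3)\Rightarrow (4)\Rightarrow (5)$, to conclude that if $f$ and $g$ are bi-Lipschitz contact equivalent then they are in particular semi-bi-Lipschitz equivalent in the sense of Definition \ref{def_weakly_rugose}. (Strictly speaking, contact equivalence gives a germ of bi-Lipschitz homeomorphism $\varphi$ such that $\frac{1}{C}|f|\le |g\circ\varphi|\le C|f|$; the bi-Lipschitz property of $\varphi$ immediately furnishes both conditions (1) and (2) of Definition \ref{def_weakly_rugose}, so the two implications are purely tautological from the definitions.)

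Once this is established, I would apply Theorem \ref{weakly_rugose} of Comte--Milman--Trotman directly to the pair $(f,g)$ to conclude ${\rm ord}_0(f)={\rm ord}_0(g)$, which is precisely the desired equality.

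There is no genuine obstacle here; the only subtlety to flag is the phrase ``bi-Lipschitz contact equivalent at infinity'' in the statement, which appears to be a slip for ``bi-Lipschitz contact equivalent'' (at $0$), since the whole setting of the subsection concerns germs at the origin. Under that reading the corollary is a one-line consequence of Proposition \ref{implica} and Theorem \ref{weakly_rugose}, and I would present it as such rather than duplicating the argument of Theorem \ref{weakly_rugose}.
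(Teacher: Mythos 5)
Your proposal is correct and matches the paper's treatment exactly: the paper introduces these corollaries as ``direct consequences of Theorem \ref{weakly_rugose} and Proposition \ref{implica}'', which is precisely the chaining $(3)\Rightarrow(4)\Rightarrow(5)$ followed by an application of Theorem \ref{weakly_rugose} that you describe. Your observation that ``at infinity'' is a slip (the relevant definition and the whole subsection concern germs at the origin) is also well taken.
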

\begin{corollary}\label{mult:cor:rl_lip_inv_mult}
Let $f,g\colon (\C^n,0)\to \C$ be two germs of analytic functions. If $f$ and $g$ are bi-Lipschitz right-left equivalent, then ${\rm ord}_0(f)={\rm ord}_0(g)$.
\end{corollary}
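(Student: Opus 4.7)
The plan is to derive the corollary as an immediate consequence of Theorem~\ref{weakly_rugose} via the chain of implications recorded in Proposition~\ref{implica}. Concretely, if $f$ and $g$ are bi-Lipschitz right-left equivalent, then there exist germs of bi-Lipschitz homeomorphisms $\varphi\colon (\C^n,0)\to(\C^n,0)$ and $\phi\colon(\C,0)\to(\C,0)$ with $f=\phi\circ g\circ\varphi$. I would first verify (this is really the only content here) that such a pair produces the two inequalities in Definition~\ref{def_weakly_rugose}. The spatial inequality $\tfrac{1}{C_1}\|x\|\le\|\varphi(x)\|\le C_1\|x\|$ is immediate from $\varphi$ being bi-Lipschitz and $\varphi(0)=0$ (apply the bi-Lipschitz inequality with the pair $(x,0)$). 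For the target-side inequality, note that $\phi$ is a bi-Lipschitz germ with $\phi(0)=0$, so there is $C_2\ge 1$ with $\tfrac{1}{C_2}|w|\le|\phi(w)|\le C_2|w|$ for $w$ near $0$; applying this to $w=g(\varphi(x))$ yields $\tfrac{1}{C_2}|g\circ\varphi(x)|\le|f(x)|\le C_2|g\circ\varphi(x)|$, which is exactly condition (2).

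Having shown that $f$ and $g$ are semi-bi-Lipschitz equivalent, I would invoke Theorem~\ref{weakly_rugose} to conclude ${\rm ord}_0(f)={\rm ord}_0(g)$. There is no real obstacle: the only step requiring any attention is the observation that a bi-Lipschitz germ fixing the origin automatically gives a two-sided bound on the norm of its values, which is what upgrades the composition $\phi\circ g\circ\varphi$ into a semi-bi-Lipschitz equivalence in the sense of Definition~\ref{def_weakly_rugose}. Alternatively, one can just cite the composite implication $(2)\Rightarrow(5)$ from Proposition~\ref{implica} and quote Theorem~\ref{weakly_rugose} in one line.
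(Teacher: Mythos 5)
Your proposal is correct and follows exactly the route the paper intends: the corollary is stated there as a direct consequence of Proposition~\ref{implica} (implication $(2)\Rightarrow(5)$, whose proof the paper leaves as an exercise) combined with Theorem~\ref{weakly_rugose}, and your verification that a bi-Lipschitz germ $\phi$ of $(\C,0)$ gives the two-sided bound $\tfrac{1}{C_2}|w|\le|\phi(w)|\le C_2|w|$ correctly supplies the only detail the paper omits.
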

\begin{corollary}\label{mult:cor:r_lip_inv_mult}
Let $f,g\colon (\C^n,0)\to \C$ be two germs of analytic functions. If $f$ and $g$ are bi-Lipschitz right equivalent, then ${\rm ord}_0(f)={\rm ord}_0(g)$.
\end{corollary}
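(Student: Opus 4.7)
The plan is to obtain this corollary as an immediate consequence of Theorem~\ref{weakly_rugose} via the chain of implications in Proposition~\ref{implica}. Since bi-Lipschitz right equivalence is the strongest notion in the list, it is enough to verify that it implies semi-bi-Lipschitz equivalence in the sense of Definition~\ref{def_weakly_rugose}, after which Theorem~\ref{weakly_rugose} yields ${\rm ord}_0(f)={\rm ord}_0(g)$.

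Concretely, I would unpack the hypothesis: there is a germ of bi-Lipschitz homeomorphism $\varphi\colon(\C^n,0)\to(\C^n,0)$ with $f(x)=g\circ\varphi(x)$ for all $x$ near $0$. Taking absolute values gives $|f(x)|=|g\circ\varphi(x)|$, which is condition (2) of Definition~\ref{def_weakly_rugose} with $C_2=1$. Moreover, applying the bi-Lipschitz inequalities for $\varphi$ to the pair $(x,0)$ and using $\varphi(0)=0$ gives a constant $C_1\geq 1$ such that $\tfrac{1}{C_1}\|x\|\leq\|\varphi(x)\|\leq C_1\|x\|$ near $0$, which is condition (1). So $f$ and $g$ are semi-bi-Lipschitz equivalent.

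The final step is to invoke Theorem~\ref{weakly_rugose}, which gives ${\rm ord}_0(f)={\rm ord}_0(g)$ directly. There is no real obstacle here; the only point that was nontrivial is already handled in the text, namely the identification $\delta(f)={\rm ord}_0(f)$ of Proposition~\ref{expoent} used inside the proof of Theorem~\ref{weakly_rugose}. The corollary is then just the tip of the implication chain (1)$\Rightarrow$(5) of Proposition~\ref{implica} combined with that theorem.
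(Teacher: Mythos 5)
Your proposal is correct and matches the paper's approach: the paper also obtains this corollary as a direct consequence of Theorem~\ref{weakly_rugose} combined with the implication chain $(1)\Rightarrow(5)$ of Proposition~\ref{implica}. Your explicit verification that a right equivalence yields the two conditions of Definition~\ref{def_weakly_rugose} (with $C_2=1$) is exactly the content the paper delegates to that chain.
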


%%%%%%%%%%%%%%%%%%%%%%%%%%%%%%%%%%%%%%%%%%%%%%%%%%%%%%%%%%

\subsection{Lipschitz invariance of multiplicity when the Lipschitz constants are close to 1.}
%%%%%%%%%%%%%%%%%%%%%%%%%%%%%%%%%%%%%%%%%%%%%%%%%%%%%%%%%%

Let us remind the result proved by Draper in \cite{Draper:1969} which says that the multiplicity of complex analytic set $X$ at a point $p$ is the density\index{density} of $X$ at $p$.
\begin{theorem}[Draper \cite{Draper:1969}]\label{mult:thm:Draper_thm}
Let $Z\subset\C^k$ be a pure $d$-dimensional complex analytic subset with $0\in Z$. Then
$$
m(Z,0)=\lim\limits_{r\to0^+}\frac{\mathcal{H}^{2d}(Z\cap \overline{B}_r^{2k}(0))}{\mu_{2d}r^{2d}}
$$
where $\mathcal{H}^{2d}(Z\cap \overline{B}_r^{2k}(0))$ denotes the $2d$-dimensional Hausdorff measure\index{measure!Hausdorff} of $Z\cap \overline{B}_r^{2k}(0)=\{x\in Z;\|x\|\leq r\}$ and $\mu_{2d}$ is the volume\index{volume} of $2d$-dimensional unit ball\index{ball!unit}.
\end{theorem}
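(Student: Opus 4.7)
The strategy is to combine the geometric characterization of multiplicity via a generic linear projection (the proposition in Section~1 identifying $m(Z,0)$ with the topological degree of a generic projection) with the area formula for holomorphic maps, reducing the density computation to a volume computation on the tangent cone. First, I would fix a generic linear projection $\pi\colon\C^k\to\C^d$ with $\ker(\pi)\cap C(Z,0)=\{0\}$. By that proposition, $\pi|_Z$ is a finite branched covering of topological degree $m:=m(Z,0)$ on a neighbourhood of the origin. The transversality of $\ker(\pi)$ with $C(Z,0)$, together with the definition of the tangent cone, furnishes constants $0<c_1\leq c_2$ such that, for all sufficiently small $r>0$,
$$Z\cap\pi^{-1}(\overline{B}_{c_1 r}^{2d})\cap V \ \subset\ Z\cap\overline{B}_r^{2k} \ \subset\ Z\cap\pi^{-1}(\overline{B}_{c_2 r}^{2d})\cap V,$$
where $V$ is a fixed small neighbourhood of $0\in\C^k$.

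Next, I would rescale via the homothety $Z_t:=t^{-1}Z$, so that $\mathcal{H}^{2d}(Z\cap\overline{B}_r^{2k})/(\mu_{2d}r^{2d})=\mathcal{H}^{2d}(Z_r\cap\overline{B}_1^{2k})/\mu_{2d}$. As $r\to 0^+$, the rescaled sets converge in Hausdorff distance to $C(Z,0)\cap\overline{B}_1^{2k}$; on the level of currents of integration, $[Z_r]$ converges weakly to $\sum_i k_Z(C_i)[C_i]$, where $C_1,\dots,C_s$ are the irreducible components of $C(Z,0)$ and $k_Z(C_i)$ are the relative multiplicities of \S\ref{subsection:invariance_rel_mult} (each $C_i$ is counted with multiplicity equal to the number of local sheets of $Z$ collapsing to it). Using lower semicontinuity of mass for positive currents together with the uniform sandwich above as a matching upper bound, I would deduce
$$\lim_{r\to 0^+}\frac{\mathcal{H}^{2d}(Z\cap\overline{B}_r^{2k})}{\mu_{2d}r^{2d}} \,=\, \sum_i k_Z(C_i)\cdot\frac{\mathcal{H}^{2d}(C_i\cap\overline{B}_1^{2k})}{\mu_{2d}}.$$

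It then remains to verify $\mathcal{H}^{2d}(C_i\cap\overline{B}_1^{2k})=m(C_i,0)\cdot\mu_{2d}$ for each irreducible homogeneous $d$-dimensional variety $C_i\subset\C^k$; granting this and invoking Proposition \ref{mult:prop:kurdyka-raby} applied to $(Z,0)$, the right-hand side equals $\sum_i k_Z(C_i)\cdot m(C_i,0)=m(Z,0)$, as desired. For the volume identity, the restriction $\pi|_{C_i}\colon C_i\to\C^d$ is a branched covering of degree $m(C_i,0)$. Since $\pi$ restricted to $\ker(\pi)^{\perp}$ is a complex-linear isometry onto $\C^d$, Wirtinger's theorem — identifying the $2d$-dimensional Hausdorff measure of a complex $d$-submanifold with $\int\omega^d/d!$ for $\omega$ the standard K\"ahler form of $\C^k$ — combined with the holomorphic area formula yields exactly $\mathcal{H}^{2d}(C_i\cap\overline{B}_1^{2k})=m(C_i,0)\cdot\mu_{2d}$.

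The principal obstacle lies in the rescaling step: justifying both the identification of the limit current as $\sum_i k_Z(C_i)[C_i]$ with the correct multiplicities and the convergence of masses (rather than only lower semicontinuity). The uniform branched-covering sandwich controls the masses from above and prevents escape to the boundary sphere, while counting local sheets of $Z$ over $\C^d$ via $\pi$ recovers the relative multiplicities $k_Z(C_i)$; alternatively, one may invoke King's formula relating the current of integration of an analytic cycle to its tangent cycle. Once this density-continuity step is secured, the remainder is Wirtinger combined with the multiplicity bookkeeping of Proposition \ref{mult:prop:kurdyka-raby}.
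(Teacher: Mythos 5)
The paper does not prove this statement at all: it is quoted verbatim from Draper and used as a black box, so there is no internal argument to compare yours against. Your outline follows the standard modern (Thie--King--Lelong) route: rescale, identify the limit of the currents $[Z_r]$ with the tangent cycle $\sum_i k_Z(C_i)[C_i]$, compute the density of each homogeneous component, and close the loop with Proposition \ref{mult:prop:kurdyka-raby}. The architecture is coherent, and you correctly locate the hard point in the convergence-of-mass step (mere lower semicontinuity under weak convergence is not enough; one needs the monotonicity formula for positive closed currents to get existence of the limit, and Thie's identification of the limit cycle, including the fact that each of the $k_Z(C_i)$ local sheets contributes multiplicity exactly $1$). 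One structural warning: Proposition \ref{mult:prop:kurdyka-raby} is left as an exercise in the paper, and its most natural proof is via densities, i.e.\ via the very theorem you are proving; to avoid circularity you must prove it by counting points in a generic fibre of the projection $\pi|_Z$, distributing the $m(Z,0)$ preimages among the cone components.

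The one step that is wrong as justified is the volume identity $\mathcal{H}^{2d}(C_i\cap \overline{B}_1^{2k})=m(C_i,0)\,\mu_{2d}$. You derive it from ``$\pi$ restricted to $\ker(\pi)^{\perp}$ is a complex-linear isometry'' plus the area formula, but the orthogonal projection $\pi|_{C_i}$ is volume-\emph{decreasing}: at a smooth point whose tangent plane $T$ is not equal to $\ker(\pi)^{\perp}$, the real $2d$-Jacobian of $\pi|_T$ is strictly less than $1$. Hence the area formula only yields
$$
m(C_i,0)\cdot\mu_{2d}\;=\;\int_{C_i\cap\pi^{-1}(B_1^{2d})} J(\pi|_{C_i})\, d\mathcal{H}^{2d}\;\leq\;\mathcal{H}^{2d}\bigl(C_i\cap\pi^{-1}(B_1^{2d})\bigr),
$$
an inequality over the wrong region ($C_i\cap\overline{B}_1^{2k}\subset C_i\cap\pi^{-1}(\overline{B}_1^{2d})$), not the claimed equality. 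The correct derivation uses that $C_i$ is a cone: the coarea/cone formula expresses $\mathcal{H}^{2d}(C_i\cap\overline{B}_1^{2k})$ as a dimensional constant times the Fubini--Study volume of the projectivization $\mathbb{P}(C_i)\subset\mathbb{P}^{k-1}$, and Wirtinger's theorem applied \emph{in projective space} gives that this volume equals $\deg\mathbb{P}(C_i)\cdot\mathrm{vol}(\mathbb{P}^{d-1})$, with $\deg\mathbb{P}(C_i)=m(C_i,0)$. This is Lelong's computation, and it is genuinely different from pushing the Euclidean volume form forward by $\pi$. With that step repaired, and Thie's theorem either proved or cited honestly, your reduction is sound.
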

By using the above equality, we obtain the following:
\begin{proposition}\label{mult:prop:relation_mult_lip_constants}
Let $X\subset \C^n$ and $Y\subset \C^m$ be two germs at $0$ of complex analytic sets with $\dim X=\dim Y=d$. If there are constants $C_1,C_2>0$ and a bi-Lipschitz homeomorphism\index{homeomorphism!bi-Lipschitz} $\varphi\colon (X,0)\to (Y,0)$ such that 
$$
\frac{1}{C_1}\|x-y\|\leq \|\varphi(x)-\varphi(y)\|\leq C_2\|x-y\|, \quad \forall x,y \in X
$$
then 
$$
\frac{1}{(C_1C_2)^{2d}}m(X,0)\leq m(Y,0)\leq (C_1C_2)^{2d}m(X,0).
$$
\end{proposition}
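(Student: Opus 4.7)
The plan is to combine Draper's density formula (Theorem \ref{mult:thm:Draper_thm}) with the standard fact that a $C$-Lipschitz map distorts $2d$-dimensional Hausdorff measure by at most a factor of $C^{2d}$. The multiplicity on each side is expressed as a density at $0$, and the bi-Lipschitz homeomorphism $\varphi$ gives us two-sided control both on how far points move and on how volumes change.

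First I would record the pointwise consequence of $\varphi(0)=0$: from the hypothesis, $\|\varphi^{-1}(y)\|\leq C_1\|y\|$ for every $y\in Y$ close to $0$. Hence
$$
\varphi^{-1}\bigl(Y\cap \overline{B}_r^{2m}(0)\bigr)\subset X\cap \overline{B}_{C_1 r}^{2n}(0)
$$
for all sufficiently small $r>0$. Then, because $\varphi$ is $C_2$-Lipschitz, applying the basic estimate $\mathcal{H}^{2d}(\varphi(S))\leq C_2^{2d}\mathcal{H}^{2d}(S)$ to the set $S=\varphi^{-1}(Y\cap \overline{B}_r^{2m}(0))$ gives
$$
\mathcal{H}^{2d}\!\bigl(Y\cap \overline{B}_r^{2m}(0)\bigr)\leq C_2^{2d}\,\mathcal{H}^{2d}\!\bigl(X\cap \overline{B}_{C_1 r}^{2n}(0)\bigr).
$$

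Next I would divide both sides by $\mu_{2d} r^{2d}$, rewrite the right-hand side so that the ball of radius $C_1 r$ appears correctly normalised, and then let $r\to 0^+$ and invoke Theorem \ref{mult:thm:Draper_thm}. This yields
$$
m(Y,0)\leq C_2^{2d}\,C_1^{2d}\lim_{r\to 0^+}\frac{\mathcal{H}^{2d}\!\bigl(X\cap \overline{B}_{C_1 r}^{2n}(0)\bigr)}{\mu_{2d}(C_1 r)^{2d}} = (C_1 C_2)^{2d}\,m(X,0),
$$
giving the upper bound. The lower bound follows by exchanging the roles of $X$ and $Y$: the map $\varphi^{-1}\colon(Y,0)\to(X,0)$ satisfies the same type of inequality with the constants $C_1$ and $C_2$ interchanged, so the symmetric argument gives $m(X,0)\leq (C_1 C_2)^{2d}\,m(Y,0)$, i.e., $\frac{1}{(C_1C_2)^{2d}}m(X,0)\leq m(Y,0)$.

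The only nontrivial point in the argument is the Hausdorff-measure estimate under a Lipschitz map; this is standard (it follows directly from the definition of $\mathcal{H}^{2d}$ via coverings of shrinking diameter, since a $C$-Lipschitz map multiplies diameters by at most $C$). Everything else is essentially bookkeeping with $\varphi(0)=0$ and a rescaling of the variable $r$ inside Draper's limit. I do not foresee a serious obstacle; the care required is mainly in tracking the constants so that the factor $(C_1 C_2)^{2d}$ comes out correctly rather than something asymmetric like $C_1^{2d}$ or $C_2^{2d}$ alone.
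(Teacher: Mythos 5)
Your proof is correct and follows exactly the route the paper intends (the text introduces Draper's density formula, Theorem \ref{mult:thm:Draper_thm}, precisely so that this proposition follows from it, the proof itself being left as an exercise): the inclusion $\varphi^{-1}(Y\cap \overline{B}_r)\subset X\cap \overline{B}_{C_1r}$, the standard bound $\mathcal{H}^{2d}(\varphi(S))\leq C_2^{2d}\mathcal{H}^{2d}(S)$ for a $C_2$-Lipschitz map, the renormalisation of the radius inside the limit, and the symmetric argument for $\varphi^{-1}$ (with $C_1$ and $C_2$ interchanged, so the same constant $(C_1C_2)^{2d}$ appears) are all handled correctly. No gaps.
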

\begin{proof}
The proof is left as an exercise for the reader.
\end{proof}

As a consequence of the above proposition, we obtain a result proved by Comte in \cite[Theorem 1]{Comte:1998}.
\begin{theorem}\label{mult:thm:const_close_one}
Let $X\subset \C^n$ and $Y\subset \C^m$ be two germs at $0$ of complex analytic sets with $\dim X=\dim Y=d$ and $M=\max \{m(X,0),m(Y,0)\}$. If there are constants $C_1,C_2>0$ and a bi-Lipschitz homeomorphism\index{homeomorphism!bi-Lipschitz} $\varphi\colon (X,0)\to (Y,0)$ such that 
$$
\frac{1}{C_1}\|x-y\|\leq \|\varphi(x)-\varphi(y)\|\leq C_2\|x-y\|, \quad \forall x,y \in X
$$
and $(C_1C_2)^{2d}\leq 1+\frac{1}{M}$, then $m(X,0)=m(Y,0).$
\end{theorem}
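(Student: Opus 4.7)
The plan is to reduce the theorem to a short integer-arithmetic argument sitting on top of Proposition \ref{mult:prop:relation_mult_lip_constants}. First, I would apply that proposition directly to the bi-Lipschitz homeomorphism $\varphi$, obtaining the two-sided bound
\[
\frac{1}{(C_1C_2)^{2d}}\, m(X,0) \;\leq\; m(Y,0) \;\leq\; (C_1C_2)^{2d}\, m(X,0).
\]
The hypothesis $(C_1C_2)^{2d} \leq 1 + \tfrac{1}{M}$ then yields
\[
m(Y,0) \;\leq\; \Bigl(1+\tfrac{1}{M}\Bigr) m(X,0) \qquad \text{and} \qquad m(X,0) \;\leq\; \Bigl(1+\tfrac{1}{M}\Bigr) m(Y,0).
\]

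Next, I would exploit the fact that $m(X,0)$ and $m(Y,0)$ are positive integers. Assume, without loss of generality, that $a := m(X,0) \leq m(Y,0) =: b$, so that $M = b$. The second displayed inequality above becomes
\[
b^2 \;\leq\; (b+1)\, a,
\]
i.e., $a \geq b(b-a)$. If it were the case that $a < b$, then $b - a \geq 1$ because $a,b \in \Z_{>0}$, forcing $a \geq b$, a contradiction. Hence $a = b$, which is exactly the desired conclusion $m(X,0) = m(Y,0)$. The symmetric case $m(Y,0) \leq m(X,0)$ is identical after swapping the roles of $X$ and $Y$ (which is legitimate since $\varphi^{-1}$ satisfies the same Lipschitz bounds with $C_1$ and $C_2$ interchanged, so $M$ and $(C_1C_2)^{2d}$ are unchanged).

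There is really no substantial obstacle: all the geometric work is contained in Proposition \ref{mult:prop:relation_mult_lip_constants}, which in turn rests on Draper's density formula (Theorem \ref{mult:thm:Draper_thm}). The only subtlety is the bookkeeping with $M = \max\{m(X,0), m(Y,0)\}$: one must notice that the hypothesis on $(C_1C_2)^{2d}$ is strong enough precisely because $M$ bounds the larger of the two multiplicities, which is exactly what allows the discrete-gap argument to close. No auxiliary tools beyond elementary integer inequalities are needed once the density-based multiplicative estimate is in hand.
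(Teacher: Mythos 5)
Your proposal is correct and follows exactly the route the paper intends: it derives the two-sided density bound from Proposition \ref{mult:prop:relation_mult_lip_constants} and closes the gap with the integrality of the multiplicities, which is all the paper itself does (it states the theorem as an immediate consequence of that proposition). The only blemish is a labelling slip: the inequality $b^2\leq (b+1)a$ comes from $m(Y,0)\leq (1+\tfrac{1}{M})\,m(X,0)$, i.e.\ the \emph{first} of your two displayed inequalities, not the second.
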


In fact, we obtain a slight better result than Theorem \ref{mult:thm:const_close_one}.

\begin{theorem}\label{mult:thm:better_const_close_one}
Let $X\subset \C^n$ and $Y\subset \C^m$ be two germs at $0$ of complex analytic sets with $\dim X=\dim Y=d$. Let $X_1,\dots,X_r$ and $Y_1,\dots,Y_s$ be the irreducible components\index{components!irreducible} of the tangent cones\index{tangent cones} $C(X,0)$ and $C(Y,0)$, respectively and let $M=\max \{m(X_1,0),...,m(X_r,0),m(Y_1,0),...,m(Y_s,0)\}$. If there are constants\index{constants} $C_1,C_2>0$ and a bi-Lipschitz homeomorphism\index{homeomorphism!bi-Lipschitz} $\varphi\colon (X,0)\to (Y,0)$ such that 
$$
\frac{1}{C_1}\|x-y\|\leq \|\varphi(x)-\varphi(y)\|\leq C_2\|x-y\|, \quad \forall x,y \in X
$$
and $(C_1C_2)^{2d}\leq 1+\frac{1}{M}$, then $m(X,0)=m(Y,0).$
\end{theorem}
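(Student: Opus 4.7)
The plan is to combine the reduction of multiplicity to tangent cones (Theorem \ref{mult:thm:reduction_to_homogeneous}) with the volume-based estimate from Proposition \ref{mult:prop:relation_mult_lip_constants}, applied separately to each irreducible component of the tangent cone. First, I would invoke Theorem \ref{mult:thm:reduction_to_homogeneous}, using the crucial fact (built into the construction in Theorem \ref{tg:thm:equivcone}) that the induced tangent map inherits exactly the same bi-Lipschitz constants $C_1, C_2$ as $\varphi$. This provides $r=s$, a bijection $\sigma\colon\{1,\dots,r\}\to\{1,\dots,r\}$, and a bi-Lipschitz homeomorphism $d\varphi\colon C(X,0)\to C(Y,0)$ satisfying $d\varphi(X_i)=Y_{\sigma(i)}$, $k_X(X_i)=k_Y(Y_{\sigma(i)})$, with the same constants $C_1, C_2$.

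For each $i$, the restriction $d\varphi|_{X_i}\colon X_i\to Y_{\sigma(i)}$ is a bi-Lipschitz homeomorphism between two pure $d$-dimensional complex analytic sets (the irreducible components of the $d$-dimensional tangent cones) with constants $C_1, C_2$. Proposition \ref{mult:prop:relation_mult_lip_constants} then yields
$$\frac{1}{(C_1C_2)^{2d}}\, m(X_i,0) \le m(Y_{\sigma(i)},0) \le (C_1C_2)^{2d}\, m(X_i,0).$$
Using the hypothesis $(C_1C_2)^{2d} \le 1 + \tfrac{1}{M}$ together with the integrality of multiplicities, I would show $m(X_i,0) = m(Y_{\sigma(i)},0)$ by contradiction. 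If $m(Y_{\sigma(i)},0) \ge m(X_i,0)+1$, then
$$m(X_i,0) + 1 \;\le\; m(Y_{\sigma(i)},0) \;\le\; \left(1 + \tfrac{1}{M}\right) m(X_i,0) \;=\; m(X_i,0) + \tfrac{m(X_i,0)}{M},$$
which forces $m(X_i,0) \ge M$. Combined with $m(X_i,0) \le M$ by the definition of $M$, this gives $m(X_i,0) = M$ and hence $m(Y_{\sigma(i)},0) \ge M+1$, contradicting $m(Y_{\sigma(i)},0) \le M$. The reverse inequality is symmetric.

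The conclusion then follows at once from Proposition \ref{mult:prop:kurdyka-raby}:
$$m(X,0) = \sum_{i=1}^r k_X(X_i)\, m(X_i,0) = \sum_{i=1}^r k_Y(Y_{\sigma(i)})\, m(Y_{\sigma(i)},0) = m(Y,0).$$

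The main subtlety is the preservation of the Lipschitz constants when passing from $\varphi$ to $d\varphi$, and in particular the observation that $d\varphi$ restricts to each irreducible component $X_i$ of the tangent cone with the same constants; this is precisely the combined content of Theorems \ref{tg:thm:equivcone} and \ref{mult:thm:reduction_to_homogeneous}. The improvement over Theorem \ref{mult:thm:const_close_one} is that $M$ may be taken as the maximum of the multiplicities of the \emph{components of the tangent cones}, which is generally strictly smaller than $\max\{m(X,0), m(Y,0)\}$ since $m(X,0) = \sum_i k_X(X_i)\, m(X_i,0) \ge \max_i m(X_i,0)$.
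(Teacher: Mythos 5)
Your proposal is correct and follows essentially the same route as the paper: both reduce to the irreducible components of the tangent cones via Theorems \ref{tg:thm:equivcone} and \ref{mult:thm:inv_multiplicities}, apply the density estimate of Proposition \ref{mult:prop:relation_mult_lip_constants} componentwise with the same constants $C_1,C_2$, and conclude with Proposition \ref{mult:prop:kurdyka-raby}. The only cosmetic difference is that the paper cites Theorem \ref{mult:thm:const_close_one} for the componentwise step (valid since $M\geq\max\{m(X_j,0),m(Y_j,0)\}$ for each $j$), whereas you unpack the integrality argument explicitly.
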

\begin{proof}
By the bi-Lipschitz invariance of the tangent cones\index{tangent cones!bi-Lipschitz invariance} (see Theorem \ref{tg:thm:equivcone}), there is a global bi-Lipschitz homeomorphism\index{homeomorphism!global bi-Lipschitz} $d\varphi\colon C(X,0)\to C(Y,0)$ such that $d\varphi(0)=0$ and
$\frac{1}{C_1}\|v-w\|\leq \|d\varphi(v)-d\varphi(w)\|\leq C_2\|v-w\|, \quad \forall v,w \in C(X,0).$

By Theorem \ref{mult:thm:inv_multiplicities}, $r=s$ and, up to a re-ordering of indices, $k_X(X_j)=k_Y(Y_j)$ and $Y_j=d\varphi(X_j)$, $\forall \ j$. Moreover, by Proposition \ref{mult:prop:kurdyka-raby}, we obtain
$$m(X,0)=\sum\limits_{j=1}^r k_X(X_j)\cdot m(X_j,0)$$
and 
$$m(Y,0)=\sum\limits_{j=1}^r k_Y(Y_j)\cdot m(Y_j,0).$$
Since $X_j$ and $Y_j$ are homogeneous algebraic sets\index{algebraic sets!homogeneous}, we have ${\rm deg}(X_j)=m(X_j,0)$ and ${\rm deg}(Y_j)=m(Y_j,0)$. By Theorem \ref{mult:thm:const_close_one}, $m(X_j,0)=m(Y_j,0)$ for all $j$. Therefore,
$m(X,0)=m(Y,0).$
\end{proof}

\subsection{Question AL(2) and final comments}

Let us recall the Question AL($d$).

\begin{enumerate}[leftmargin=0pt]
	\item[]{\bf Question AL($d$)} Let $X\subset \C^n$ and $Y\subset \C^m$ be two complex analytic sets with $\dim X=\dim Y=d$, $0\in X$ and $0\in Y$. 
	If there exists a bi-Lipschitz 
	homeomorphism\index{homeomorphism!bi-Lipschitz} $\varphi\colon (X,0)\to (Y,0)$, then is $m(X,0)=m(Y,0)$?
\end{enumerate}

We finish this section bringing a complete answer to this question. Next result is a positive answer to that. Its proof was published in \cite{BobadillaFS}. It is valuable to mention that Neumann and Pichon  also got a positive answer to question AL(2) with the additional hypothesis that the considered surface singularities are normal (see \cite{NeumannP:2012}).
 
\begin{theorem}\label{thm:multiplicity_surfaces}
Let $X\subset \C^{N+1}$ and $Y\subset \C^{M+1}$ be two complex analytic surfaces. 
If $(X,0)$ and $(Y,0)$ are bi-Lipschitz homeomorphic\index{homeomorphic!bi-Lipschitz}, then $m(X,0)=m(Y,0)$.
\end{theorem}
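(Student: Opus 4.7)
The plan is to combine the reduction theorem of Section 5 with a deep structure result on normal complex surface singularities, namely the Neumann-Pichon bi-Lipschitz invariance of multiplicity. The scheme is: reduce to irreducible homogeneous algebraic surfaces, pass to normalizations, apply Neumann-Pichon.

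First I would invoke Corollary 5.3 to reduce Question AL($2$) to the case of irreducible homogeneous algebraic surfaces. Thus, it suffices to show that whenever $X, Y \subset \C^n$ are irreducible homogeneous algebraic surfaces (ambient dimension may be assumed common after Theorem 5.2) and $\varphi\colon (X,0) \to (Y,0)$ is a germ of bi-Lipschitz homeomorphism, one has $m(X,0) = m(Y,0)$. In this homogeneous setting, $m(X,0)$ coincides with the projective degree of $X$, so the target is an equality of degrees. Note also that the trivial case when one of the multiplicities is $1$ follows immediately from Corollary \ref{reg:cor2:Lip_regularity}.

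Next I would pass to the normalizations $n_X\colon \widetilde X \to X$ and $n_Y\colon \widetilde Y \to Y$. These are finite birational morphisms and $\widetilde X, \widetilde Y$ are normal analytic surfaces. The crucial technical step is to lift $\varphi$ to a bi-Lipschitz homeomorphism $\widetilde\varphi\colon (\widetilde X, n_X^{-1}(0)) \to (\widetilde Y, n_Y^{-1}(0))$ with respect to the outer metric coming from a local analytic embedding of the normalization. Granting this lift, one applies the theorem of Neumann and Pichon \cite{NeumannP:2012}, which gives bi-Lipschitz invariance of multiplicity for normal surface singularities, at each pair of corresponding preimages. Combining this with the standard formula expressing $m(X,0)$ as the sum of multiplicities of the germs $(\widetilde X, p)$ over $p \in n_X^{-1}(0)$ (and analogously for $Y$), together with the bijection between preimages furnished by $\widetilde\varphi$, yields $m(X,0) = m(Y,0)$.

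The main obstacle is the lifting step: constructing a bi-Lipschitz lift of $\varphi$ to the normalizations. The normalization map itself is typically not bi-Lipschitz with respect to the outer metric, since it can identify distinct analytic arcs in $\widetilde X$ into a single arc in $X$ (e.g.\ at pinch-point or cross-normal-crossing type loci). To carry out the lift one would combine the bi-Lipschitz invariance of the tangent cone (Theorem \ref{tg:thm:equivcone}) and of the relative multiplicities (Theorem \ref{mult:thm:inv_multiplicities}) with a fine analysis of the inner geometry along the non-normal locus of $X$ and $Y$, via the pancake decomposition (Lemma \ref{lne:lem:lne_decomp}) or equivalently via Mostowski's Lipschitz stratifications, in order to separate the local branches of $X$ at each germ and match them with the local branches of $Y$. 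A secondary issue is to confirm that the outer metric structure (and not merely the inner one) is transferred correctly under the lift, so that the outer-bi-Lipschitz hypothesis of Neumann-Pichon is genuinely met.
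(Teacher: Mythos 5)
Your opening move --- reducing via Theorem \ref{mult:thm:reduction_to_homogeneous} and Corollary \ref{mult:cor:reduction_to_homogeneous} to irreducible homogeneous algebraic surfaces --- is exactly the paper's first step. But the second half of your argument has a genuine gap, and one of its ingredients is false as stated. The ``standard formula'' $m(X,0)=\sum_{p\in n_X^{-1}(0)} m(\widetilde X,p)$ does not hold for surfaces. What the projection formula for the finite birational map $n_X$ actually gives is $m(X,0)=\sum_{p\in n_X^{-1}(0)} e\bigl(\mathfrak{m}_{X,0}\mathcal{O}_{\widetilde X,p}\bigr)$, and the multiplicity of the pulled-back ideal can strictly exceed $m(\widetilde X,p)$ whenever $\mathfrak{m}_{X,0}\mathcal{O}_{\widetilde X,p}$ is not a reduction of $\mathfrak{m}_{\widetilde X,p}$; this never happens for curves (which is why the analogous normalization argument works in dimension $1$) but does happen for surfaces. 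The Whitney umbrella $x^2=y^2z$ illustrates the phenomenon: it has multiplicity $2$ at the origin, while its normalization is smooth with a single point over $0$, so your formula would output $1$. Hence even granting the lift of $\varphi$ to the normalizations and applying Neumann--Pichon there, you would only control the multiplicities of the normalized germs, which is not the quantity you need. On top of this, the lifting step you flag as the ``main obstacle'' is not merely technical: the normalization map is generally bi-Lipschitz only for the inner metric, not the outer one, so an outer bi-Lipschitz homeomorphism of $(X,0)$ onto $(Y,0)$ does not obviously induce one between the normalized germs, whereas the Neumann--Pichon invariance you invoke is an outer-metric statement. None of the tools you list (Theorem \ref{tg:thm:equivcone}, Theorem \ref{mult:thm:inv_multiplicities}, the pancake decomposition) produces such a lift.

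The paper closes the argument by an entirely different and much shorter route: by Proposition \ref{mult:prop:homogeneous_surfaces} (from \cite{BobadillaFS}), for a $2$-dimensional irreducible homogeneous algebraic set $S$ the torsion subgroup of $H^2(S\setminus 0,\Z)$ is $\Z/m\Z$ with $m=m(S,0)$, so in the irreducible homogeneous case the multiplicity is already a \emph{topological} invariant of the germ; a bi-Lipschitz homeomorphism is in particular a homeomorphism, and the theorem follows. If you wish to pursue a normalization-based proof, you would have to replace your multiplicity formula by the integral-closure-sensitive one above and then show that the data $e\bigl(\mathfrak{m}_{X,0}\mathcal{O}_{\widetilde X,p}\bigr)$ is itself a Lipschitz invariant, which is essentially the hard content that \cite{BobadillaFS} and \cite{NeumannP:2012} supply by other means.
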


In view of what we have already proved, mainly Theorem \ref{mult:thm:reduction_to_homogeneous} which says that it is enough to addresses Question AL($d$) for irreducible homogeneous singularities, we have that the above theorem is an immediate consequence of the following result proved in \cite{BobadillaFS}.

\begin{proposition}\label{mult:prop:homogeneous_surfaces}
	Let $S\subset\C^n$ be a 2-dimensional homogeneous and irreducible algebraic set{algebraic sets!irreducible}. Then the multiplicity $m(S,0)=m$ is given by the following: the torsion part of $H^2(S\setminus 0,\Z)$ is isomorphic to $\Z /m\Z$. In particular, if $S,S'$ are 2-dimensional homogeneous and irreducible algebraic sets such that $(S,0)$ and $(S',0)$ are homeomorphic, then $m(S,0)=m(S',0)$. 
\end{proposition}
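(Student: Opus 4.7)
The plan is to realize $S\setminus\{0\}$ (up to homotopy) as the total space of a principal $S^1$-bundle over the projective curve $C:=\mathbb{P}(S)\subset\mathbb{P}^{n-1}$, and then to read off the torsion of $H^{2}$ from the associated Gysin sequence. Since $S$ is a cone, the radial map $(z,t)\mapsto z\cdot\|z\|^{-t}$ deformation-retracts $S\setminus\{0\}$ onto the link $L:=S\cap\mathbb{S}^{2n-1}$. The scalar $S^{1}$-action on $L$ is free (if $\lambda z=z$ with $\lambda\in S^{1}$ and $z\in L$, then $(\lambda-1)z=0$, forcing $\lambda=1$) and its orbit space is exactly $C$, so $L\to C$ is a topological principal $S^{1}$-bundle and $H^{2}(S\setminus\{0\};\Z)\cong H^{2}(L;\Z)$.

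Next I would compute $H^{*}(C;\Z)$ via the normalization $\nu\colon\tilde C\to C$. Irreducibility of $S$ implies that $\tilde C$ is a smooth connected compact Riemann surface of some genus $g$, and topologically $\nu$ identifies the finite set $\nu^{-1}(p)$ to a point over each $p\in\mathrm{Sing}(C)$. Setting $\delta:=\sum_{p\in\mathrm{Sing}(C)}(\#\nu^{-1}(p)-1)$, a CW-argument (or equivalently the Euler-characteristic computation together with the torsion-freeness of $H_{1}$ of a $2$-dimensional CW complex) yields $H^{0}(C;\Z)\cong\Z$, $H^{1}(C;\Z)\cong\Z^{2g+\delta}$, $H^{2}(C;\Z)\cong\Z$, and $H^{k}(C;\Z)=0$ for $k\geq 3$; the fundamental class on $H_{2}$ is provided by the complex orientation on the smooth locus.

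The Gysin sequence of $L\to C$ then becomes
$$\Z\;\xrightarrow{\,\cup e\,}\;\Z\;\longrightarrow\;H^{2}(L;\Z)\;\longrightarrow\;\Z^{2g+\delta}\;\longrightarrow\;0,$$
where $e\in H^{2}(C;\Z)$ is the Euler class. The crux of the proof is identifying $e$ with $\pm m$, and this is where I expect the main obstacle to lie. To handle it, I would pull the bundle back along $\nu$: over the smooth locus the $\C^{*}$-bundle $S\setminus\{0\}\to C$ is the complement of the zero section in $\mathcal{O}_{C}(-1)$, so $\nu^{*}(L\to C)$ is the unit circle bundle of $\nu^{*}\mathcal{O}_{C}(-1)$, a line bundle on $\tilde C$ of degree $-\deg(C)=-m$; hence its Euler class is $-m\in H^{2}(\tilde C;\Z)\cong\Z$. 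Since $\nu$ is a degree-one map between compact oriented $2$-dimensional pseudomanifolds (orientations coming from the complex structure), $\nu^{*}\colon H^{2}(C;\Z)\to H^{2}(\tilde C;\Z)$ is an isomorphism, so $e$ corresponds to $\pm m$. The short exact sequence $0\to\Z/m\Z\to H^{2}(L;\Z)\to\Z^{2g+\delta}\to 0$ splits because $\Z^{2g+\delta}$ is free, so the torsion subgroup of $H^{2}(S\setminus\{0\};\Z)$ is exactly $\Z/m\Z$.

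For the ``in particular'' clause, a germ homeomorphism $(S,0)\to(S',0)$ restricts to a homeomorphism of punctured neighbourhoods, which by the cone deformation-retractions above are homotopy equivalent to the respective links $L$ and $L'$. Thus $H^{2}(L;\Z)\cong H^{2}(L';\Z)$, and comparing torsion subgroups gives $m(S,0)=m(S',0)$.
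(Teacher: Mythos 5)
The paper does not actually prove this proposition; it is quoted from \cite{BobadillaFS}, and your argument is essentially the one given there: realize $S\setminus\{0\}$ as the complement of the zero section of $\mathcal{O}(-1)|_{C}$ (equivalently the link $L$ as a principal $S^1$-bundle over $C=\mathbb{P}(S)$), compute $H^*(C;\Z)$ for the possibly singular irreducible curve $C$, and read off the torsion $\Z/m\Z$ from the Gysin sequence after identifying the Euler class with $\pm\deg C=\pm m$ via pullback to the normalization. The only point worth tightening is the final paragraph: an arbitrary punctured neighbourhood $U\setminus\{0\}$ need not be homotopy equivalent to the link, so one should instead invoke excision, $H^k(U,U\setminus\{0\})\cong H^k(S,S\setminus\{0\})\cong \tilde H^{k-1}(S\setminus\{0\})$ (using contractibility of the cone $S$), to see that the torsion of $H^2(S\setminus\{0\};\Z)$ is an invariant of the germ homeomorphism type; this is routine and does not affect the correctness of the proof.
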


\begin{proof}[Proof of Theorem \ref{thm:multiplicity_surfaces}] 
	In order to prove this theorem, as we saw  above, by Theorem \ref{mult:thm:reduction_to_homogeneous}, it is enough to assume that $X\subset \C^{N+1}$ and $Y\subset \C^{M+1}$ are 2-dimensional homogeneous and irreducible algebraic sets{algebraic sets!homogeneous}. Since a bi-Lipschitz homeomorphism\index{homeomorphism!bi-Lipschitz} between $(X,0)$ and $(Y,0)$ is also a homeomorphism between them, by Proposition \ref{mult:prop:homogeneous_surfaces}, it follows that $m(X,0)=m(Y,0)$.  
\end{proof}

Finally, we complete the answer of Question AL($d$) by showing that, for dimension $d$ greater then 2, we always have singularities which are bi-Lipschitz homeomorphic with different multiplicities. This result was proved in \cite{BirbrairFSV:2020}.
 
\begin{theorem}\label{_Main_example_CP^1_times_CP^1_Theorem_}
For each $n\geq 3$, there exists a family $\{ Y_i \}_{i\in\N}$  of $n$-dimensional complex algebraic varieties
$Y_i \subset \C^{n_i+1}$ such that:
\begin{itemize}
\item [(a)] for each pair $i\neq j$, the germs at the origin of $Y_i\subset \C^{n_i+1}$ and $Y_j\subset \C^{n_j+1}$ are bi-Lipschitz equivalent\index{equivalent!bi-Lipschitz}, but $(Y_i,0)$ and $(Y_j,0)$ have different multiplicity.
\item [(b)] for each pair $i\neq j$, there are $n$-dimensional complex algebraic varieties $Z_{ij}, \widetilde Z_{ij}\subset \C^{n_i+n_j+2}$ such that $(Z_{ij},0)$ and $(\widetilde Z_{ij},0)$ are ambient bi-Lipschitz equivalent\index{equivalent!ambient bi-Lipschitz}, but  $m(Z_{ij},0)=m(Y_{i},0)$ and $m(\widetilde Z_{ij},0)=m(Y_{j},0)$ and, in particular, they have different multiplicity\index{multiplicity!different}.
\end{itemize}
\end{theorem}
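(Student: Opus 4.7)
The plan is to construct the family from affine cones over different projective embeddings of the \emph{same} abstract compact complex manifold. For each $i\geq 1$, let $V_i\subset \mathbb{CP}^{N_i}$ be the image of the Segre--Veronese embedding of $\mathbb{CP}^1\times \mathbb{CP}^1$ associated to the very ample line bundle $\mathcal{O}(1,i)$; it has $\deg V_i=2i$ and is biholomorphic to the single abstract manifold $V:=\mathbb{CP}^1\times \mathbb{CP}^1$. Let $Y_i\subset \C^{N_i+1}$ be its affine cone, a $3$-dimensional homogeneous irreducible algebraic variety with multiplicity $m(Y_i,0)=\deg V_i=2i$, pairwise distinct as $i$ varies. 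For $n>3$ one simply replaces $Y_i$ by $Y_i\times \C^{n-3}$, which is still irreducible of dimension $n$ and preserves the multiplicity at the origin.

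To prove (a) I would show that the bi-Lipschitz type of $(Y_i,0)$ depends only on the abstract manifold $V$, not on the embedding. First, the affine cone over a smooth complex projective variety is Lipschitz normally embedded, so the outer Euclidean metric on $Y_i$ coincides with its inner metric up to bi-Lipschitz. Second, the inner metric on $Y_i\setminus\{0\}$ is bi-Lipschitz to the warped product metric $dt^2+t^2 g_i$ on $V\times (0,\infty)$, where $g_i$ is the Riemannian metric that $V$ inherits from the Fubini--Study metric of $\mathbb{CP}^{N_i}$ via the $i$-th embedding. Third, any two smooth Riemannian metrics $g_i,g_j$ on the compact manifold $V$ are bi-Lipschitz equivalent, so the identity of $V$ induces a bi-Lipschitz map between the corresponding warped products which extends continuously to the cone point, producing the desired bi-Lipschitz equivalence $(Y_i,0)\to (Y_j,0)$ despite $m(Y_i,0)\ne m(Y_j,0)$.

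For (b) I would place the two cones as complementary linear sections in a common ambient space, $Z_{ij}:=Y_i\times\{0\}$ and $\widetilde Z_{ij}:=\{0\}\times Y_j$, both sitting inside $\C^{N_i+1}\times \C^{N_j+1}$; intrinsicality of multiplicity preserves $m(Z_{ij},0)=m(Y_i,0)$ and $m(\widetilde Z_{ij},0)=m(Y_j,0)$, which still differ. To produce the ambient bi-Lipschitz equivalence I would reuse the swap construction from the proof of Theorem \ref{tg:thm:equivcone}: take the bi-Lipschitz $\phi\colon Y_i\to Y_j$ from (a), extend $\phi$ and $\phi^{-1}$ to global Lipschitz maps $\widetilde\phi,\widetilde\psi$ between the ambient spaces via Lemma \ref{mcshane}, and set
\[
\Phi(x,y):=\bigl(x-\widetilde\psi(y+\widetilde\phi(x)),\; y+\widetilde\phi(x)\bigr).
\]
The same computation as in that earlier proof shows that $\Phi$ is a global bi-Lipschitz self-homeomorphism of $\C^{N_i+1}\times \C^{N_j+1}$ whose restriction sends $Y_i\times\{0\}$ onto $\{0\}\times Y_j$.

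The hard part is the Lipschitz--geometric input behind the first two steps of (a): proving that the affine cone over a smooth complex projective variety is LNE and that its outer metric is bi-Lipschitz to the warped product cone over $(V,g)$. Once these are in hand, the bi-Lipschitz classification of the cones $Y_i$ collapses to the bi-Lipschitz classification of smooth Riemannian metrics on the compact manifold $V$, which is trivial, whereas the multiplicities, computed as the projective degrees $2i$, genuinely separate the $Y_i$; part (b) then follows formally from the swap trick.
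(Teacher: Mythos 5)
Part (b) of your proposal is exactly the paper's argument (the swap trick with Lipschitz extensions from Lemma \ref{mcshane}), and the reduction to affine cones over embeddings of $\C P^1\times\C P^1$ is also the right starting point. The gap is in part (a): the principle you invoke --- that the bi-Lipschitz type of the affine cone over a smooth projective variety depends only on the abstract complex manifold $V$ and not on the embedding --- is false, and the step where it breaks is your warped-product model. The punctured cone $Y_i\setminus\{0\}$ is \emph{not} $V\times(0,\infty)$: it is the total space of a $\C^*$-bundle over $V=\C P^1\times\C P^1$ (real dimension $6$, whereas $V\times(0,\infty)$ has real dimension $5$). The correct link of $Y_i$ is $S_i=Y_i\cap\mathbb{S}^{2n_i+1}$, the unit circle bundle of the dual hyperplane bundle $\mathcal{O}(-1,-i)$ over $V$, and $Y_i$ is the straight cone over $S_i$; the bi-Lipschitz type of the germ is governed by $S_i$, whose Euler class depends on the embedding. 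Different embeddings of the same $V$ generally give non-equivalent, indeed non-homeomorphic, cone germs: the cones over the Segre quadric ($\mathcal{O}(1,1)$) and over its $2$-Veronese ($\mathcal{O}(2,2)$) have links $\mathbb{S}^2\times\mathbb{S}^3$ and a non-simply-connected $\Z/2$-quotient of it; in the curve case, the cone over a line and the cone over a conic are both ``cones over $\C P^1$'' yet are not bi-Lipschitz equivalent by Theorem \ref{reg:thm:Lip_regularity}. So the argument as written would prove far too much.

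What actually has to be established --- and what the paper's sketch supplies by quoting \cite{BirbrairFSV:2020} --- is that for the chosen embeddings the links $S_i$ are pairwise diffeomorphic. A diffeomorphism $h\colon S_i\to S_j$ of compact smooth submanifolds of spheres is bi-Lipschitz for the outer metrics, and its radial extension $ts\mapsto t\,h(s)$ is bi-Lipschitz between the straight cones (use $\|ts-t's'\|^2=(t-t')^2+tt'\|s-s'\|^2$ for unit vectors $s,s'$); this is where the bi-Lipschitz equivalence of the germs really comes from. For $\C P^1\times\C P^1$ embedded by $\mathcal{O}(a,b)$ the link is the circle bundle with Euler class $(a,b)$, and it is diffeomorphic to $\mathbb{S}^2\times\mathbb{S}^3$ precisely when $\gcd(a,b)=1$, by Smale's classification of simply connected spin $5$-manifolds; this is why the paper takes bidegree $(2,p_i)$ with $p_i$ an odd prime, giving degree $4p_i$. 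Your family $\mathcal{O}(1,i)$ happens to have primitive Euler classes, so its links are in fact all $\mathbb{S}^2\times\mathbb{S}^3$ and your construction is salvageable, but this topological input is the heart of the theorem and is entirely absent from your argument.
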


\begin{proof}[Sketch of the proof.]
	Let $\{p_i\}_{i\in\N}$ be the family of odd prime numbers\index{number!odd prime}. For each $i\in\N$,
	let $X_i$ be an embedding of $\C P^1\times\C P^1$ into $\C P^{n_i}$ with degree $4p_i= 2\cdot2\cdot p_i$ (we can do this by using a composition of Segre and Veronese embeddings\index{embeddings!Segre and Veronese}). Let $Y_i\subset\C^{N+1}$ be the respective affine algebraic complex cone associated to $X_i$ and $S_i$ its the respective link at the origin of $\C^{n_i+1}$. More precisely, $S_i=Y_i\cap\mathbb{S}^{2n_i+1}$.
	By construction, $m(Y_i,0)=4p_i$ for all $i\in\N$. On the other hand, it was proved in \cite{BirbrairFSV:2020} that $S_i$ is diffeomorphic\index{diffeomorphic} to $\mathbb{S}^2\times\mathbb{S}^3$ for all $i\in\N$ and, then $(Y_i,0)$ is bi-Lipschitz homeomorphic\index{homeomorphic!bi-Lipschitz} to $(Y_i,0)$ while $m(Y_i,0)\neq m(Y_i,0)$ for all $i\neq j$. Hence, Item (a) is proved.
	
	Concerning to Item (b), let $f_{ij} \colon Y_i \rightarrow  Y_j$ be a bi-Lipschitz homeomorphism such that $f_{ij} (0) = 0$ (we are considering $f_{ij}^{-1}=f_{ji}$). Let $F_{ij} \colon \C^{n_i+1} \rightarrow \C^{n_j+1}$  be a Lipschitz extension\index{Lipschitz extension} of $f_{ij}$  (see Lemma \ref{mcshane}). Let us define $\phi,\psi \colon  \C^{n_i+1} \times \C^{n_j+1}\rightarrow \C^{n_i+1} \times \C^{n_j+1}$ as follows:
	$$\phi(x, y) = (x - F_{ji} (y + F_{ij} (x)), y + F_{ij} (x))$$
	and
	$$\psi(z, w) = (z + F_{ji} (w), w - F_{ij} (z + F_{ji} (w))).$$
	It easy to verify that $\phi$ and $\psi$ are inverse maps\index{maps!inverse} of each other. Since $\phi$ and $\psi$ are composition of Lipschitz maps\index{maps!composition of Lipschitz}, they are also Lipschitz maps. Moreover, if we denote $Z_{ij} = Y_i \times \{0\}$ and
	$\widetilde{Z}_{ij} = \{0\} \times Y_j$, we get $\phi(Z_{ij} ) = \widetilde{Z}_{ij}$ (see \cite{Sampaio:2016}). Therefore, $(Z_{ij} , 0)$
	and $(\widetilde{Z}_{ij} , 0)$ are ambient bi-Lipschitz equivalent\index{equivalent!ambient bi-Lipschitz}, while $m(Z_{ij} , 0) = m(Y_i, 0)$ and
	$m(\widetilde{Z}_{ij} , 0) = m(Y_j, 0)$ are different.
\end{proof}

\subsection{Exercises}\label{sec:exercises_multiplicity}

\begin{exercise}\label{mult:exer:no_depend_on_r}
Prove that $\delta_r(f)$ as in Definition \ref{def:inv_delta} does not depend on $r>0$.
\end{exercise}

\begin{exercise}
Prove Proposition \ref{implica}.
\end{exercise}

\begin{exercise}
Give a direct proof to Corollary \ref{mult:cor:rl_lip_inv_mult}.
\end{exercise}

\begin{exercise}
 Prove Proposition \ref{mult:prop:relation_mult_lip_constants}.
\end{exercise}

% \input{sec_exercises}

%%%%%%% End of tex

%%%%%%% Bibliography
% \include{bibliography}

% \input{bibliography}

% BibTeX users please use
% \bibliographystyle{}
% \bibliography{} 
% Otherwise, see the file referenc.tex to see how to format the references. It is loaded in the following command:
% \input{referenc}

\printindex

\end{document}